\begin{document}

%% \def\framebox#1{\hbox{#1}} %% *** Eliminates frameboxes in case ArXiv has trouble with them

%% \allowdisplaybreaks

%%%%%%%%%%%%%%%%%%%%%%%%%%%%%%%%%%%%%%%%%%%%%%%%%%%%%%%%%%%%%%%%%%%%%%
%% Title and Author Information

\title[The Distribution Relation and Inverse Function Theorem]      
      {The Distribution Relation and Inverse Function Theorem in Arithmetic Geometry}
\date{\today}
\author[Y. Matsuzawa]{Yohsuke Matsuzawa}
\email{matsuzawa@math.brown.edu}
\address{Department of Mathematics, Box 1917
  Brown University, Providence, RI 02912 USA}
\author[J.H. Silverman]{Joseph H. Silverman}
\email{jhs@math.brown.edu}
\address{Department of Mathematics, Box 1917
  Brown University, Providence, RI 02912 USA.
  ORCID: 0000-0003-3887-3248}

\subjclass[2010]{Primary: 11G50; Secondary: 14G40, 37P30, 47J07, 58C15}
\keywords{arithmetic distance function, inverse function theorem, arithmetic distribution relation}
\thanks{The first author's research is supported by a JSPS Overseas
  Research Fellowship.  The second author's research is supported by
  Simons Collaboration Grant \#712332}

%%%%%%%%%%%%%%%%%%%%%%%%%%%%%%%%%%%%%%%%%%%%%%%%%%%%%%%%%%%%%%%%%%%%%%

\newcommand{\YOHSUKE}[1]{{\color{green} $\bigstar$ \textsf{{\bf Yohsuke:} [#1]}}}
\newcommand{\JOE}[1]{{\color{blue} $\bigstar$ \textsf{{\bf Joe:} [#1]}}}

% \allowdisplaybreaks

\hyphenation{ca-non-i-cal semi-abel-ian}

%%%%%%%%%%%%%%%%%%%%%%%%%%%%%%%%%%%%%%%%%%%%%%%%%%%%%%%%%%%%%%%%%%%%%%
% Theorem environments

\newtheorem{theorem}{Theorem}[section]
\newtheorem{lemma}[theorem]{Lemma}
\newtheorem{sublemma}[theorem]{Sublemma}
\newtheorem{conjecture}[theorem]{Conjecture}
\newtheorem{proposition}[theorem]{Proposition}
\newtheorem{corollary}[theorem]{Corollary}
\newtheorem{claim}{Claim}
\newtheorem*{theoremtemplate}{Theorem Template}

\theoremstyle{definition}
% The * surpresses numbering
\newtheorem{definition}[theorem]{Definition}
\newtheorem*{intuition}{Intuition}
\newtheorem{example}[theorem]{Example}
\newtheorem{remark}[theorem]{Remark}
\newtheorem{question}[theorem]{Question}

\theoremstyle{remark}
\newtheorem*{acknowledgement}{Acknowledgements}

%%%%%%%%%%%%%%%%%%%%%%%%%%%%%%%%%%%%%%%%%%%%%%%%%%%%%%%%%%%%%%%%%%%%%%

%%%%%%%% Set Up Environment for Notation %%%%%%%%%%%%%%
% This is currently set to allow quite wide items to be defined
\newenvironment{notation}[0]{%
  \begin{list}%
    {}%
    {\setlength{\itemindent}{0pt}
     \setlength{\labelwidth}{4\parindent}
     \setlength{\labelsep}{\parindent}
     \setlength{\leftmargin}{5\parindent}
     \setlength{\itemsep}{0pt}
     }%
   }%
  {\end{list}}

%%%%%%%% Set Up Environment for Parts in Theorems %%%%%%%%%%%%%%
\newenvironment{parts}[0]{%
  \begin{list}{}%
    {\setlength{\itemindent}{0pt}
     \setlength{\labelwidth}{1.5\parindent}
     \setlength{\labelsep}{.5\parindent}
     \setlength{\leftmargin}{2\parindent}
     \setlength{\itemsep}{0pt}
     }%
   }%
  {\end{list}}
% Use \Part{(a)}, instead of \item[(a)], to ensure upright font
\newcommand{\Part}[1]{\item[\upshape#1]}

%%%%%%%% Set Up Macro for Cases %%%%%%%%%%%%%%
\def\Case#1#2{%
\paragraph{\textbf{\boldmath Case #1: #2.}}\hfil\break\ignorespaces}

%%%%%%%%%%%%%%%%%%
% Greek Alphabet %
%%%%%%%%%%%%%%%%%%
\renewcommand{\a}{\alpha}
\newcommand{\bfalpha}{{\boldsymbol{\alpha}}}
\renewcommand{\b}{\beta}
\newcommand{\bfbeta}{{\boldsymbol{\beta}}}
\newcommand{\g}{\gamma}
\renewcommand{\d}{\delta}
\newcommand{\e}{\epsilon}
\newcommand{\f}{\varphi}
\newcommand{\fhat}{\hat\varphi}
\newcommand{\bfphi}{{\boldsymbol{\f}}}
\renewcommand{\l}{\lambda}
\renewcommand{\k}{\kappa}
\newcommand{\lhat}{\hat\lambda}
\newcommand{\m}{\mu}
\newcommand{\bfmu}{{\boldsymbol{\mu}}}
\renewcommand{\o}{\omega}
\newcommand{\bfpi}{{\boldsymbol{\pi}}}
\renewcommand{\r}{\rho}
\newcommand{\bfrho}{{\boldsymbol{\rho}}}
\newcommand{\rbar}{{\bar\rho}}
\newcommand{\s}{\sigma}
\newcommand{\sbar}{{\bar\sigma}}
\renewcommand{\t}{\tau}
\newcommand{\z}{\zeta}

\newcommand{\D}{\Delta}
\newcommand{\G}{\Gamma}
\newcommand{\F}{\Phi}
\renewcommand{\L}{\Lambda}

%%%%%%%%%%%%%%%%%%%%
% Fraktur Alphabet %
%%%%%%%%%%%%%%%%%%%%
\newcommand{\ga}{{\mathfrak{a}}}
\newcommand{\gb}{{\mathfrak{b}}}
\newcommand{\gI}{{\mathfrak{I}}}
\newcommand{\gM}{{\mathfrak{M}}}
\newcommand{\gn}{{\mathfrak{n}}}
\newcommand{\gp}{{\mathfrak{p}}}
\newcommand{\gP}{{\mathfrak{P}}}
\newcommand{\gq}{{\mathfrak{q}}}

%%%%%%%%%%%%%%%%%%%
% Barred Alphabet %
%%%%%%%%%%%%%%%%%%%
\newcommand{\Abar}{{\bar A}}
\newcommand{\Ebar}{{\bar E}}
\newcommand{\kbar}{{\bar k}}
\newcommand{\Kbar}{{\overline K}}
\newcommand{\Pbar}{{\bar P}}
\newcommand{\Sbar}{{\bar S}}
\newcommand{\Tbar}{{\bar T}}
\newcommand{\gbar}{{\bar\gamma}}
\newcommand{\lbar}{{\bar\lambda}}
\newcommand{\ybar}{{\bar y}}
\newcommand{\phibar}{{\bar\f}}
\newcommand{\nubar}{{\overline\nu}}

%%%%%%%%%%%%%%%%%%%%%%%%%
% Calligraphic Alphabet %
%%%%%%%%%%%%%%%%%%%%%%%%%
\newcommand{\Acal}{{\mathcal A}}
\newcommand{\Bcal}{{\mathcal B}}
\newcommand{\Ccal}{{\mathcal C}}
\newcommand{\Dcal}{{\mathcal D}}
\newcommand{\Ecal}{{\mathcal E}}
\newcommand{\Fcal}{{\mathcal F}}
\newcommand{\Gcal}{{\mathcal G}}
\newcommand{\Hcal}{{\mathcal H}}
\newcommand{\Ical}{{\mathcal I}}
\newcommand{\Jcal}{{\mathcal J}}
\newcommand{\Kcal}{{\mathcal K}}
\newcommand{\Lcal}{{\mathcal L}}
\newcommand{\Mcal}{{\mathcal M}}
\newcommand{\Ncal}{{\mathcal N}}
\newcommand{\Ocal}{{\mathcal O}}
\newcommand{\Pcal}{{\mathcal P}}
\newcommand{\Qcal}{{\mathcal Q}}
\newcommand{\Rcal}{{\mathcal R}}
\newcommand{\Scal}{{\mathcal S}}
\newcommand{\Tcal}{{\mathcal T}}
\newcommand{\Ucal}{{\mathcal U}}
\newcommand{\Vcal}{{\mathcal V}}
\newcommand{\Wcal}{{\mathcal W}}
\newcommand{\Xcal}{{\mathcal X}}
\newcommand{\Ycal}{{\mathcal Y}}
\newcommand{\Zcal}{{\mathcal Z}}

%%%%%%%%%%%%%%%%%%%%%%%%%%%%
% Blackboard Bold Alphabet %
%%%%%%%%%%%%%%%%%%%%%%%%%%%%
\renewcommand{\AA}{\mathbb{A}}
\newcommand{\BB}{\mathbb{B}}
\newcommand{\CC}{\mathbb{C}}
\newcommand{\FF}{\mathbb{F}}
\newcommand{\GG}{\mathbb{G}}
\newcommand{\NN}{\mathbb{N}}
\newcommand{\PP}{\mathbb{P}}
\newcommand{\QQ}{\mathbb{Q}}
\newcommand{\RR}{\mathbb{R}}
\newcommand{\ZZ}{\mathbb{Z}}

%%%%%%%%%%%%%%%%%%%%%%%%%%
% Boldface Math Alphabet %
%%%%%%%%%%%%%%%%%%%%%%%%%%
\newcommand{\bfa}{{\boldsymbol a}}
\newcommand{\bfb}{{\boldsymbol b}}
\newcommand{\bfc}{{\boldsymbol c}}
\newcommand{\bfd}{{\boldsymbol d}}
\newcommand{\bfe}{{\boldsymbol e}}
\newcommand{\bff}{{\boldsymbol f}}
\newcommand{\bfg}{{\boldsymbol g}}
\newcommand{\bfi}{{\boldsymbol i}}
\newcommand{\bfj}{{\boldsymbol j}}
\newcommand{\bfk}{{\boldsymbol k}}
\newcommand{\bfm}{{\boldsymbol m}}
\newcommand{\bfn}{{\boldsymbol n}}
\newcommand{\bfp}{{\boldsymbol p}}
\newcommand{\bfr}{{\boldsymbol r}}
\newcommand{\bfs}{{\boldsymbol s}}
\newcommand{\bft}{{\boldsymbol t}}
\newcommand{\bfu}{{\boldsymbol u}}
\newcommand{\bfv}{{\boldsymbol v}}
\newcommand{\bfw}{{\boldsymbol w}}
\newcommand{\bfx}{{\boldsymbol x}}
\newcommand{\bfy}{{\boldsymbol y}}
\newcommand{\bfz}{{\boldsymbol z}}
\newcommand{\bfA}{{\boldsymbol A}}
\newcommand{\bfF}{{\boldsymbol F}}
\newcommand{\bfB}{{\boldsymbol B}}
\newcommand{\bfD}{{\boldsymbol D}}
\newcommand{\bfG}{{\boldsymbol G}}
\newcommand{\bfI}{{\boldsymbol I}}
\newcommand{\bfM}{{\boldsymbol M}}
\newcommand{\bfP}{{\boldsymbol P}}
\newcommand{\bfQ}{{\boldsymbol Q}}
\newcommand{\bfT}{{\boldsymbol T}}
\newcommand{\bfU}{{\boldsymbol U}}
\newcommand{\bfX}{{\boldsymbol X}}
\newcommand{\bfY}{{\boldsymbol Y}}
\newcommand{\bfzero}{{\boldsymbol{0}}}
\newcommand{\bfone}{{\boldsymbol{1}}}

%%%%%%%%%%%%%%%%%%%%%%%%%%%%%%
% Miscellaneous New Commands %
%%%%%%%%%%%%%%%%%%%%%%%%%%%%%%
\newcommand{\Aut}{\operatorname{Aut}}
\newcommand{\adj}{\operatorname{adj}}
\newcommand{\Berk}{{\textup{Berk}}}
\newcommand{\Birat}{\operatorname{Birat}}
\newcommand{\characteristic}{\operatorname{char}}
\newcommand{\codim}{\operatorname{codim}}
\newcommand{\Crit}{\operatorname{Crit}}
\newcommand{\crit}{{\textup{crit}}}
\newcommand{\critwt}{\operatorname{critwt}} % valency of a portrait
\newcommand{\Cycle}{\operatorname{Cycles}}
\newcommand{\diag}{\operatorname{diag}}
\newcommand{\dimEnd}{{M}}  % Dimension of End_d^N, i.e., End_d^N \subset \PP^\dimEnd
\newcommand{\Disc}{\operatorname{Disc}}
\newcommand{\Div}{\operatorname{Div}}
\newcommand{\Df}{{Df}}  % adjust spacing?
\newcommand{\Dom}{\operatorname{Dom}}
\newcommand{\dyn}{{\textup{dyn}}}
\newcommand{\End}{\operatorname{End}}
\newcommand{\PortEndPt}{{\textup{endpt}}} 
\newcommand{\END}{\smash[t]{\overline{\operatorname{End}}}\vphantom{E}}
\newcommand{\EndPoint}{E}  % endpoint of a component with no cycle
\newcommand{\ExtOrbit}{\mathcal{EO}} %% Extended orbit
\newcommand{\Fbar}{{\bar{F}}}
\newcommand{\Fix}{\operatorname{Fix}}
\newcommand{\Fiber}{\operatorname{Fiber}}
\newcommand{\Fit}{\operatorname{Fit}}
\newcommand{\FOD}{\operatorname{FOD}}
\newcommand{\FOM}{\operatorname{FOM}}
\newcommand{\Frame}{\operatorname{Fr}}
\newcommand{\Gal}{\operatorname{Gal}}
\newcommand{\GITQuot}{/\!/}
\newcommand{\GL}{\operatorname{GL}}
\newcommand{\GR}{\operatorname{\mathcal{G\!R}}}
\newcommand{\Hom}{\operatorname{Hom}}
\newcommand{\Index}{\operatorname{Index}}
\newcommand{\Image}{\operatorname{Image}}
\newcommand{\Isom}{\operatorname{Isom}}
\newcommand{\hhat}{{\hat h}}
\newcommand{\Ker}{{\operatorname{ker}}}
\newcommand{\Ksep}{K^{\text{sep}}}  %% separable closure of K
\newcommand{\Length}{\operatorname{Length}}
\newcommand{\Lift}{\operatorname{Lift}}
\newcommand{\limstar}{\lim\nolimits^*}
\newcommand{\limstarn}{\lim_{\hidewidth n\to\infty\hidewidth}{\!}^*{\,}}
\newcommand{\Mat}{\operatorname{Mat}}
\newcommand{\maxplus}{\operatornamewithlimits{\textup{max}^{\scriptscriptstyle+}}}
\newcommand{\MOD}[1]{~(\textup{mod}~#1)}
\newcommand{\Model}{\operatorname{Model}}
\newcommand{\Mor}{\operatorname{Mor}}
\newcommand{\Moduli}{\mathcal{M}}
\newcommand{\MODULI}{\overline{\mathcal{M}}}
\newcommand{\Norm}{{\operatorname{\mathsf{N}}}}
\newcommand{\notdivide}{\nmid}
\newcommand{\normalsubgroup}{\triangleleft}
\newcommand{\NS}{\operatorname{NS}}
\newcommand{\onto}{\twoheadrightarrow}
\newcommand{\ord}{\operatorname{ord}}
\newcommand{\Orbit}{\mathcal{O}}
\newcommand{\Pcase}[3]{\par\noindent\framebox{$\boldsymbol{\Pcal_{#1,#2}}$}\enspace\ignorespaces}
\newcommand{\pd}{p}       %% period of a point
\newcommand{\bfpd}{\bfp}  %% period (p,p,...,p) of a list of points
\newcommand{\Per}{\operatorname{Per}}
\newcommand{\Perp}{\operatorname{Perp}}
\newcommand{\PrePer}{\operatorname{PrePer}}
\newcommand{\PGL}{\operatorname{PGL}}
\newcommand{\Pic}{\operatorname{Pic}}
\newcommand{\Portrait}{\mathfrak{Port}}  %% Is this good notation for the category of portraits? ***
\newcommand{\prim}{\textup{prim}}
\newcommand{\Prob}{\operatorname{Prob}}
\newcommand{\Proj}{\operatorname{Proj}}
\newcommand{\Qbar}{{\bar{\QQ}}}
\newcommand{\rank}{\operatorname{rank}}
\newcommand{\Rat}{\operatorname{Rat}}
\newcommand{\reduced}{{\textup{red}}}
\newcommand{\Resultant}{\operatorname{Res}}
\newcommand{\Residue}{\operatorname{Residue}} %% residue
\renewcommand{\setminus}{\smallsetminus}
\newcommand{\sgn}{\operatorname{sgn}}
\newcommand{\shafdim}{\operatorname{ShafDim}}
\newcommand{\SL}{\operatorname{SL}}
\newcommand{\Span}{\operatorname{Span}}
\newcommand{\Spec}{\operatorname{Spec}}
\renewcommand{\ss}{{\textup{ss}}}
\newcommand{\stab}{{\textup{stab}}}
\newcommand{\Stab}{\operatorname{Stab}}
\newcommand{\SemiStable}[1]{\textup{(SS$_{#1}$)}}  %% Label in stability theorem for End_d^N x (\PP^N)^n
\newcommand{\Stable}[1]{\textup{(St$_{#1}$)}}      %% Label in stability theorem for End_d^N x (\PP^N)^n
\newcommand{\Support}{\operatorname{Support}}
\newcommand{\Sym}{\operatorname{Sym}}  %% Symmetric group
\newcommand{\TableLoopSpacing}{{\vrule height 15pt depth 10pt width 0pt}} %% Put extra space into table with loop figures
\newcommand{\tors}{{\textup{tors}}}
\newcommand{\Trace}{\operatorname{Trace}}
\newcommand{\trianglebin}{\mathbin{\triangle}} % symmetric set difference
\newcommand{\tr}{{\textup{tr}}} % for K/k trace
\newcommand{\UHP}{{\mathfrak{h}}}    % Upper half plane
\newcommand{\val}{\operatorname{val}} % valency of a portrait
\newcommand{\wt}{\operatorname{wt}} %% weight of a portrait
\renewcommand{\>}{\rangle}
\newcommand{\Hilb}{\operatorname{Hilb}} %added by Yohsuke
\newcommand{\id}{\operatorname{id}} %added by Yohsuke
\newcommand{\pr}{\operatorname{pr}} %added by Yohsuke

\newcommand{\pmodintext}[1]{~\textup{(mod}~#1\textup{)}}
\newcommand{\ds}{\displaystyle}
\newcommand{\longhookrightarrow}{\lhook\joinrel\longrightarrow}
\newcommand{\longonto}{\relbar\joinrel\twoheadrightarrow}
\newcommand{\SmallMatrix}[1]{%
  \left(\begin{smallmatrix} #1 \end{smallmatrix}\right)}

%%%%%%%%%%%%%%%%%%%%%%%%%%%%%%%%%%%%%%%%%%%%%%%%%%%%%%%%%%%%%%%%%%%%%%

\begin{abstract}
We study arithmetic distribution relations and the inverse function
theorem in algebraic and arithmetic geometry, with an emphasis on
versions that can be applied uniformly across families of varieties
and maps. In particular, we prove two explicit versions of the inverse
function theorem, the first via general distribution and separation
inequalities that may be of independent interest, the second via a
careful implementation of classical Newton iteration.
\end{abstract}

\maketitle

\tableofcontents

%%%%%%%%%%%%%%%%%%%%%%%%%%%%%%%%%%%%%%%%%%%%%%%%%%%%%%%%%%%%%%%%%%%%%%
\section{Introduction}
\label{section:introduction}
%%%%%%%%%%%%%%%%%%%%%%%%%%%%%%%%%%%%%%%%%%%%%%%%%%%%%%%%%%%%%%%%%%%%%%

In this article, we study arithmetic distribution relations and the
inverse function theorem in algebraic and arithmetic geometry, with an
emphasis on versions that can be applied uniformly across families of
varieties and maps. Roughly speaking, such results have the following
general form:

\begin{theoremtemplate}[Distribution Relation]
Let~$\f:W\to{V}$ be a map between metric spaces. Assume that~$V,W,\f$
satisfy suitable hypotheses. Then for all~$P\in{W}$ and all~$q\in{V}$,
\[
d_V\bigl( \f(P),q \bigr) \gg\ll \prod_{Q\in\f^{-1}(q)} d_W(P,Q)^{e_\f(Q)}
\]
for appropriately defined local multiplicities~$e_\f(Q)$.
\end{theoremtemplate}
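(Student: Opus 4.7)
The plan is to reduce the claimed comparison to a local identity between ideal sheaves and to convert that identity into an additive height relation via functoriality of Weil functions. In arithmetic geometry, the arithmetic distance is governed by a local height: one has $-\log d_V(\cdot,q) = \lambda_{\gM_q} + O(1)$, where $\lambda_{\gM_q}$ is a Weil/local height function attached to the maximal ideal $\gM_q$ of $q$ in~$V$, and analogously for $d_W(\cdot,Q)$. Taking $-\log$ of the claimed inequality rewrites it as the additive comparison
\[
\lambda_{\gM_q}\bigl(\f(P)\bigr) = \sum_{Q \in \f^{-1}(q)} e_\f(Q)\,\lambda_{\gM_Q}(P) + O(1),
\]
which is exactly what functoriality of Weil functions under $\f$ should produce.

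First I would establish the underlying ideal-sheaf identity
\[
\f^{-1}\gM_q \cdot \Ocal_W = \prod_{Q \in \f^{-1}(q)} \gM_Q^{e_\f(Q)}
\]
in an \'etale or formal neighborhood of the fiber $\f^{-1}(q)$, with $e_\f(Q)$ defined as the length of $\Ocal_{W,Q}/\f^{-1}\gM_q\cdot\Ocal_{W,Q}$. For $\f$ a finite morphism of smooth curves this is the classical definition of the ramification index; in higher dimension it is this ideal-theoretic formulation that one should use so that the identity holds as stated.

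Next I would invoke the standard functoriality of Weil/local height functions to turn the ideal identity into the additive height relation. Away from a fixed neighborhood of $\f^{-1}(q)$, both $-\log d_V(\f(P),q)$ and each $-\log d_W(P,Q)$ are bounded, and the desired comparison is trivial. Near each preimage $Q$, after choosing suitable \'etale-local coordinates the map $\f$ has the normal form $z \mapsto z^{e_\f(Q)}$ up to a unit, so the local comparison becomes elementary.

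The main obstacle, and the real content of the paper, is uniformity. A bare application of Weil-function functoriality produces an unspecified $O(1)$ that depends on the whole datum $(V,W,\f,q)$; to make the distribution relation usable one must control that error in terms of explicit geometric invariants (degrees of defining equations, heights of coefficients, discriminants of $\f$, etc.) so that the inequality remains uniform as the datum varies in a family. Handling this is precisely the role of the distribution and separation inequalities, and of the effective Newton-iteration argument, advertised in the abstract: they replace the soft Weil-function comparison with quantitative estimates whose constants depend in an explicit and controllable way on the input.
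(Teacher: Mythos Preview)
Your central step --- the ideal-sheaf identity
\[
\f^{-1}\gM_q \cdot \Ocal_W \;=\; \prod_{Q \in \f^{-1}(q)} \gM_Q^{\,e_\f(Q)}
\]
--- is \emph{false} in dimension greater than~$1$ when $\f$ is ramified, and this is precisely the error the paper is written to correct. Take $\f:\AA^2\to\AA^2$, $\f(x,y)=(x^2,y)$, and $q=(0,0)$. The unique preimage is $Q=(0,0)$ with $e_\f(Q)=2$, so your identity predicts $\f^{-1}\gM_q=(x^2,y)$ equals $\gM_Q^2=(x^2,xy,y^2)$; but $(x^2,y)\supsetneq(x^2,xy,y^2)$ strictly. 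The asserted \'etale-local normal form $z\mapsto z^{e_\f(Q)}$ likewise exists only for curves; in higher dimension there is no such one-variable model at a ramified point, and your local computation collapses. The paper's Section~\ref{section:counterexample} turns exactly this failure into a counterexample to the two-sided distribution relation, so the $\gg\ll$ you are trying to prove is false without further hypotheses.

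What the paper does instead is avoid fixing $q$ altogether. It works uniformly by comparing the pullback of the diagonal $\Delta_V\subset V\times V$ with translates of $\Delta_W$, using an auxiliary parameter space $Z=V\times_{W^{(d)}}W^d$ built from the Hilbert--Chow morphism. The key scheme-theoretic fact (Lemma~\ref{lem:generalcontainment}) is only a \emph{containment}
\[
T_{Z_{\reduced}} \;\subset\; \sum_{i=1}^{d} \mu_i^{-1}(\Delta_W)_{Z_{\reduced}},
\]
not an equality, and this yields only the one-sided distribution \emph{inequality} $\d_V(\f(P),q)\le\sum_Q e_\f(Q)\,\d_W(P,Q)+O(\cdots)$. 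Equality of subschemes, and hence the two-sided relation you aim for, is recovered only under the additional hypotheses $\dim V=1$ or $\f$ \'etale (Lemma~\ref{lem:curveetalecases}); in those cases your approach would also go through, since the ideal identity is then valid. The ``uniformity'' issue you flag is real but secondary: the primary obstruction is that the underlying algebraic identity simply does not hold.
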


\begin{theoremtemplate}[Inverse Function Theorem]
Let~$\f:W\to{V}$ be a map between metric spaces. Assume that~$V,W,\f$
satisfy suitable hypotheses. There are exponents~$n,m>0$ and a
subset~$\Rcal_\f\subset{W}$ so that if~$P\in{W}$ and~$q\in{V}$ satisfy
\[
d_V\bigl(\f(P),q\bigr) \ll d_V(P,\Rcal_\f)^n,
\]
then there is a \textup(unique\textup)~$Q\in{W}$ satisfying
\[
\f(Q)=q
\quad\text{and}\quad
d_W(P,Q) \ll d_V\bigl(\f(P),q\bigr) \cdot d_V(P,\Rcal_\f)^{-m}.
\]
\end{theoremtemplate}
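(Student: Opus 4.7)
The plan is to attack the template in two complementary ways, as signaled in the abstract. The first strategy deduces the Inverse Function Theorem from the Distribution Relation Template combined with a separation estimate. Granting the distribution inequality, the hypothesis that $d_V(\f(P),q)$ is very small relative to $d_V(P,\Rcal_\f)^n$ forces the product $\prod_{Q\in\f^{-1}(q)} d_W(P,Q)^{e_\f(Q)}$ to be correspondingly small. A companion separation inequality, asserting that distinct preimages of nearby points are separated by at least a fixed power of the distance to the ramification locus, rules out the possibility that this smallness is spread across several factors. Consequently a unique $Q\in\f^{-1}(q)$ is close to $P$, while the remaining factors contribute a bounded amount, and extracting $d_W(P,Q)$ from the inequality yields the stated upper bound with exponent $m$ proportional to the total ramification weight at $Q$.

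The second strategy is a quantitative implementation of Newton's method, iterating
\[
P_{k+1} = P_k - D\f(P_k)^{-1}\bigl(\f(P_k)-q\bigr).
\]
Its essential input is an inverse-derivative bound $\|D\f(P)^{-1}\|\ll d_V(P,\Rcal_\f)^{-a}$, obtained by expressing $D\f^{-1}$ via the adjugate and controlling the order of vanishing of $\det D\f$ along the ramification locus. Combined with a Taylor/Lipschitz bound on $D\f$ and a Kantorovich-style convergence criterion, one shows that the hypothesis $d_V(\f(P),q)\ll d_V(P,\Rcal_\f)^n$, for $n$ sufficiently large in terms of $a$, confines the iterates to a ball on which $\f$ is a homeomorphism, forces quadratic convergence to a unique $Q\in\f^{-1}(q)$, and delivers the distance estimate $d_W(P,Q)\ll d_V(\f(P),q)\cdot d_V(P,\Rcal_\f)^{-m}$ with explicit $m$ in terms of $a$.

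The main obstacle in both approaches is to make the exponents $n,m$ and the implied constants \emph{uniform} across families of varieties and maps. This requires a coherent choice of local and global distance functions on $W$ and $V$, careful bookkeeping of the way arithmetic heights and local higher-derivative norms of $\f$ enter the estimates, and a product-formula argument to combine archimedean and non-archimedean contributions. In the Newton approach the ultrametric inequality simplifies the convergence analysis at non-archimedean places, but at archimedean places one must verify that the iterates never leave the region in which the bound $\|D\f^{-1}\|\ll d_V(P,\Rcal_\f)^{-a}$ remains effective, which is precisely where the quantitative condition on $n$ is used.
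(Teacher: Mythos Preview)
Your proposal is correct and aligns closely with the paper's two-pronged program: Version~I (Theorem~\ref{thm:glinvfunc}) is obtained exactly by combining the distribution inequality with the separation estimate and a triangle-inequality comparison of $\l_{A(\f)}(Q)$ with $\l_{A(\f)}(P)$, yielding exponents $(n,m)=(d,d-1)$; Version~II (Theorem~\ref{thm:strong-inverse-function-all-absolute-values}) is obtained via Newton iteration with the adjugate bound on $(D\f)^{-1}$ and a second-order Taylor estimate, giving the optimal $(n,m)=(2,1)$. One small correction: the Newton argument is carried out over a \emph{single} complete field (with the archimedean and non-archimedean cases handled separately), not assembled via the product formula; the global $M_K$-uniformity is achieved only in the distribution/separation approach.
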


In both of these formulations, the implicit constants may depend
on~$V,W,\f$, and also possibly on the distance of~$(P,q)$ to some sort
of ``boundary'' of~$W\times{V}$.

\begin{remark}
In this paper, we use ``arithmetic distance'' $\d$ instead of usual distance $d$ 
since it is compatible with height theory in arithmetic geometry.
Roughly speaking, the two ``distance functions'' $\d$ and $d$ are related each other 
via the formula $-\log d(\cdot, \cdot) = \d (\cdot, \cdot)$.
\end{remark}

We are interested in the case that~$V$ and~$W$ are algebraic varieties
defined over either a global field or a complete field
and~$\f:W\to{V}$ is a generically finite morphism.  In this setting,
versions of the distribution relation and inverse function theorem
were given in~\cite{silverman:arithdistfunctions}, where the inverse
function theorem was proven by combining the distribution relation
with a separation result. Unfortunately, the proof of the stated
distribution relation in~\cite{silverman:arithdistfunctions} is
incorrect due to a miscalculation with multiplicities.

The primary goals of this article may thus be summarized as follows:
\begin{parts}
\Part{\textbullet}
Give a counterexample to the arithmetic distribution relation
in~\cite[Proposition~6.2(b)]{silverman:arithdistfunctions} and explain
where the error in the proof occurs.  See
Section~\ref{section:counterexample}.
\Part{\textbullet}
Prove that the distribution relation
in~\cite{silverman:arithdistfunctions} is correct if $\dim{V}=1$ or
if~$\f$ is {\'e}tale.  See
Section~\ref{section:distribution-separation-inversefunction}.
\Part{\textbullet}
Prove that the distribution relation is valid in general as a~$\ll$
inequality.  Using the distribution inequality and a separation
estimate, prove the inverse function theorem as
in~\cite{silverman:arithdistfunctions} with the ramification divisor
replaced by a certain annihilator subscheme of~$W$.  For
maps~$\f:W\to{V}$ of degree~$d$, this gives the inverse function
theorem with exponents~$(n,m)=(d,d-1)$.  See
Section~\ref{section:distribution-separation-inversefunction}.
\Part{\textbullet}
Indicate briefly why the~$\dim{V}=1$ and~$\f$ {\'e}tale cases of the
inverse function theorem suffice for various arithmetic applications,
such as bounding the number of quasi-integral points on elliptic
curves~\cite{MR895285} and in orbits of maps
on~$\PP^1$~\cite{silverman:dynamicalintegerpoints}.  See
Section~\ref{section:quasiintergral}.
\Part{\textbullet}
Give an alternative independent proof of the inverse function theorem
over complete fields with optimal exponents~$(n,m)=(2,1)$.  We do
everything uniformly on quasi-projective varieties, so the proof, via
an {\'e}tale map reduction to~$\AA^N$ and then classical Newton
iteration, requires a lot of finicky computation, as well as requiring
handling the non-archimedean and archimedean cases separately.  See
Sections~\ref{section:invfunccompletefields},~\ref{section:newtonnonarch},~\ref{section:newtonarch}.
\end{parts}

%%%%%%%%%%%%%%%%%%%%%%%%%%%%%%%%%%%%%%%%%%%%%%%%%%%%%%%%%%%%%%%%%%%%%%
\section{Notation and Terminology}
%%%%%%%%%%%%%%%%%%%%%%%%%%%%%%%%%%%%%%%%%%%%%%%%%%%%%%%%%%%%%%%%%%%%%%

We mostly follow the terminology and notation
from~\cite{lang:diophantinegeometry}
and~\cite{silverman:arithdistfunctions}, including in particular the
following.

\begin{parts}
\Part{\textbullet}
A \emph{variety} over a field~$k$ is an irreducible and reduced scheme
of finite type over~$k$.
\Part{\textbullet}
For two closed subschemes $X, Y \subset V$ of a scheme $V$, the sum $X + Y$ is the closed subscheme which is defined by
$\Ical_{X}\Ical_{Y}$ where $\Ical_{X}$ and $\Ical_{Y}$ are ideal sheaves of $X$ and $Y$.
\Part{\textbullet}
In
Sections~\ref{section:counterexample}--\ref{section:distribution-separation-inversefunction}
we let~$K$ be a field with a complete set of inequivalent absolute
values~$M_K$ that are normalized so that the product formula holds.
We fix an algebraic closure~$\Kbar$ for~$K$ and write~$M(\Kbar)$ for
the set of absolute values on~$\Kbar$ extending the
absolute values in~$M_K$.  In
Sections~\ref{section:controots}--\ref{section:newtonarch}, we let~$K$
be a field that is complete with respect to an absolute
value~\text{$|\,\cdot\,|$}.
\Part{\textbullet}
An~\emph{$M_K$-constant} is a function~$\g:{M(\Kbar)}\to\RR_{\geq 0}$ such that~$\g(v)$
depends only on the restriction~$v|_K$ and
\text{$\bigl\{v|_K:\g(v)\ne0\big\}$} is a finite subset of~$M_K$.
\Part{\textbullet}
Let~$V/K$ be a variety. A function $\mu:V(\Kbar)\times{M(\Kbar)}\to\RR$
is \emph{$M_K$-bounded} if there exists an~$M_K$-constant~$\g$
satisfying~$\mu(P,v)\le\g(v)$ for all
$(P,v)\in{V}(\Kbar)\times{M(\Kbar)}$.
\Part{\textbullet}
We write~$O(M_K)$
to indicate relations that hold up to an~$M_K$-constant. For
example, we use the following notation, where the right-hand property
is required to hold for some~$C>0$ and some $M_K$-constant~$\g$.
\begin{align*}
  f=g+O(M_K)  &\;\Longleftrightarrow\;  |f-g|\le\g. \\
  f\le g+O(h)+O(M_K)  &\;\Longleftrightarrow\;  f\le g+C|h|+\g. \\
  f =  g+O(h)+O(M_K)  &\;\Longleftrightarrow\;  |f-g| \le C|h|+\g. \\
  f \ll g + O(M_K)    &\;\Longleftrightarrow\;  f \le Cg + \g.\\
  f \gg\ll g + O(M_K)    &\;\Longleftrightarrow\;  f \ll g + O(M_K) \\
  &\hspace*{4em}~\text{and}~g \ll f + O(M_K).
\end{align*}
\end{parts}

%%%%%%%%%%%%%%%%%%%%%%%%%%%%%%%%%%%%%%%%%%%%%%%%%%%%%%%%%%%%%%%%%%%%%%
\subsection{Bounded sets}
\label{section:boundedsets}
%%%%%%%%%%%%%%%%%%%%%%%%%%%%%%%%%%%%%%%%%%%%%%%%%%%%%%%%%%%%%%%%%%%%%%
In this section we recall some standard definitions regarding bounded subsets
of schemes defined over valued fields. For further material on bounded
sets, see for example~\cite[Ch.~10, Sec.~1]{lang:diophantinegeometry}.

\begin{definition}
Let~$\bigl(K,|\,\cdot\,|\bigr)$ be a complete field.  Let~$U=\Spec{A}$ be an
affine scheme of finite type over~$K$.  A subset~$B \subset{U}(K)$ is
called \emph{bounded} or \emph{affine bounded} if for every~$f\in{A}$,
we have
\[
\sup_{x\in B} {}  \bigl|f(x)\bigr|  <\infty.
\]
\end{definition}

\begin{remark}
Let~$\f \colon U \longrightarrow U'$ be a morphism between affine
schemes of finite type over~$K$.
\begin{parts}
  \Part{(1)}
  If~$B \subset U(K)$ is a bounded subset of~$U$, then
  $f(B)\subset{U'}(K)$ is a bounded subset of~$U'$.
  \Part{(2)}
  If~$\f$ is finite and~$B'\subset{U'}(K)$ is a bounded subset
  of~$U'$, then the set~$\f^{-1}(B') = \{x \in U(K) : \f(x) \in B'\}$ is a bounded subset of~$U$.
\end{parts}
\end{remark}

\begin{definition}
Let~$\bigl(K,|\,\cdot\,|\bigr)$ be a complete field.  Let~$X$ be a
scheme of finite type over~$K$.  A subset~$B \subset X(K)$ is called
\emph{bounded} if there is a finite open affine cover
$\{U_{i}\}_{i=1}^{r}$ of~$X$ and affine bounded subsets
$B_{i}\subset{U}_{i}(K)$ such that~$B=\bigcup_{i=1}^{r}B_{i}$.
\end{definition}

\begin{definition}
Let~$\bigl(K,|\,\cdot\,|\bigr)$ be a complete field.
Let~$X =\Spec A$ be an affine scheme of finite type over~$K$.
A \emph{standard bounded subset} is a subset~$B \subset X(K)$  of the form
\[
B = \{ x \in X(K) : \bigl|f_{1}(x)\bigr| \leq b_{1}, \dots , \bigl|f_{r}(x)\bigr| \leq b_{r}\},
\]
where~$A=K[f_1,\ldots,f_r]$ and~$b_1,\ldots,b_r>0$.  Note that a
standard bounded subset is an affine bounded subset.
\end{definition}

\begin{remark}
More generally, for a field~$K$ with a set of absolute values~$M_K$,
one says that a subset~$X\subset\Spec(A)(K)\times{M_K}$
is~\emph{$M_K$-bounded} if for every~$f\in{A}$ there is
an~$M_K$-constant~$\g_f$ such that
\[
\sup_{\substack{x\in\Spec(A)(K)\\ \text{s.t. $(x,v)\in X$}\\}}  \bigl|f(x)\bigr|_v \le e^{\g_f(v)}.
\]
\end{remark}

%%%%%%%%%%%%%%%%%%%%%%%%%%%%%%%%%%%%%%%%%%%%%%%%%%%%%%%%%%%%%%%%%%%%%%
\subsection{Local heights attached to subschemes and arithmetic distance functions}
\label{section:htsandarithdistfunc} 
%%%%%%%%%%%%%%%%%%%%%%%%%%%%%%%%%%%%%%%%%%%%%%%%%%%%%%%%%%%%%%%%%%%%%%

We briefly recall from~\cite{silverman:arithdistfunctions} the
notation and construction of local height functions attached to closed
subschemes, arithmetic distance functions, and local height functions
attached to the boundary of a quasi-projective scheme.  We refer the
reader to~\cite{silverman:arithdistfunctions} for further details.
However, we note that most of the existing literature,
including~\cite{silverman:arithdistfunctions}, assumes that the base
scheme is irreducible and reduced, but in this paper we at times use
base schemes that lack these properties.  We refer the reader
to~\cite{htsclosedsubschemes2020} for an extension of the theory of
local heights to schemes that are not necessarily irreducible or
reduced.

Let~$V/K$ be a projective variety. We can assign to proper each closed
subscheme~$X\subset{V}$ a local height function
\[
\l_X : V(\Kbar)\times M_K \longrightarrow \RR\cup\infty,
\]
uniquely determined up to an~$M_K$-bounded function by the properties
that if~$X=D$ is an effective divisor, then~$\l_X=\l_D$ is the usual
Weil local height, and if~$X$ and~$Y$ are closed subschemes, then
$\l_{X\cap{Y}}=\min\{\l_X,\l_Y\}$. (The intersection~$X\cap{Y}$ is
defined to be the scheme whose ideal sheaf
is~$\Ical_{X\cap{Y}}=\Ical_X+\Ical_Y$.)  These subscheme local heights
have a number of natural functorial properties, as described
in~\cite[Theorem~2.1]{silverman:arithdistfunctions}, including
functoriality~$\l_{\f^*X}=\l_X\circ\f$ for morphisms~$\f$.

Local height function $\l_{X}$ is bounded below up to $M_{K}$-bounded function, 
so we can always assume it is non-negative if the difference by $M_{K}$-bounded function does not matter. 

Let~$\D(V)\subset{V}\times{V}$ be the diagonal.
The \emph{arithmetic distance function} on~$V$ is the local height
\[
\d_V = \l_{\D(V)}.
\]
It is well-defined up to an~$M_{K}$-bounded function, and satisfies a
number of standard properties described
in~\cite[Proposition~3.1]{silverman:arithdistfunctions}, including the
following two triangle inequalities, where we omit~$v\in{M_K}$ from
the notation:
\begin{align*}
  \d_V(P,R) &\ge \min\bigl\{ \d_V(P,Q),\,\d_V(Q,R) \bigr\}. \\
  \l_X(Q) &\ge \min\bigl\{\l_X(P),\,\d_V(P,Q)\bigr\}.
\end{align*}

If~$V$ is merely quasi-projective, we embed~$V$ in a projective
variety~$\overline{V}$ and define the boundary~$\partial{V}$ to
equal~$\overline{V}\setminus{V}$ with its induced-reduced scheme
structure. Then to each closed subscheme~$X\subset{V}$ we can assign a
local height function~$\l_X$ that is well-defined up
to~$O(\l_{\partial{V}})$. These quasi-projective local height
functions inherit the functorial properties of the projective local
heights, except that every relation holds only up
to~$O(\l_{\partial{V}})$.

\begin{remark}
We also take this opportunity to correct some typographical errors
in~\cite{silverman:arithdistfunctions}. In the definition of the
union~$X\cup{Y}$ in~(iii) on page~196
of~\cite{silverman:arithdistfunctions}, the ideal should be
denoted~$\Ical_{X\cup{Y}}$. And
in~\cite[Theorem~2.1(e)]{silverman:arithdistfunctions}, the local
height should be for the union, not the intersection, so the displayed
formula in the statement of~(e) on page~198 and the proof of~(e) on
page~199 should read
\[
\max\{\l_X,\l_Y\} \le \l_{X\cup Y} \le \l_X+\l_Y.
\]
\end{remark}

%%%%%%%%%%%%%%%%%%%%%%%%%%%%%%%%%%%%%%%%%%%%%%%%%%%%%%%%%%%%%%%%%%%%%%
\section{A counterexample to the arithmetic distribution relation in~\cite{silverman:arithdistfunctions}}
\label{section:counterexample}
%%%%%%%%%%%%%%%%%%%%%%%%%%%%%%%%%%%%%%%%%%%%%%%%%%%%%%%%%%%%%%%%%%%%%%

The arithmetic distribution relation as stated
in~\cite{silverman:arithdistfunctions} says the following:

\begin{proposition}
\label{proposition:distrel1}
\textup{(\cite[Proposition~6.2(b)]{silverman:arithdistfunctions})}
Let~$\f:W\to V$ be a finite map of smooth quasi-projective varieties.
Let~$P\in{W}$ and~$q\in{V}$. Then
\[
\delta_{V}(\f(P), q; v) 
= \sum_{\substack{Q \in W(\Kbar)\\ \f(Q)=q\\}}
e_{\f}(Q) \delta_{W}(P, Q;  v) + O( \lambda_{\partial (W \times V)}(P,q; v)).
\]
Here~$e_\f(Q)$ is the ramification index of~$\f$ at~$Q$, so for
example, if~$Q$ is not in the ramification locus of~$\f$, then
$e_\f(Q/q)=1$.
\end{proposition}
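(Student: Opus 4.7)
The plan is to attempt a proof of Proposition~\ref{proposition:distrel1} along the functorial lines of \cite[Theorem~2.1]{silverman:arithdistfunctions}, pulling the distribution relation back to a scheme-theoretic identity on the product $W \times V$ and then exploiting the pullback and intersection formalism for local heights.

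First, I would rewrite the left-hand side as the local height of the graph. Let $F = \f \times \id_V \colon W \times V \to V \times V$, so that $F^{-1}(\Delta_V)$ is exactly the graph $\Gamma_\f \subset W \times V$: in local coordinates $u_i, v_i$ on the two factors of the target, the ideal of $\Delta_V$ is $(u_i - v_i)$, which pulls back to $(\f^* u_i - v_i)$ and cuts out $\Gamma_\f$ as a reduced, smooth subscheme. Functoriality then gives
\[
\delta_V\bigl(\f(P), q\bigr) = \lambda_{\Delta_V}\bigl(F(P, q)\bigr) = \lambda_{\Gamma_\f}(P, q) + O\bigl(\lambda_{\partial(W \times V)}\bigr).
\]

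Second, I would intersect $\Gamma_\f$ with the slice $W \times \{q\}$ to obtain the zero-dimensional subscheme $Z_q := \Gamma_\f \cap (W \times \{q\})$, whose support is $\f^{-1}(q) \times \{q\}$ and whose local ring at $(Q, q)$ is $\Ocal_{W,Q}/\f^* \mathfrak{m}_{V,q}\Ocal_{W,Q}$, of length $e_\f(Q)$. The min-formula $\lambda_{\Gamma_\f \cap (W\times\{q\})} = \min\{\lambda_{\Gamma_\f}, \lambda_{W\times\{q\}}\}$, together with the divergence of $\lambda_{W\times\{q\}}$ as the second coordinate approaches $q$, gives $\lambda_{\Gamma_\f}(P, q) = \lambda_{Z_q}(P, q) + O(M_K)$. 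The proposition would thus reduce to the identification
\[
\lambda_{Z_q}(P, q) \stackrel{?}{=} \sum_{Q \in \f^{-1}(q)} e_\f(Q)\,\delta_W(P, Q) + O(M_K),
\]
which I would attempt by splitting $Z_q$ into its local components at each $Q$ and matching each contribution against $e_\f(Q)\,\delta_W(P, Q)$.

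The hard part --- and, as signaled in the introduction, the reason Proposition~\ref{proposition:distrel1} fails as stated --- is precisely this last identification. The local height of a zero-dimensional subscheme is not determined by its length alone: if $Z = \Spec(\Ocal_{W,Q}/I)$ has length $e$, then $\lambda_Z$ reflects the specific generators of $I$, and is genuinely comparable to $e\cdot\delta_W(\cdot, Q)$ (up to bounded additive error) only when $I$ coincides, up to units, with the power $\mathfrak{m}_Q^e$. When $\dim V = 1$, or when $\f$ is {\'e}tale at $Q$, the fiber ideal $\f^*\mathfrak{m}_{V,q}\Ocal_{W,Q}$ takes this symmetric form ($\mathfrak{m}_Q^{e_\f(Q)}$ up to units, or $\mathfrak{m}_Q$ itself, respectively), and the plan goes through. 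In general, however, the inclusion $\mathfrak{m}_Q^{e_\f(Q)} \subset \f^*\mathfrak{m}_{V,q}\Ocal_{W,Q}$ is strict, and then $\lambda_{Z_q}(P, q)$ is strictly dominated by the weighted sum --- yielding only the one-sided inequality $\delta_V(\f(P), q) \ll \sum_Q e_\f(Q)\,\delta_W(P, Q) + O(M_K)$, with a gap that grows without additive bound and thus obstructs the equality asserted in the proposition. Repairing this evidently requires enlarging the contribution at each $Q$ from $e_\f(Q)\,\delta_W$ to the local height of a larger annihilator-type subscheme, exactly as the introduction foreshadows.
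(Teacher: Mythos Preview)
The paper does not prove Proposition~\ref{proposition:distrel1}; it disproves it. Section~\ref{section:counterexample} exhibits an explicit counterexample (the map $([x,y],[z,w])\mapsto([x^2,y^2],[z,w])$ on $\PP^1\times\PP^1$) and diagnoses the error in the original argument of~\cite{silverman:arithdistfunctions}: that argument worked on $W\times W$ via a Galois cover and asserted the scheme-theoretic identity $(\f\times\f)^*\Delta(V)=\sum_i(1\times\tau_i)^*\Delta(W)$, which fails in dimension~$\ge2$ because only one ideal containment holds. Your analysis arrives at the same verdict---the equality fails in general, holds when $\dim V=1$ or $\f$ is \'etale, and survives only as the inequality $\le$---so on the level of conclusions you agree with the paper.

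Your route to that conclusion is different and has a gap worth naming. You work on $W\times V$ via the graph $\Gamma_\f=(\f\times\id)^{-1}\Delta_V$ and then slice at $W\times\{q\}$ to obtain the fiber scheme $Z_q$. The step ``$\lambda_{\Gamma_\f}(P,q)=\lambda_{Z_q}(P,q)+O(M_K)$'' is not uniform: the subschemes $W\times\{q\}$ and $Z_q$ vary with $q$, so the implicit constant in the min-formula varies with $q$, and you lose exactly the uniformity the statement requires. This is why the paper's proof of the corrected inequality (Theorem~\ref{thm:distribution-inequality}) routes through the Hilbert scheme and the symmetric power $W^{(d)}$: the auxiliary scheme $Z=V\times_{W^{(d)}}W^d$ packages all fibers simultaneously, so the comparison $T_{Z_{\reduced}}\subset\sum_i\mu_i^{-1}(\Delta_W)_{Z_{\reduced}}$ of Lemma~\ref{lem:generalcontainment} is a single containment on a fixed ambient scheme, with constants independent of $q$. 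Your pointwise diagnosis---that the fiber ideal $\f^*\mathfrak{m}_{V,q}\Ocal_{W,Q}$ strictly contains $\mathfrak{m}_Q^{e_\f(Q)}$ in general, so the local height of the fiber is strictly dominated by $e_\f(Q)\,\delta_W(\cdot,Q)$---is correct and is essentially the same obstruction the paper exhibits, just localized at one $Q$ rather than expressed as a global failure of scheme identities. (One small correction: the paper's repair does not enlarge the summands in the distribution relation itself; Theorem~\ref{thm:distribution-inequality} keeps $e_\f(Q)\,\delta_W(P,Q)$ and accepts the inequality. The annihilator subscheme $A(\f)$ enters only later, in the separation estimate and the inverse function theorem.)
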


\begin{example}
\label{example:distrelcounterexample1}
We give a simple counter-example to
Proposition~\ref{proposition:distrel1}. We consider the map
\[
\begin{CD}
  W = \PP^1\times\PP^1 @>\f>> \PP^1\times\PP^1 = V \\
  \bigl([x,y],[z,w]\bigr) @>>> \bigl([x^2,y^2],[z,w]\bigr).\\
\end{CD}
\]
We fix an absolute value~$v$ on~$K$, and for notational convenience,
we drop~$v$ from the notation. We take
\[
q = \bigl([0,1],[0,1]\bigr) \quad\text{and}\quad
P = \bigl([a,1],[b,1]\bigr)\quad\text{with $|a|<1$ and $|b|<1$.}
\]
Then
\[
\f^{-1}(q)=\{Q\}=\bigl\{ \bigl([0,1],[0,1]\bigr) \bigr\}
\quad\text{with $e_\f(Q/q)=2$,}
\]
and
\[
\f(P) = \bigl([a^2,1],[b,1]\bigr).
\]
Under our assumption that ~$|a|<1$ and~$|b|<1$, we see that
\begin{align}
\label{eqn:dVfPqab}
\d_V(\f (P),q) &= -\log\max\bigl\{|a^2|,|b|\bigr\},\\
\label{eqn:dVfPqa2b}
\d_W(P,Q) &= -\log\max\bigl\{|a|,|b|\bigr\}.
\end{align}
Proposition~\ref{proposition:distrel1} says that
\begin{equation}
\label{eqn:dVfPqeq2dWPQO1}
\d_V(\f (P),q) = 2\d_W(P,Q) + O(1),
\end{equation}
but by varying~$a$ and~$b$, we see from~\eqref{eqn:dVfPqab}
and~\eqref{eqn:dVfPqa2b} that the right-hand side
of~\eqref{eqn:dVfPqeq2dWPQO1} may be strictly larger than the
left-hand side. Indeed, for any fixed ~$1\le\kappa\le2$, we find that
\[
\d_V(\f (P),q)/\d_W(P,Q) \longrightarrow \kappa
\quad\text{as~$|b|=|a|^\kappa\to0$.}
\]
\end{example}

\begin{example}
\label{example:distrelcounterexample2}
We use Example~\ref{example:distrelcounterexample1} to indicate where
the proof of Proposition~\ref{proposition:distrel1}
in~\cite{silverman:arithdistfunctions} goes wrong.  For this example
only, we use notation from~\cite{silverman:arithdistfunctions}, which
may be slightly different from the notation used in the rest of this
paper. (See also Remark~\ref{remark:strictcontainment} for another
similar example.)
\par
The proof in~\cite{silverman:arithdistfunctions} begins with the case
that~$\f:W\to{V}$ is a Galois cover, say with Galois group
$\Aut(W/V)=\{\t_1,\ldots,\t_n\}$. It is then asserted that
\begin{equation}
  \label{eqn:ffpullbackdiagonal}
  (\f\times\f)^*\D(V) = \sum_{i=1}^n (1\times\t_i)^*\Delta(W).
\end{equation}
This equality of schemes is not correct.  Thus in
Example~\ref{example:distrelcounterexample1}, writing~$\gI(X)$ for the
ideal sheaf of a scheme~$X$, we have
\begin{align*}
  \gI \bigl( (\f\times\f)^*\D(V) \bigr) &= \bigl( (x_1y_2)^2-(y_2x_1)^2, z_1w_2-z_2w_1 \bigr), \\
  \gI \bigl( \D(W) + (1\times\t)^*\D(W) \bigr) &= (x_1y_2-y_2x_1, z_1w_2-z_2w_1)\\
  &\hspace{3em} {}\cdot(x_1y_2+y_2x_1, z_1w_2-z_2w_1).
\end{align*}
This gives an inclusion of ideals,
\[
\gI \bigl( \D(W) + \t^*\D(W) \bigr)
\subset
\gI \bigl( (\f\times\f)^*\D(V) \bigr),
\]
but the ideals are not equal, since for example the right-hand ideal
contains~$z_1w_2-z_2w_1$ and the left-hand ideal does not. Of course,
the underlying reduced schemes are the same, since
\[
\sqrt{\gI \bigl( \D(W) + \t^*\D(W) \bigr)}
=
\gI \bigl( (\f\times\f)^*\D(V) \bigr).
\]
In general, there is a
distribution inequality; see
Section~\ref{section:distributioninequality}.
\end{example}

\begin{remark}
\label{remark:ADRcorrect}
As shown by Example~\ref{example:distrelcounterexample1}, the
arithmetic distance relation described in
Proposition~\ref{proposition:distrel1} is incorrect in the stated
generality. There are, however, two important cases for which the
proof of Proposition~\ref{proposition:distrel1}
in~\cite{silverman:arithdistfunctions} is correct, and thus for which
the distribution relation and its application to the inverse function
theorem are valid.
\begin{parts}
  \Part{\textbullet}
  The varieties~$V$ and~$W$ are smooth and the map~$\f:W\to{V}$ is
  {\'e}tale. In this case there is no ramification, so the relevant
  ideal sheaves are automatically reduced
  \Part{\textbullet}
  The varieties~$V$ and~$W$ are smooth of dimension~$1$, in which case
  the ramification divisor is~$0$-dimensional.
\end{parts}
See Lemma~\ref{lem:curveetalecases} for the subscheme formula that is
key to proving the distribution relation in these two cases.
\end{remark}

%%%%%%%%%%%%%%%%%%%%%%%%%%%%%%%%%%%%%%%%%%%%%%%%%%%%%%%%%%%%%%%%%%%%%%
\section{Applications to quasi-integral points}
\label{section:quasiintergral} 
%%%%%%%%%%%%%%%%%%%%%%%%%%%%%%%%%%%%%%%%%%%%%%%%%%%%%%%%%%%%%%%%%%%%%%
Various quantitative versions of the inverse function theorem have
been used by the second
author~\cite{MR2782672,MR895285,silverman:dynamicalintegerpoints} to
study integral points.  In this section we briefly indicate the
relevance of the present paper to these earlier results.

The paper~\cite{MR895285} proves uniform height estimates for
quasi-$S$-integral points on families of elliptic curves, and more
generally on families of abelian varieties. The proof uses a method of
Siegel that involves taking the pull-back of the multiplication-by-$m$
map.  For the application in~\cite{MR895285}, one looks at an abelian
scheme, i.e., a family of abelian varieties~$A\to{T}$ over a not
necessarily complete base variety~$T$, and applies the inverse
function theorem to the multiplication-by-$m$ map~$[m]:A\to{A}$. The
map~$[m]$ is {\'e}tale, so as noted in Remark~\ref{remark:ADRcorrect},
the proofs of the distribution relation and the inverse function
theorem in~\cite{silverman:arithdistfunctions} are correct,
so~\cite{MR895285} does not require the present paper.

The paper~\cite{silverman:dynamicalintegerpoints} proves an analogue
of Siegel's integral point theorem for~$f$-orbits of points
in~$\PP^1$, where~$f\in\Qbar(z)$ is a rational map of degree at
least~$2$. A key step in the proof uses Siegel's pull-back idea, but
in this case the inverse function theorem is applied to an
iterate~$f^{\circ{n}}$ of~$f$. The map~$f^{\circ{n}}:\PP^1\to\PP^1$ is
highly ramified, but the inverse function theorem on~$\PP^1$ is much
easier than the general case, and a self-contained proof of the
required theorem is given in~\cite{silverman:dynamicalintegerpoints}.
Uniform versions of the results
in~\cite{silverman:dynamicalintegerpoints} were given by Hsia and the
second author~\cite{MR2782672}. The proof includes an application of
the inverse function theorem from~\cite{silverman:arithdistfunctions}
to the highly ramified iterates of a family of rational maps
\text{$f:\PP^1_T\to\PP^1_T$} over a base variety~$T$. Thus the inverse
function theorem references in~\cite{MR2782672} should be replaced by
references to the present paper.\footnote{We remark that it was noted
  explicitly in \cite[Section~3]{MR2782672} that ``it is undoubtedly
  possible to give a direct, albeit long and messy, proof of the
  desired [inverse function theorem] result.'' The proof of the
  inverse function theorem via Newton iteration in
  Section~\ref{section:invfunccompletefields} may be viewed as such a
  long, messy, and direct proof}

%%%%%%%%%%%%%%%%%%%%%%%%%%%%%%%%%%%%%%%%%%%%%%%%%%%%%%%%%%%%%%%%%%%%%%
\section{Distribution, separation, and the inverse function theorem}
\label{section:distribution-separation-inversefunction}
%%%%%%%%%%%%%%%%%%%%%%%%%%%%%%%%%%%%%%%%%%%%%%%%%%%%%%%%%%%%%%%%%%%%%%

In this section, we prove some estimates on arithmetic distance
functions involving inverse images by a finite morphisms.  The core
inequality is the distribution inequality describe in
Theorem~\ref{thm:distribution-inequality}.  We combine it with the
separation inequality in Proposition~\ref{prop:separation} to prove a
quantitative multivariable inverse function theorem; see
Theorem~\ref{thm:glinvfunc}.  This section thus provides a correction
to \cite[\S6]{silverman:arithdistfunctions}.

%%%%%%%%%%%%%%%%%%%%%%%%%%%%%%%%%%%%%%%%%%%%%%%%%%%%%%%%%%%%%%%%%%%%%%
\subsection{The distribution inequality/relation}
\label{section:distributioninequality}
%%%%%%%%%%%%%%%%%%%%%%%%%%%%%%%%%%%%%%%%%%%%%%%%%%%%%%%%%%%%%%%%%%%%%%

\begin{definition}
Let~$\f \colon W \longrightarrow V$ be a finite flat morphism between schemes of finite type over a field~$k$.
Let~$k'$ be an algebraically closed field containing~$k$.
For~$x \in W(k')$, define the \emph{multiplicity} of~$\f$ at~$x$ by
\[
e_{\f}(x) = {\rm length}_{\Ocal_{W_{k'}, x}} \Ocal_{W_{k'}, x}/\f^{*} \mathfrak{m}_{\f(x)}\Ocal_{W_{k'}, x} 
\] 
where~$W_{k'}=W\times_{\Spec{k}}\Spec{k'}$ and~$V_{k'}=V\times_{\Spec{k}}\Spec{k'}$, where~$x$ and~$\f(x)$ are closed points
of~$W_{k'}$ and~$V_{k'}$, respectively, and where~$\mathfrak{m}_{\f(x)}$ is
the maximal ideal of~$\Ocal_{V_{k'},\f(x)}$.  Note that
\[
e_{\f}(x) = \dim_{k'} \Ocal_{W_{k'}, x}/\f^{*} \mathfrak{m}_{\f(x)}\Ocal_{W_{k'}, x},
\]
since~$k'$ is algebraically closed.
If~$\f$ has constant degree~$d$, then for any~$y \in V(k')$, we have
\[
\sum_{x \in W(k'), \f(x) = y} e_{\f}(x) = d.
\]
\end{definition}

\begin{theorem}[Distribution inequality]\label{thm:distribution-inequality}
Let~$\f \colon W \longrightarrow V$ be a generically \'etale finite
flat morphism between quasi-projective geometrically integral
varieties over~$K$.
\begin{parts}
\Part{(1)}
For all~$(P, q, v) \in W(\Kbar) \times V(\Kbar) \times M(\Kbar)$, we have
\begin{align}
\label{eqn:distinequality}
  \delta_{V}&(\f(P),  q; v)  \notag\\
  & \leq \sum_{\substack{Q \in W(\Kbar)\\ \f(Q)=q\\}}
  e_\f(Q) \delta_{W}(P, Q; v) + O\bigl( \lambda_{\partial (W \times V)}(P,q;v)\bigr) + O(M_K).
\end{align}
\Part{(2)}
Suppose~$V$ and~$W$ are smooth.  Then~\eqref{eqn:distinequality} is an
equality in each of the following situations\textup:
\begin{parts}
  \Part{\textbullet}
  $\dim V= \dim W =1$.
  \Part{\textbullet}
  $\f$ is \'etale.
\end{parts}
\end{parts}
\end{theorem}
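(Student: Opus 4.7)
The plan is to realize both sides of~\eqref{eqn:distinequality} as local heights of closed subschemes of~$W$, reducing the inequality to a simple ideal-theoretic containment at the points of the fiber~$\f^{-1}(q)$. On the left, pulling back the diagonal $\Delta(V)\subset V\times V$ first by the slice embedding $\iota_q:R\mapsto(R,q)$ and then by~$\f$, and applying the pullback functoriality of subscheme local heights from~\cite{silverman:arithdistfunctions}, gives
\[
\delta_V(\f(P),q) = \lambda_{\{q\}}(\f(P))+O = \lambda_{\f^{-1}(q)}(P)+O(\lambda_{\partial(W\times V)}(P,q))+O(M_K),
\]
where $\f^{-1}(q)\subset W$ carries its natural scheme structure with ideal sheaf $\f^{*}\mathfrak{m}_q\cdot\Ocal_W$. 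On the right, the additivity $\lambda_{X+Y}=\lambda_X+\lambda_Y+O$ of local heights under the scheme sum $X+Y$ (whose ideal is $\Ical_X\Ical_Y$) from Section~2 shows that $\sum_Q e_\f(Q)\,\delta_W(P,Q)=\lambda_{Z_q}(P)+O$, where $Z_q\subset W$ is the closed subscheme with ideal sheaf $\prod_{Q\in\f^{-1}(q)}\Ical_{\{Q\}}^{e_\f(Q)}$.

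The heart of the argument is then the inclusion of ideal sheaves
\[
\prod_{Q\in\f^{-1}(q)} \Ical_{\{Q\}}^{e_\f(Q)} \;\subset\; \f^{*}\mathfrak{m}_q\cdot\Ocal_W,
\]
equivalently $\f^{-1}(q)\subset Z_q$ as closed subschemes of~$W$. I would check this stalk by stalk. Away from $\f^{-1}(q)$ the right-hand side is the unit ideal, so there is nothing to show. At $Q\in\f^{-1}(q)$, the defining identity $e_\f(Q)=\mathrm{length}\,\Ocal_{W,Q}/\f^{*}\mathfrak{m}_q\Ocal_{W,Q}$ presents this quotient as an artinian local ring of length~$e_\f(Q)$, and in such a ring the maximal ideal is nilpotent of index at most its length, yielding $\mathfrak{m}_Q^{e_\f(Q)}\subset\f^{*}\mathfrak{m}_q\Ocal_{W,Q}$. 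The monotonicity $X\subset Y\Rightarrow\lambda_X\le\lambda_Y+O$ of subscheme local heights (a consequence of the intersection axiom $\lambda_{X\cap Y}=\min\{\lambda_X,\lambda_Y\}+O$) then completes part~(1).

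For part~(2) it suffices to upgrade the stalkwise inclusion to the equality $\mathfrak{m}_Q^{e_\f(Q)}=\f^{*}\mathfrak{m}_q\Ocal_{W,Q}$, which forces $\f^{-1}(q)=Z_q$ as subschemes and hence equality of their local heights. In the \'etale case $e_\f(Q)=1$ and $\f^{*}\mathfrak{m}_q\Ocal_{W,Q}=\mathfrak{m}_Q$ by definition. In the smooth-curve case $\Ocal_{W,Q}$ is a DVR, so $\f^{*}\mathfrak{m}_q\Ocal_{W,Q}$ is a power of~$\mathfrak{m}_Q$ and a length count forces the exponent to be~$e_\f(Q)$. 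I expect the commutative-algebra input, that $\mathfrak{m}^e\subset I$ whenever $\dim_k R/I=e$ in a local artinian $k$-algebra, to be a one-line consequence of the definition of length; the only real delicacy will be the bookkeeping of the boundary term $O(\lambda_{\partial(W\times V)})$ as one transfers local heights among~$V$,~$W$, and~$W\times V$ via the machinery of~\cite{silverman:arithdistfunctions,htsclosedsubschemes2020}.
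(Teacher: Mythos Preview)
Your stalkwise ideal containment $\prod_Q \mathfrak{m}_Q^{e_\f(Q)}\subset\f^*\mathfrak{m}_q\cdot\Ocal_W$ is correct, and the artinian-length argument you sketch for it is fine. The gap is uniformity in~$q$. Each step of your argument---identifying $\delta_W(P,Q)$ with $\lambda_{\{Q\}}(P)$, assembling these into $\lambda_{Z_q}(P)$ via additivity, and finally invoking monotonicity $\lambda_{\f^{-1}(q)}\le\lambda_{Z_q}+O$---produces implied constants that depend on the particular subschemes $\{Q\}$, $Z_q$, $\f^{-1}(q)$ of~$W$, hence on~$q$. The theorem demands a single $O(\lambda_{\partial(W\times V)})+O(M_K)$ valid for all $(P,q,v)$ at once; what you are dismissing as ``bookkeeping of the boundary term'' is in fact the entire difficulty, and nothing in the proposal addresses it. There is no evident closed subscheme $\mathcal{Z}\subset W\times V$ whose local height equals $\sum_Q e_\f(Q)\,\delta_W(P,Q)$ uniformly (your family $q\mapsto Z_q$ is not flat over the branch locus), so one cannot simply promote the fiberwise comparison to a comparison of fixed subschemes of $W\times V$.

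The paper's proof resolves this by passing to a parameter space that carries all ordered fibers simultaneously. Via the Hilbert--Chow morphism one gets $\Phi\colon V\to W^{(d)}$, and the fiber product $Z=V\times_{W^{(d)}}W^d$ parametrizes ordered $d$-tuples $(Q_1,\dots,Q_d)$ lying over points of~$V$. On $W\times Z$ there are~$d$ fixed maps $\mu_i\colon W\times Z\to W\times W$, and the right-hand side of~\eqref{eqn:distinequality} becomes the local height of the single closed subscheme $\sum_{i=1}^d\mu_i^{-1}(\Delta_W)$, while the left-hand side is that of $T=(\f\times p)^{-1}(\Delta_V)$. The required inequality then follows from the scheme containment $T_{Z_{\reduced}}\subset\sum_i\mu_i^{-1}(\Delta_W)_{Z_{\reduced}}$, proved by checking that $T_{Z_{\reduced}}$ is reduced; equality in the curve and \'etale cases is established by showing both sides are reduced with the same support. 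Your fiberwise inclusion is exactly the shadow of this over a single point of~$Z$, but it is the family-level containment that delivers the uniform constants.
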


%%%%%%%%%%%%%%%%%%%%%%%%%%%%%%%%%%%%%%%%%%%%%%%%%%%%%%%%%%%%%%%%%%%%%%
\subsubsection{Inequalities between closed subschemes}
%%%%%%%%%%%%%%%%%%%%%%%%%%%%%%%%%%%%%%%%%%%%%%%%%%%%%%%%%%%%%%%%%%%%%%

We prove some containments between closed subschemes, from which we
deduce Theorem~\ref{thm:distribution-inequality}.  This can be done
over an arbitrary field~$k$, so we let~$\f\colon{W}\longrightarrow{V}$
be a generically \'etale finite flat morphism between quasi-projective
geometrically integral varieties over~$k$.

Let~$d=\deg\f$.
Since~$\f$ is flat, we see that
\[
W \xrightarrow{\;1\times\f\;} W\times V \xrightarrow{\;\;\operatorname{proj}_2\;\;} V
\]
is a flat family of zero-dimensional closed subschemes of~$W$ of
length~$d$.  Thus it defines a
morphism~$V\to\Hilb^{d}(W)$.  Let~$\Phi$ denote the
composite~$V\to\Hilb^{d}(W)\to W^{(d)}$, where
$W^{(d)}=W^{d}/S_{d}$ is the~$d$-times symmetric power of~$W$ and
at a geometric point~$y\in{V}$, we
have~$\Phi(y)=\{x_{1},\dots,x_{d}\}$, where $x_{1},\dots,x_{d}$ are
the points in the inverse image of~$y$ by~$\f$, listed with
multiplicity.
The second morphism is the Hilbert-Chow morphism. See Figure~\ref{figure:fg1}

\begin{figure}
\[
\xymatrix@C=60pt{
W \ar[d]_{(\id, \f)} & & \\
W \times V \ar[d]_{\pr_{2}} \ar[r] & W \times \Hilb^{d}(W) \ar[d] & \\
V \ar[r] \ar@/_20pt/[rr]_{\Phi} & \Hilb^{d}(W) \ar[r]^(.55){\text{Hilbert-Chow}} & W^{(d)}
}
\]
\caption{The map $\Phi$ that inverts $\f$}
\label{figure:fg1}
\end{figure}

We form the fiber product
\[
\xymatrix{
Z = V \times_{W^{(d)}} W^{d}  \ar[r]^(.7){\psi} \ar[d]_{p} & W^{d} \ar[d]^{\pi} \\
V \ar[r]_{\Phi} &  W^{(d)}
}
\]
where~$\pi$ is the quotient morphism.  We remark that~$Z$ need not be
either reduced or irreducible.

\begin{lemma}\label{lem:univopen}
The morphisms~$\pi$ and~$\f$ are universally open, i.e., every base
change of~$\pi$ and~$\f$ is an open map.
\end{lemma}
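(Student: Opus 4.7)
The lemma splits into two independent statements with rather different flavors. For the morphism $\f$, the plan is to invoke the classical theorem that any morphism locally of finite presentation which is flat is universally open (EGA IV, Corollaire~2.4.6). Since $\f\colon W \to V$ is finite and flat between schemes of finite type over the field $k$, it is automatically of finite presentation (the base is Noetherian), and so this theorem gives universal openness for $\f$ immediately, with no further work.

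For $\pi\colon W^d \to W^{(d)}$, I would argue instead via the group action, because finite morphisms are not universally open in general and $\pi$ need not be flat. Here $\pi$ is the quotient by the finite group $S_d$ acting on $W^d$ by permutation of the factors. Since $W$ is quasi-projective, so is $W^d$, and the geometric quotient $W^{(d)} = W^d/S_d$ exists as a quasi-projective scheme over $k$ with $\pi$ finite and surjective. The key topological observation is that $\pi$ carries the quotient topology: for any open $U \subset W^d$, the saturation
\[
\pi^{-1}(\pi(U)) \;=\; \bigcup_{\sigma \in S_d} \sigma(U)
\]
is a finite union of open sets, hence open, so $\pi(U)$ is open in $W^{(d)}$. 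This already shows that $\pi$ itself is an open morphism.

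To promote openness to universal openness, I would use that the formation of the quotient by a finite group action on a quasi-projective scheme is compatible with arbitrary base change: for any morphism $T \to W^{(d)}$, the projection $W^d \times_{W^{(d)}} T \to T$ is again the quotient by the induced $S_d$-action on the source. The saturation argument from the previous paragraph then applies verbatim to show that the base change of $\pi$ along $T \to W^{(d)}$ is open.

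The main obstacle is justifying cleanly that the symmetric quotient commutes with arbitrary base change. This is a standard but nontrivial fact, proved for instance in SGA~1, Exposé~V, Proposition~1.8 (or via the theory of universal categorical quotients in Mumford's GIT), and relies on $W$ being quasi-projective so that every $S_d$-orbit in $W^d$ is contained in an affine open. Once this commutation of quotient-with-base-change is granted, the proof for $\pi$ is complete.
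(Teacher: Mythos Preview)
Your argument for $\f$ is exactly the paper's: flat of finite presentation over a Noetherian base implies universally open.

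For $\pi$, your saturation idea is the right one and is what the paper uses, but your route to universal openness is heavier than necessary and rests on a claim that is not quite true as stated. You want to invoke that the finite-group quotient $W^d\to W^{(d)}$ commutes with \emph{arbitrary} base change as schemes; this fails in general when $|S_d|$ is not invertible (for example, in characteristic~$2$ the swap action on $\Spec k[x,y]$ has quotient $\Spec k[x+y,xy]$, and base change to the origin gives $\Spec k[x]/(x^2)$ with trivial $G$-action, whose invariant ring is not~$k$). The references you cite do not give this in the generality you need.

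The paper avoids this entirely by observing that $\pi$ is finite and surjective, hence universally closed and surjective, hence \emph{universally submersive}: every base change $\pi'\colon X\to Y$ is a topological quotient map. The group $S_d$ still acts on the source of any base change and still acts transitively on each geometric fibre (since the geometric fibres of $\pi'$ are those of $\pi$), so $\pi'^{-1}(\pi'(U))=S_d\cdot U$ is open, and submersivity gives that $\pi'(U)$ is open. This uses only the topological content of the quotient, not any scheme-theoretic compatibility, and is both shorter and valid in all characteristics. Your argument can be repaired simply by replacing ``the base change is again the quotient'' with ``the base change is finite surjective, hence submersive, and $S_d$ acts transitively on its fibres.''
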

\begin{proof}
Since~$\f$ is a flat morphism of finite type between Noetherian
schemes, it is universally open.

Note that~$\pi$ is finite surjective, and in particular it is
universally submersive, i.e., every base change~$\pi'\colon{X}\to{Y}$
of~$\pi$ is surjective and the topology on~$Y$ is the quotient
topology of~$X$.  Let~$U\subset{X}$ be an open subset.
Then~$\pi'^{-1}(\pi'(U))=S_{d}\cdot{U}$, since~$S_{d}$ acts
transitively on every geometric fiber of~$\pi'$.
Since~$S_{d}\cdot{U}$ is open, submersivity of~$\pi'$
implies~$\pi'(U)$ is also open.
\end{proof}

We consider the diagram in Figure~\ref{figure:fg2},
where~$\mu_{i}=(\id\times\pr_{i})\circ(\id\times\psi)$, and where all
of the squares are cartesian.

\begin{figure}
\[
\xymatrix{
&Z \ar@{^{(}->}[rd]_(.30){(\pr_{i}\circ \psi, \id)} \ar[r]^(.30){\sim} & \mu_{i}^{-1}( \Delta_{W}) \ar@{^{(}->}[d] \ar[rr]&& \Delta_{W} \ar@{^{(}->}[d] \\
T \ar[d]_{f} \ar@{^{(}->}[rr]_{ \iota} \ar@/_20pt/[dd]_{h} && W \times Z  \ar[d]^{\f \times \id} \ar[r]^{\id \times \psi} & W \times W^{d} \ar[r]^{\id \times \pr_{i}} & W \times W\\
Z \ar@{^{(}->}[rr]_{(p, \id)} \ar[d]_{g} && V \times Z \ar[d]^{\id \times p} & &\\
\Delta_{V} \ar@{^{(}->}[rr] && V \times V &&
}
\]
\caption{Fitting together all of the spaces and maps}
\label{figure:fg2}
\end{figure}

\begin{lemma}\
\begin{enumerate}\label{lem:firstpropdiag}
\item
  \label{lem:firstpropdiag:support}
  $\Support( T )= \Support \Bigl( \sum_{i=1}^{d} \mu_{i}^{-1}( \Delta_{W}) \Bigr)$.
\item
  \label{lem:firstpropdiag:R0}
  Every irreducible component of~$T$ dominates~$ \Delta_{V}$ via~$h$
  and~$T$ is generically reduced, i.e., satisfies the condition \textup{(R0)}.
\item
  \label{lem:firstpropdiag:dominate}
  For every irreducible component~$E$ of~$W\times{Z}$, we
  have~$(\f\times{p})(E)=V\times{V}$ and~$\mu_{i}(E)=W\times{W}$ as
  sets.
\item
  \label{lem:firstpropdiag:intersection}
  If~$(x,z)\in(\f\times{p})^{-1}(\Delta_{V})(\kbar)$ is contained in
  exactly~$r$ of the sets
  \[
  \mu_{1}^{-1}( \Delta_{W})(\kbar),\, \dots,\, \mu_{d}^{-1}( \Delta_{W})(\kbar),
  \]
  then~$e_{\f}(x) =r$.
\end{enumerate}
\end{lemma}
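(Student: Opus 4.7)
The plan is to address the four parts in order, using set-theoretic arguments for~(1) and~(4), and exploiting Lemma~\ref{lem:univopen} together with generic étaleness of~$\f$ for~(2) and~(3). For~(1) and~(4), I would unwind the fiber product definitions. A geometric point of~$W\times Z$ is a pair~$(w,z)$ where~$z$ corresponds to $\bigl(y,(x_{1},\dots,x_{d})\bigr)$ with $y=p(z)$ and $\psi(z)=(x_{1},\dots,x_{d})$ lying above~$\Phi(y)$, so that $\{x_{1},\dots,x_{d}\}$ equals the $0$-cycle $\f^{-1}(y)$ counted with multiplicity. Then $(w,z)\in T$ iff $\f(w)=p(z)=y$, iff $w$ belongs to the set-theoretic fiber, iff $w=\pr_{i}(\psi(z))$ for some~$i$, iff $(w,z)\in\mu_{i}^{-1}(\Delta_{W})$ for some~$i$; this proves~(1). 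Moreover, for such $(x,z)$ with $\f(x)=p(z)=y$, the number~$r$ of indices~$i$ with $x=x_{i}$ equals the multiplicity of~$x$ in the $0$-cycle $\f^{-1}(y)$, which is exactly $e_{\f}(x)$; this proves~(4).

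For~(2), Lemma~\ref{lem:univopen} gives universal openness of~$\pi$, hence openness of its base change $p\colon Z\to V$. Being finite, $p$ is also closed. For each irreducible component~$Z_{j}$ of~$Z$, the subset $U_{j}=Z_{j}\setminus\bigcup_{k\neq j}Z_{k}$ is open and non-empty in~$Z$, so $p(U_{j})\subset V$ is open and non-empty; by irreducibility of~$V$, $p(U_{j})$ is dense, and then $p(Z_{j})$, being closed and containing~$p(U_{j})$, equals~$V$. Since $f\colon T\to Z$ is the base change of the flat morphism~$\f$ it is flat, so generic points of~$T$ are sent to generic points of~$Z$; combining the two observations, every irreducible component of~$T$ dominates~$\Delta_{V}$ via~$h$. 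For generic reducedness, at the generic point $\eta\in V$ the morphism~$\f$ is étale, so $W_{\eta}$ is the spectrum of a separable field extension of $k(\eta)$, and therefore $\Phi(\eta)$ lies over the open locus of~$W^{(d)}$ consisting of configurations with trivial $S_{d}$-stabilizer, where~$\pi$ is étale. Hence $Z_{\eta}$, and thus $T_{\eta}=W_{\eta}\times_{k(\eta)}Z_{\eta}$, is étale over~$\eta$; in particular $T_{\eta}$ is reduced, and by the component dominance just proved the generic points of~$T$ all lie in~$T_{\eta}$, making~$T$ generically reduced.

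For~(3), since~$W$ is geometrically integral, every irreducible component of $W\times Z$ has the form $E=W\times Z_{j}$ for some component~$Z_{j}$ of~$Z$. Then $(\f\times p)(E)=\f(W)\times p(Z_{j})=V\times V$ by~(2). Moreover, $\pr_{i}\circ\psi\colon Z\to W$ is a $V$-morphism between finite $V$-schemes and is therefore itself finite, hence closed; so $(\pr_{i}\circ\psi)(Z_{j})$ is closed in~$W$. At the generic point~$\eta$, $W_{\eta}=\Spec L$ for a separable field extension $L/k(\eta)$, and $Z_{j}|_{\eta}$ corresponds to a single $\Gal\bigl(\widetilde{L}/k(\eta)\bigr)$-orbit of orderings of $\Hom_{k(\eta)}(L,\widetilde{L})$. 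The $i$-th coordinate of this orbit is the Galois orbit of a single embedding, which by transitivity (coming from irreducibility of~$W$) equals all of $\Hom_{k(\eta)}(L,\widetilde{L})=W_{\eta}(\widetilde{L})$. Thus $(\pr_{i}\circ\psi)(Z_{j}|_{\eta})=W_{\eta}$, and since~$W_{\eta}$ is dense in the irreducible variety~$W$, the closed image $(\pr_{i}\circ\psi)(Z_{j})$ equals~$W$; hence $\mu_{i}(E)=W\times W$. The main obstacle is the dominance step in~(2): rather than attempting a direct flatness analysis of $p\colon Z\to V$ (which seems delicate because $\pi$ need not be flat for $\dim W\geq 2$), I resolve it by combining universal openness of~$\pi$ from Lemma~\ref{lem:univopen} with irreducibility of~$V$.
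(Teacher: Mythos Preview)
Your proof is correct. Parts~(1), (4), and the generic-reducedness half of~(2) match the paper's argument (which simply says ``follows from the construction'' for~(1) and~(4), and uses that~$\f\times p$ is \'etale over~$U\times U$ for a dense open~$U\subset V$ where~$\f$ is \'etale).

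Where you diverge is in the dominance arguments in~(2) and~(3). The paper is more direct: it observes that~$h$ itself is open (being a composite of base changes of~$\f$ and~$\pi$, both universally open by Lemma~\ref{lem:univopen}), so every component of~$T$ dominates the irreducible target~$\Delta_V$; and for~(3) it uses the commutative triangle $(\f\times\f)\circ\mu_i=\f\times p$ to deduce $(\f\times\f)(\mu_i(E))=V\times V$, whence $\mu_i(E)$ has full dimension in the irreducible~$W\times W$ and, being closed (since $\pr_i\circ\psi$ is finite, as you also note), equals~$W\times W$. You instead route through the decomposition $E=W\times Z_j$, establish $p(Z_j)=V$ separately, and then run an explicit Galois-orbit analysis at the generic point to show $(\pr_i\circ\psi)(Z_j)=W$. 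Both arguments are valid; the paper's is shorter and avoids unpacking the Galois structure of~$Z_\eta$, while yours makes the fiberwise picture more transparent and gives the intermediate fact $p(Z_j)=V$ explicitly, which is not stated separately in the paper.
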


\begin{proof}
\eqref{lem:firstpropdiag:support}\enspace This follows from the
construction.  \par\noindent\eqref{lem:firstpropdiag:R0}\enspace By
Lemma~\ref{lem:univopen},~$h$ is an open map, and therefore every
irreducible component of~$T$ dominates~$ \Delta_{V}$.
Let~$U\subset{V}$ be a dense open subset over which~$\f$ is \'etale.
Then $\f\times{p}$ is \'etale over~$U\times{U}$.  This implies~$h$ is
\'etale over the dense open subset~$\Delta_{U}\subset\Delta_{V}$,
and hence~$T$ is generically reduced.
\par\noindent\eqref{lem:firstpropdiag:dominate}\enspace
By Lemma~\ref{lem:univopen}, the map~$\f\times{p}$ is an open map. It
is also finite, and hence we get~$(\f\times{p})(E)=V\times{V}$.  The
last statement follows from this and the following commutative
diagram:
\[
\xymatrix{
W\times Z \ar[r]^{\mu_{i}} \ar[d]_{\f \times p} & W \times W \ar[ld]^{\f \times \f} \\
V \times V &.
}
\]
\par\noindent\eqref{lem:firstpropdiag:intersection}\enspace
This follows from the construction.
\end{proof}

We next consider the diagram given in Figure~\ref{figure:fg3}, where
we view all of the schemes as schemes over the~$Z$ at the bottom of
the diagram.  Let~$j\colon{Z}_{\reduced}\to{Z}$ be the reduced scheme.
Base change of the diagram in Figure~\ref{figure:fg3} along the
morphism~$j$ gives the diagram in Figure~\ref{figure:fg4}, where the
subscripts~$(-)_{Z_{\reduced}}$ stands for base change
to~$Z_{\reduced}$.

\begin{figure}[ht]
 \[
\xymatrix{
&Z \ar@{^{(}->}[rd]_(.30){(\pr_{i}\circ \psi, \id)} \ar[r]^(.30){\sim} & \mu_{i}^{-1}( \Delta_{W}) \ar@{^{(}->}[d] \\
T \ar[d]_{f} \ar@{^{(}->}[rr]_{ \iota} && W \times Z  \ar[d]^{\f \times \id}\\
Z \ar@{^{(}->}[rr]_{(p, \id)}  && V \times Z \ar[d]^{\pr_{2}}\\
&& Z
}
\]
\caption{Another commutative diagram}
\label{figure:fg3}
\end{figure}

\begin{figure}[ht]
\begin{align}\label{diagram:reduction}
\xymatrix{
&Z_{\reduced} \ar@{^{(}->}[rd]_(.30){(\pr_{i}\circ \psi \circ j, \id)} \ar[r]^(.30){\sim} & \mu_{i}^{-1}( \Delta_{W})_{Z_{\reduced}} \ar@{^{(}->}[d] \\
T_{Z_{\reduced}} \ar[d]_{f_{Z_{\reduced}}} \ar@{^{(}->}[rr]_{ \iota_{Z_{\reduced}}} && W \times Z_{\reduced}  \ar[d]^{\f \times \id}\\
Z_{\reduced} \ar@{^{(}->}[rr]_{(p\circ j, \id)}  && V \times Z_{\reduced} \ar[d]^{\pr_{2}}\\
&& Z_{\reduced}
}
\end{align}
\caption{Base change to $Z_{\reduced}$}
\label{figure:fg4}
\end{figure}

\begin{lemma}\label{lem:generalcontainment}
The scheme~$T_{Z_{\reduced}}$ is reduced, and in particular, we have
\[
T_{Z_{\reduced}} \subset \sum_{i=1}^{d} \mu_{i}^{-1}( \Delta_{W})_{Z_{\reduced}}
\]
as closed subschemes of~$W \times Z_{\reduced}$.
\end{lemma}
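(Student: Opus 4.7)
The plan is to reduce the subscheme containment to proving that $T_{Z_{\reduced}}$ is reduced, and then verify reducedness via Serre's criterion $R_{0}+S_{1}$. For the reduction, I would combine Lemma~\ref{lem:firstpropdiag}(1) with the fact that $j\colon Z_{\reduced}\to Z$ is a homeomorphism of underlying topological spaces to conclude that $T_{Z_{\reduced}}$ and $\sum_{i=1}^{d}\mu_{i}^{-1}(\Delta_{W})_{Z_{\reduced}}$ have the same support in $W\times Z_{\reduced}$; once $T_{Z_{\reduced}}$ is known to be reduced, it is automatically contained in any closed subscheme with the same support, giving the desired inclusion.

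For property $S_{1}$, note that $T_{Z_{\reduced}} = W\times_{V} Z_{\reduced}$ and that its projection to $Z_{\reduced}$ is a base change of the finite flat morphism $\f$, hence itself finite and flat. Since $Z_{\reduced}$ is reduced (hence $S_{1}$) and the zero-dimensional fibers trivially satisfy $S_{1}$, the standard result on flat morphisms of Noetherian schemes yields $S_{1}$ for $T_{Z_{\reduced}}$.

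For property $R_{0}$, going-down for the flat morphism $T_{Z_{\reduced}}\to Z_{\reduced}$ ensures that each generic point $\xi$ of $T_{Z_{\reduced}}$ lies over the generic point $\eta_{C}$ of some irreducible component $C$ of $Z$. The key topological claim is that $p(\eta_{C}) = \eta_{V}$. To see this, by Lemma~\ref{lem:univopen} the morphism $p\colon Z\to V$ is open (as a base change of the universally open $\pi$), so the nonempty open subset $U = C\setminus\bigcup_{C'\neq C} C'$ of $Z$ has open image $p(U)\subset V$. Since $V$ is irreducible, $p(U)$ must contain $\eta_{V}$, and since $p(U)\subset p(C)$ with $p(C)$ closed, this forces $p(C)=V$ and hence $p(\eta_{C})=\eta_{V}$. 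Because $\f$ is generically étale, $\f$ is étale at $\eta_{V}$, so the fiber of $T_{Z_{\reduced}}\to Z_{\reduced}$ over $\eta_{C}$ is finite étale over the field $\kappa(\eta_{C})$, and the local ring of $T_{Z_{\reduced}}$ at $\xi$ is a localization of a finite product of separable field extensions at one of its primes, hence a field. This establishes $R_{0}$; combined with $S_{1}$ it yields reducedness of $T_{Z_{\reduced}}$.

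The main technical obstacle is the topological step showing that every irreducible component of $Z$ dominates $V$, which is exactly where the universal openness of $\pi$ from Lemma~\ref{lem:univopen} becomes essential; without it a generic point of a component of $Z$ could map to a non-generic point of $V$ where $\f$ might fail to be étale, and the fiber argument producing reduced local rings would break down.
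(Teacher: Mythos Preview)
Your proof is correct and follows essentially the same strategy as the paper's: reduce to reducedness of $T_{Z_{\reduced}}$ via the support equality, then verify Serre's criterion $R_0+S_1$ using flatness of the base-changed $\f$ for $S_1$ and universal openness plus generic \'etaleness of $\f$ for $R_0$. The only organizational difference is that the paper obtains $R_0$ by citing Lemma~\ref{lem:firstpropdiag}(\ref{lem:firstpropdiag:R0}) (which already proved $T$ is generically reduced via the open map $h\colon T\to\Delta_V$) and then observing that $T_{Z_{\reduced}}\to T$ is a thickening, whereas you rederive $R_0$ directly for $T_{Z_{\reduced}}$ by going through the components of $Z$ and the open map $p\colon Z\to V$; the underlying mechanism is the same.
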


\begin{proof}
Lemma~\ref{lem:firstpropdiag}\eqref{lem:firstpropdiag:support} tells
us that these two closed subschemes have the same the support, so it
is enough to show that~$T_{Z_{\reduced}}$ is reduced.

First, note that~$T_{Z_{\reduced}}\to{T}$ is a thickening of schemes.
Thus by Lemma~\ref{lem:firstpropdiag}\eqref{lem:firstpropdiag:R0}, the
scheme~$T_{Z_{\reduced}}~$ satisfies~(R0).  By the
diagram~\eqref{diagram:reduction}, the map~$f_{Z_{\reduced}}$ is the
base change of~$\f\times\id$ and therefore it is flat finite.
Since~$Z_{\reduced}$ is reduced, it satisfies the condition~(S1), and
therefore~$T_{Z_{\reduced}}$ also satisfies~(S1).
Thus~$T_{Z_{\reduced}}$ is~(R0) and~(S1), which is equivalent to being
reduced.
\end{proof}

In general,~$Z$ is not reduced, and the
containment~$T\subset\sum_{i=1}^{d}\mu_{i}^{-1}( \Delta_{W})$ is not
true.  However, with some additional assumptions, we can show that
these closed subschemes are equal.

\begin{lemma}
\label{lem:curveetalecases}
\begin{parts}
  \Part{(1)}
  If~$\f \times p$ is flat, then~$W \times Z$ and~$T$ are reduced.
  \Part{(2)}
 If~$\dim W= \dim V=1$ and~$W, V$ are smooth, then~$\f \times p$ is flat and 
\[
T=\sum_{i=1}^{d}\mu_{i}^{-1}( \Delta_{W}).
\]
\Part{(3)}
 If~$V$ is smooth and~$\f$ is \'etale, then~$\f \times p$ is \'etale and 
\[
T=\sum_{i=1}^{d}\mu_{i}^{-1}( \Delta_{W}).
\]
\end{parts}
\end{lemma}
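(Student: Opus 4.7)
The plan is to establish (1) first, then to deduce (2) and (3) by verifying the flatness (respectively {\'e}taleness) hypothesis in each case and supplementing with a local ideal computation.

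For part (1), the central observation is that flatness of $\f\times p$ forces $p$ itself to be flat, via faithfully flat descent applied to the factorization $\f\times p=(\id_{V}\times p)\circ(\f\times\id_{Z})$. Since $\f$ is finite flat and surjective, $\f\times\id_{Z}$ is faithfully flat, so flatness of $\f\times p$ descends to flatness of $\id_{V}\times p$; descending once more along the faithfully flat first projection $V\times V\to V$ then gives flatness of $p$. Since $\f$ is generically {\'e}tale, $\Phi(\eta_{V})$ lies in the locus of $W^{(d)}$ parametrizing tuples with distinct entries, where $\pi$ is {\'e}tale, so the generic fiber of $p$ is {\'e}tale and in particular reduced. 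Flatness of $p$, reducedness of the integral base $V$, and reducedness of the generic fiber together yield that $Z$ is reduced (using torsion-freeness of flat modules over the domain $\Ocal_{V,v}$). Because $W$ is geometrically reduced, $W\times_{k}Z$ is then reduced, and Lemma~\ref{lem:generalcontainment} gives reducedness of $T=T_{Z_{\reduced}}$.

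For part (2), I will establish flatness of $\f\times p$ via miracle flatness. Working locally at $v\in V$ with uniformizer $t$ and preimages $Q_{1},\dots,Q_{r}$ of ramification indices $e_{1},\dots,e_{r}$, the map $\Phi$ becomes the closed embedding of a neighborhood of $v$ into $\prod_{i}\Sym^{e_{i}}(W)\cong\prod_{i}\AA^{e_{i}}$ sending $t$ to the elementary symmetric function coefficients of $w^{e_{i}}-t/u_{i}$ in the $i$-th factor. This realizes $V$ as a regular embedding of codimension $d-1$ in the smooth scheme $W^{(d)}$, and pulling back along $\pi$ shows that $Z\subset W^{d}$ is locally cut out by $d-1$ equations, hence is a local complete intersection in the smooth ambient $W^{d}$ and is Cohen-Macaulay of dimension~$1$. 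Therefore $W\times Z$ is Cohen-Macaulay of dimension~$2=\dim(V\times V)$, and since $V\times V$ is regular and $\f\times p$ is finite, miracle flatness gives that $\f\times p$ is flat. Part~(1) then yields reducedness of $W\times Z$ and $Z$. To identify the ideals of $T$ and $\sum_{i}\mu_{i}^{-1}(\Delta_{W})$, I will compute them locally: if $g(w)\in\Ocal_{V}[w]$ is a local expression of $\f$ with unit leading coefficient $c_{d}$, the ideal of $T$ is $(g(w)-p^{*}(t))$, while the ideal of the sum is $\bigl(\prod_{i}(w-\tilde{w}_{i}(z))\bigr)$ where $\tilde{w}_{i}(z)$ are the coordinates of $\psi(z)\in W^{d}$. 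By the fiber-product definition of $Z$, the $\tilde{w}_{i}(z)$ are the roots of $g(w)-p^{*}(t)$ in $\Ocal_{Z}[w]$, so the two generators agree up to the unit $c_{d}$ and the ideals coincide.

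For part (3), I will first show that $p$ is {\'e}tale: since $\f$ is {\'e}tale, $\Phi(V)$ lies in the open locus $U\subset W^{(d)}$ of configurations of $d$ distinct points, over which $S_{d}$ acts freely on $\pi^{-1}(U)$ and $\pi$ is an {\'e}tale Galois cover. Hence $p$, being the base change of this {\'e}tale map along $\Phi$, is {\'e}tale, and $\f\times p=(\id_{V}\times p)\circ(\f\times\id_{Z})$ is a composition of {\'e}tale base changes, hence is {\'e}tale and in particular flat. Part~(1) applies. For the subscheme equality, near each $z\in Z$ the coordinates $\tilde{w}_{i}(z)$ are pairwise distinct, so the subschemes $\mu_{i}^{-1}(\Delta_{W})\cong Z$ are pairwise disjoint, and $\sum_{i}\mu_{i}^{-1}(\Delta_{W})$ is their disjoint union, finite {\'e}tale of degree $d$ over $Z$; likewise $T$ is finite {\'e}tale of degree $d$ over $Z$. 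The containment $T\subset\sum_{i}\mu_{i}^{-1}(\Delta_{W})$ from Lemma~\ref{lem:generalcontainment} combined with matching fibers over $Z$ and flatness forces equality.

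The main obstacle will be the local analysis in part~(2): verifying via the explicit symmetric-function model that $\Phi$ is a regular embedding of codimension $d-1$, and computing the ideals of $T$ and $\sum_{i}\mu_{i}^{-1}(\Delta_{W})$ precisely enough to see that they differ only by the unit $c_{d}$. Part~(3) is considerably simpler because the {\'e}tale hypothesis sidesteps ramification-theoretic complications.
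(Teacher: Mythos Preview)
Your proposal is correct, but parts~(1) and~(2) take a substantially more laborious route than the paper's. For~(1), the paper bypasses the descent to $p$ entirely: the (R0) condition for both $W\times Z$ and $T$ was already established in Lemma~\ref{lem:firstpropdiag}\eqref{lem:firstpropdiag:R0}, and finite flatness over the integral bases $V\times V$ and $\Delta_V$ immediately gives~(S1), yielding reducedness directly. For~(2), the paper's flatness argument is one line---since $W$ is a smooth curve, $W^{(d)}$ is smooth and $\pi\colon W^d\to W^{(d)}$ is flat, so $p$ is flat by base change---which replaces your local regular-embedding analysis of~$\Phi$. More strikingly, for the subscheme equality the paper avoids any ideal computation: since $W\times Z$ is Cohen--Macaulay, each $\mu_i^{-1}(\Delta_W)$ is Cartier in $W\times Z$ by Lemma~\ref{lem:firstpropdiag}\eqref{lem:firstpropdiag:dominate}, so their sum is~(S1); since the summands $\mu_i^{-1}(\Delta_W)\cong Z$ are reduced and share no irreducible components (Lemma~\ref{lem:firstpropdiag}\eqref{lem:firstpropdiag:intersection}), the sum is generically reduced, hence reduced; and two reduced closed subschemes with the same support (Lemma~\ref{lem:firstpropdiag}\eqref{lem:firstpropdiag:support}) coincide. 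Your explicit factorization $g(w)-p^*(t)=c_d\prod_i(w-\tilde w_i)$ is valid once $Z$ is known to be reduced, but it relies on a local monogenic presentation of $\Ocal_W$ over $\Ocal_V$ and Weierstrass-type preparation that the paper's reducedness argument renders unnecessary. Part~(3) essentially matches the paper's argument.
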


\begin{proof}
(1)\enspace
By the same argument in the proof of
Lemma~\ref{lem:firstpropdiag}~\eqref{lem:firstpropdiag:R0},~$W\times{Z}$
and~$T$ are (R0).  By the assumption,~$W\times{Z}$ and~$T$ are flat
over~$V\times{V}$ and~$ \Delta_{V}$ respectively.  This implies they
are (S1) and we are done.
\par\noindent(2)\enspace
Since~$W$ is a smooth curve, the symmetric power~$W^{(d)}$ is smooth
and~$\pi\colon{W}^{d}\to{W}^{(d)}$ is flat.  This
implies~$p\colon{Z}\to{V}$ is flat and therefore~$\f\times{p}$ is
flat.  Since~$V \times V$ is a smooth surface, it is in particular
Cohen-Macaulay, and the scheme~$W \times Z$, which is flat finite
over~$V\times{V}$, is also Cohen-Macaulay.  By the reducedness
of~$W\times{Z}$ and
Lemma~\ref{lem:firstpropdiag}~\eqref{lem:firstpropdiag:dominate},
$\mu_{i}^{-1}(\Delta_{W})\subset{W}\times{Z}$ are effective Cartier
divisors.  Thus~$\sum_{i=1}^{d} \mu_{i}^{-1}( \Delta_{W})$ is also an
effective Cartier divisor and in particular it is (S1).  Since
$\mu_{i}^{-1}(\Delta_{W})\simeq{Z}$ are reduced and any two of them do
not have common irreducible components
(cf.\ Lemma~\ref{lem:firstpropdiag}~\eqref{lem:firstpropdiag:intersection}),
$\sum_{i=1}^{d} \mu_{i}^{-1}( \Delta_{W})$ is generically reduced.
This proves~$\sum_{i=1}^{d} \mu_{i}^{-1}( \Delta_{W})$ is reduced and
we are done.
\par\noindent(3)\enspace
If~$\f$ is \'etale,~$\pi$ is \'etale at every point over~$ \Phi(V)$.
Thus~$p$ is \'etale, and therefore~$\f \times p$ is \'etale.  In
particular,~$\mu_{i}^{-1}(\Delta_{W})\simeq{Z}$ are reduced.  Since
$\f$ is \'etale,~$\mu_{i}^{-1}( \Delta_{W})$ are disjoint
(cf.\ Lemma~\ref{lem:firstpropdiag}~\eqref{lem:firstpropdiag:intersection})
and therefore~$\sum_{i=1}^{d} \mu_{i}^{-1}( \Delta_{W})$ is reduced
and we are done.
\end{proof}

%%%%%%%%%%%%%%%%%%%%%%%%%%%%%%%%%%%%%%%%%%%%%%%%%%%%%%%%%%%%%%%%%%%%%%
\subsubsection{Proof of Distribution inequality}
%%%%%%%%%%%%%%%%%%%%%%%%%%%%%%%%%%%%%%%%%%%%%%%%%%%%%%%%%%%%%%%%%%%%%%

\begin{proof}[Proof of Theorem $\ref{thm:distribution-inequality}$]
We consider the diagram in Figure~\ref{figure:fg5}, where
\text{$\mu_{i}=(\id\times\pr_{i})\circ(\id\times\psi)$} and all the
squares are cartesian.  Note that \text{$W\times{Z}_{\reduced}$} is
reduced since~$W$ is geometrically integral.  By
Lemma~\ref{lem:firstpropdiag}\eqref{lem:firstpropdiag:dominate},
$(\id\times{j})\circ(\f\times{p})$ maps all associated points
of~$W\times{Z}_{\reduced}$ to the generic point of~$V\times{V}$,
and~$\mu_{i}$ maps them to the generic point of~$W\times{W}$.
Therefore we get
\begin{align*}
  \delta_{V}  \circ (\f \times (p\circ j))
  &=( \lambda_{ \Delta_{V}} + O( \lambda_{\partial (V \times V)})) \circ (\f \times (p\circ j)) \\
  & = \lambda_{T_{Z_{\reduced}}} + O( \lambda_{\partial (W \times Z_{\reduced})})
\end{align*}
and
\begin{align*}
\sum_{i=1}^{d} \delta_{W} \circ \mu_{i} \circ (\id \times j)
&= \sum_{i=1}^{d} (\lambda_{ \Delta_{W}}+O( \lambda_{\partial (W\times W)}))\circ \mu_{i} \circ (\id \times j)\\
&=\sum_{i=1}^{d} \lambda_{\mu_{i}^{-1}( \Delta_{W})_{Z_{\reduced}}} + O( \lambda_{\partial (W \times Z_{\reduced})})\\
&= \lambda_{\sum_{i=1}^{d} \mu_{i}^{-1}( \Delta_{W})_{Z_{\reduced}}} + O( \lambda_{\partial (W \times Z_{\reduced})}).
\end{align*}

\begin{figure}[t]
 \[
\xymatrix{
&\mu_{i}^{-1}( \Delta_{W})_{Z_{\reduced}} \ar@{^{(}->}[d] \ar[rd]& & & \\
T_{Z_{\reduced}} \ar@{^{(}->}[r] \ar[rd] & W \times Z_{\reduced} \ar[rd]_{\id \times j} & \mu_{i}^{-1}( \Delta_{W}) \ar@{^{(}->}[d] \ar[rr] && \Delta_{W} \ar@{^{(}->}[d] \\
&T \ar[d]_{h} \ar@{^{(}->}[r] & W \times Z  \ar[d]^{\f \times p} \ar[r]^{\id \times \psi} & W \times W^{d} \ar[r]^{\id \times \pr_{i}} & W \times W \\
&\Delta_{V} \ar@{^{(}->}[r]& V \times V 
}
\]
\caption{Yet another commutative diagram}
\label{figure:fg5}
\end{figure}

Lemma~\ref{lem:generalcontainment} tells us that
\[
\lambda_{T_{Z_{\reduced}}}  \leq \lambda_{\sum_{i=1}^{d} \mu_{i}^{-1}( \Delta_{W})_{Z_{\reduced}}} + O( \lambda_{\partial (W \times Z_{\reduced})}).
\]
This implies 
\[
\delta_{V}  \circ (\f \times (p\circ j))  \leq \sum_{i=1}^{d} \delta_{W} \circ \mu_{i} \circ (\id \times j) + O( \lambda_{\partial (W \times Z_{\reduced})}).
\]
Here note that~$p\circ j$ is a finite morphism, and in particular it
is a proper morphism, so we have
\[
\lambda_{\partial (W \times Z_{\reduced})} \gg \ll \lambda_{\partial (W \times V)} \circ (\id \times (p\circ j)).
\] 
Thus we get
\begin{equation}
  \label{eqn:dvfpjledw}
\delta_{V} \circ (\f \times (p\circ j)) \leq \sum_{i=1}^{d} \delta_{W}
\circ \mu_{i} \circ (\id \times j) + O(\lambda_{\partial (W \times V)}
\circ (\id \times (p\circ j))).
\end{equation}
Now take arbitrary~$(P, q, v) \in W(\Kbar) \times V(\Kbar) \times
M(\Kbar)$.  Take a point~$z \in Z_{\reduced}(\Kbar)$ such
that~$p\circ j(z) = q$.  Then the definition of~$Z$ tells us
that~$\psi\circ j(z)$ is a point of the form
\[
(Q_{1},\dots , Q_{d}) \in W(\Kbar)^{d}
\]
where~$Q_{i}$'s are preimages of~$q$ by~$\f$ counted with
multiplicity.  Plugging the point~$(P, z)$ into~\eqref{eqn:dvfpjledw},
we get
\begin{align*}
\delta_{V}(\f(P),q;v ) &\leq \sum_{i=1}^{d} \delta_{W}(P, Q_{i};v) + O( \lambda_{\partial (W \times V)}(P, q;v))\\
&= \sum_{\substack{Q \in W(\Kbar)\\\f(Q)=q\\}} e_{\f}(Q) \delta_{W}(P, Q; v) + O( \lambda_{\partial (W \times V)}(P,q;v)).
\end{align*}
\par
For the second statement, by Lemma~\ref{lem:curveetalecases}, if~$W,V$
are smooth, and if either they have dimension~$1$ or if~$\f$ is
\'etale, then we have the equality
\[
\lambda_{T_{Z_{\reduced}}}  = \lambda_{\sum_{i=1}^{d} \mu_{i}^{-1}( \Delta_{W})_{Z_{\reduced}}} + O( \lambda_{\partial (W \times Z_{\reduced})}).
\]
Thus by the same argument, we get the desired equality.
\end{proof}

%%%%%%%%%%%%%%%%%%%%%%%%%%%%%%%%%%%%%%%%%%%%%%%%%%%%%%%%%%%%%%%%%%%%%%
\subsection{The separation inequality}
\label{section:separationinequality}
%%%%%%%%%%%%%%%%%%%%%%%%%%%%%%%%%%%%%%%%%%%%%%%%%%%%%%%%%%%%%%%%%%%%%%

In this section we quantify the assertion that distinct inverse image
points cannot be too close to one another.

\begin{proposition}[Separation]\label{prop:separation}
Let~$W$ and~$V$ be quasi-projective varieties over~$K$, and let
$\f\colon{W}\to{V}$ be a generically \'etale, generically finite
morphism.  Let~$\Ical$ be the annihilator ideal sheaf
of~$\Omega_{W/V}$.  Then for~$v\in M(\Kbar)$ and for all
points~$Q,Q'\in{W}(\Kbar)$ such that~$\f(Q)=\f(Q')$ and~$Q\neq{Q'}$\textup:
\begin{parts}
\Part{(1)}
In general we have
\[
\d_{W}(Q,Q';v) \leq \l_{\Ical}(Q;v) + O( \lambda_{\partial (W \times W)}(Q,Q';v)) + O(M_K).
\]
\Part{(2)}
If~$\f$ is proper, then 
\[
\d_{W}(Q,Q';v) \leq \l_{\Ical}(Q;v) + O( \lambda_{\partial V}(\f(Q);v)) + O(M_K).
\]
\Part{(3)}
Let~$\Fit_{0}(\Omega_{W/V})$ be the~$0$-th Fitting ideal
of~$\Omega_{W/V}$. Then~\textup{(1)} and~\textup{(2)} are true
with~$\l_{\Ical}$ replaced by~$\l_{\Fit_{0}(\Omega_{W/V})}$.
\Part{(4)}
Assume that~$V$ and~$W$ are smooth and that~$\f$ is finite,
and let~$R(\f)$ be the ramification divisor of~$\f$.
Then~\textup{(1)} and~\textup{(2)} are true
with~$\l_{\Ical}$ replaced by~$\l_{R(\f)}$.
\end{parts}
\end{proposition}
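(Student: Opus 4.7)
The plan is to reduce (3) and (4) to (1)--(2), then prove (1) directly using the annihilator relation, and finally obtain (2) from (1) by a boundary-height bookkeeping argument.

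For the reductions: since $\Fit_{0}(\Omega_{W/V}) \subset \operatorname{Ann}(\Omega_{W/V}) = \Ical$ as ideal sheaves, we have $\l_{\Fit_{0}(\Omega_{W/V})} \ge \l_{\Ical} + O(M_{K})$, which gives (3). For (4), smoothness of $V$ and $W$ together with finiteness of $\f$ forces $\dim W = \dim V$, and the right-exact conormal sequence exhibits $\Omega_{W/V}$ as the cokernel of a map of locally free sheaves of equal rank; hence $\Fit_{0}(\Omega_{W/V})$ is the invertible ideal sheaf cutting out the ramification divisor $R(\f)$, so (4) follows from (3). For (2): if $\f$ is proper, so is $\f \times \f$, and for $(Q, Q')$ with $\f(Q) = \f(Q')$ one has $\l_{\partial (W \times W)}(Q, Q') \ll \l_{\partial (V \times V)}(\f(Q), \f(Q)) \ll \l_{\partial V}(\f(Q)) + O(M_K)$ by the standard pullback and product decompositions of boundary local heights, so (1) immediately implies (2).

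The main content is thus (1). The key algebraic input, applied to the isomorphism $\Omega_{W/V} \simeq \Ical_{\Delta}/\Ical_{\Delta}^{2}$ (where $\Ical_{\Delta}$ is the ideal sheaf of the diagonal $\Delta \hookrightarrow W \times_{V} W$), is the containment
\[
p_{1}^{*}\Ical \cdot \Ical_{\Delta} \subset \Ical_{\Delta}^{2},
\]
which is merely the definition of annihilator unpacked. Working Zariski-locally with $V = \Spec A$ and $W = \Spec B = \Spec A[b_{1}, \dots, b_{n}]$, the ideal $\Ical_{\Delta} \subset B \otimes_{A} B$ is generated by $\delta_{i} = b_{i} \otimes 1 - 1 \otimes b_{i}$, and for each generator $g$ of $\Ical$ and each index $i$ the containment yields an identity
\[
(g \otimes 1)\delta_{i} = \sum_{j, k} h_{jk, g}^{(i)}\, \delta_{j} \delta_{k}, \qquad h_{jk, g}^{(i)} \in B \otimes_{A} B.
\]
Evaluating at a geometric point $(Q, Q') \in (W \times_{V} W)(\Kbar)$ with $Q \ne Q'$, taking $v$-adic absolute values, maximizing the left-hand side over $i$, and cancelling the nonzero factor $\max_{i}|\delta_{i}(Q, Q')|_{v}$ produces
\[
|g(Q)|_{v} \le c_{v} \cdot \max_{i, j, k} |h_{jk, g}^{(i)}(Q, Q')|_{v} \cdot \max_{j} |\delta_{j}(Q, Q')|_{v},
\]
where $c_{v} = 1$ for non-archimedean $v$ and is bounded by an $M_K$-constant in general. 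Maximizing both sides over generators $g$, taking $-\log$, and using the affine formulas $\l_{\Ical}(Q; v) = -\log \max_{g}|g(Q)|_{v} + O(M_K)$ and $\d_{W}(Q, Q'; v) = -\log \max_{j}|\delta_{j}(Q, Q')|_{v} + O(M_K)$, we arrive at
\[
\d_{W}(Q, Q'; v) \le \l_{\Ical}(Q; v) + O\bigl(\log \max_{i, j, k, g}|h_{jk, g}^{(i)}(Q, Q')|_{v}\bigr) + O(M_K).
\]
Since each $h_{jk, g}^{(i)}$ lifts to a regular function on the affine $W \times W$, the standard comparison between regular functions and boundary local heights absorbs the error into $O\bigl(\l_{\partial (W \times W)}(Q, Q'; v)\bigr)$, and a finite affine cover of $V$ and $W$ patches these local bounds into the global statement (1).

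The main difficulty is bookkeeping: uniform control of the $M_K$-bounded error terms, lifting the relative tensor-product functions $h_{jk,g}^{(i)}$ to regular functions on $W \times W$, and patching across affine charts, all of which is routine given the formalism of Section~\ref{section:htsandarithdistfunc}. The genuine geometric content is concentrated in the annihilator identity $(g \otimes 1)\delta_{i} = \sum h_{jk, g}^{(i)} \delta_{j} \delta_{k}$ together with the simple cancellation trick made possible by the hypothesis $Q \ne Q'$.
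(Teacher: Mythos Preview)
Your proof is correct and takes essentially the same approach as the paper: the key input is the annihilator relation on the conormal sheaf of the diagonal, and the cancellation enabled by $Q\ne Q'$ is identical. The only difference is packaging---the paper states the ideal containment on the absolute product $W\times W$ as $(\pr_1^*\Ical)\Ical_{\Delta(W)}+\Ical_{\Delta(W)}^2\subset(\f\times\f)^*\Ical_{\Delta(V)}+\Ical_{\Delta(W)}^2$ (Lemma~\ref{lem:separation-ideal}) and translates it through the subscheme-height dictionary, then evaluates using $\d_V(\f(Q),\f(Q'))=\infty$, whereas you work on $W\times_V W$ with explicit affine coordinates and do the cancellation by hand, which is equivalent but requires the extra lifting-and-patching bookkeeping you describe.
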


\begin{lemma}
\label{lem:separation-ideal}
Let~$k$ be a field.  Let~$W, V$ be varieties over~$k$ and let
\text{$\f\colon{W}\to{V}$} be a generically \'etale generically finite
morphism.  Let~$\Ical$ be the annihilator ideal sheaf
of~$\Omega_{W/V}$.  Then
\[
(\pr_{1}^{*}\Ical) \Ical_{\D(W)} + \Ical_{\D(W)}^{2} \subset (\f \times \f)^{*}\Ical_{\D(V)} + \Ical_{\D(W)}^{2}
\]
on~$W \times W$.
In particular, since the~$0$-th Fitting ideal sheaf~$ \Fit_{0}(\Omega_{W/V})$ is contained in~$\Ical$, we have
\[
(\pr_{1}^{*}\Fit_{0}(\Omega_{W/V})) \Ical_{\D(W)} + \Ical_{\D(W)}^{2} \subset (\f \times \f)^{*}\Ical_{\D(V)} + \Ical_{\D(W)}^{2}.
\]
Moreover, if~$W,V$ are smooth and~$\f$ is finite, then
\begin{align}
(\pr_{1}^{*}\Ical_{R(\f)}) \Ical_{\D(W)} + \Ical_{\D(W)}^{2} \subset (\f \times \f)^{*}\Ical_{\D(V)} + \Ical_{\D(W)}^{2} \label{separation-ramif}
\end{align}
where~$\Ical_{R(\f)}$ is the ideal sheaf of the ramification divisor~$R(\f)$.
\end{lemma}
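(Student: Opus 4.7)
The plan is to reduce the inclusion to a local algebra statement about K\"ahler differentials, exploiting the standard conormal identification $\Ical_{\D(W)}/\Ical_{\D(W)}^{2} \simeq \Omega_{W/k}$. I would work affine-locally, writing $W = \Spec A$, $V = \Spec B$ with $\f$ corresponding to a ring map $B \to A$. Then $W \times W = \Spec(A \otimes_{k} A)$, the ideal $\Ical_{\D(W)}$ corresponds to $J_{W} := \ker(A \otimes_{k} A \to A)$, and $(\f \times \f)^{*} \Ical_{\D(V)}$ corresponds to the ideal $J$ generated in $A \otimes_{k} A$ by $\{b \otimes 1 - 1 \otimes b : b \in B\}$. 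Similarly $\pr_{1}^{*}\Ical$ corresponds to $I \otimes A$, where $I = \operatorname{Ann}_{A}(\Omega_{A/B})$.

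Next I would pass mod $J_{W}^{2}$ and use the conormal isomorphism $J_{W}/J_{W}^{2} \simeq \Omega_{A/k}$ given by $a \otimes 1 - 1 \otimes a \mapsto da$. Under this identification the image of $J$ in $J_{W}/J_{W}^{2}$ is exactly the image of the canonical map $\f^{*}\Omega_{B/k} \to \Omega_{A/k}$, since $b \otimes 1 - 1 \otimes b$ maps to $db$ (with $b$ viewed in $A$). Consequently, the desired inclusion modulo $J_{W}^{2}$ reduces to showing
\[
I \cdot \Omega_{A/k} \;\subset\; \operatorname{image}\bigl(\f^{*}\Omega_{B/k} \to \Omega_{A/k}\bigr).
\]
Here I also need the easy fact that $(x \otimes 1) \cdot z \equiv (1 \otimes x) \cdot z \pmod{J_{W}^{2}}$ for $x \in A$, $z \in J_{W}$, which matches the action of $\pr_{1}^{*}\Ical$ on $J_{W}/J_{W}^{2}$ with the usual $A$-module action on $\Omega_{A/k}$.

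The core step is then to invoke the fundamental right-exact sequence
\[
\f^{*}\Omega_{B/k} \longrightarrow \Omega_{A/k} \longrightarrow \Omega_{A/B} \longrightarrow 0.
\]
By the definition $I = \operatorname{Ann}_{A}(\Omega_{A/B})$, multiplication by any element of $I$ annihilates the quotient $\Omega_{A/B}$, so $I \cdot \Omega_{A/k}$ lies in the image of $\f^{*}\Omega_{B/k}$, which is exactly what is needed. Translating back to ideals in $A \otimes_{k} A$ yields the stated containment. The remaining two assertions follow immediately: the inclusion $\Fit_{0}(M) \subset \operatorname{Ann}(M)$ for any finitely generated module gives the Fitting-ideal version, and when $W, V$ are smooth of the same dimension with $\f$ finite, $\f^{*}\Omega_{V} \to \Omega_{W}$ is a map of locally free sheaves of equal rank whose determinant ideal cuts out $R(\f)$, giving the identification $\Fit_{0}(\Omega_{W/V}) = \Ical_{R(\f)}$ and hence \eqref{separation-ramif}.

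I do not expect a serious obstacle: the argument is essentially a one-line consequence of the conormal identification together with the meaning of ``annihilator''. The only care required is in the symmetric bookkeeping between the two tensor factors of $A \otimes_{k} A$, and in verifying that the natural projections and the $\f \times \f$ pullback correspond to the ring-theoretic ideals as claimed.
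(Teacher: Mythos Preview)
Your proposal is correct and follows essentially the same approach as the paper: both reduce the inclusion modulo $\Ical_{\D(W)}^{2}$ to the conormal identification $\Ical_{\D(W)}/\Ical_{\D(W)}^{2}\simeq\Omega_{W/k}$, then use the fundamental exact sequence $\f^{*}\Omega_{V/k}\to\Omega_{W/k}\to\Omega_{W/V}\to0$ together with $\Ical\cdot\Omega_{W/V}=0$, and conclude the Fitting and ramification variants from $\Fit_{0}\subset\operatorname{Ann}$ and $\Fit_{0}(\Omega_{W/V})=\Ical_{R(\f)}$ in the smooth finite case. The paper phrases this sheaf-theoretically in a couple of lines, whereas you spell out the affine-local ring-theoretic translation explicitly (including the useful remark that the two $A$-module structures on $J_{W}/J_{W}^{2}$ agree), but the argument is the same.
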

\begin{proof}
Consider the exact sequence
\begin{equation}
  \label{eqn:fstarOmega}
  \f^{*} \Omega_{V/k} \longrightarrow \Omega_{W/k} \longrightarrow \Omega_{W/V} \longrightarrow 0.
\end{equation}
Since the sheaf of differentials is the conormal sheaf of the diagonal, we can rewrite the sequence~\eqref{eqn:fstarOmega} as
\[
(\f \times \f)^{*} \Ical_{\D(V)}/\Ical_{\D(W)} (\f \times
\f)^{*}\Ical_{\D(V)} \longrightarrow \Ical_{\D(W)} / \Ical_{\D(W)}^{2}
\longrightarrow \Omega_{W/V} \longrightarrow 0.
\]
Then using the fact that~$\Ical\Omega_{W/V}=0$, we get
\[
(\pr_{1}^{*}\Ical) \Ical_{\D(W)} + \Ical_{\D(W)}^{2} \subset (\f
\times \f)^{*}\Ical_{\D(V)}\Ocal_{W\times W} + \Ical_{\D(W)}^{2}.
\]
The gives everything except for the last assertion of the lemma, which
follows from the fact that if~$W$ and~$V$ are smooth and~$\f$ is
finite, then~$\Fit_{0}(\Omega_{W/V})=\Ical_{R(\f)}$.
\end{proof}

\begin{remark}
Let~$\Ical$ is the annihilator ideal sheaf of~$\Omega_{W/V}$.
Then if~$W$ and~$V$ are smooth and~$\f$ is finite, we have
\[
\Ical_{R(\f)} = \Fit_{0}(\Omega_{W/V}) \subset \Ical.
\]
Further, if we let~$V\bigl(\Fit_{0}(\Omega_{W/V})\bigr)$ denote the closed subset
defined by the ideal sheaf~$\Fit_{0}(\Omega_{W/V})$, then we also have
\[
V\bigl(\Fit_{0}(\Omega_{W/V})\bigr) = \Support \Omega_{W/V}.
\]
Thus if~$R(\f)$ is reduced, then~$\Ical_{R(\f)}=\Ical$.
\end{remark}

\begin{remark}
When~$W$ and~$V$ are smooth curves, it follows easily from the proof
of Lemma~\ref{lem:separation-ideal} that we have an equality
\[
(\pr_{1}^{*}\Ical_{R(\f)}) \Ical_{\D(W)} + \Ical_{\D(W)}^{2} = (\f
\times \f)^{*}\Ical_{\D(V)} + \Ical_{\D(W)}^{2}.
\]
\end{remark}

\begin{remark}
\label{remark:strictcontainment}  
The containment~\eqref{separation-ramif} in
Lemma~\ref{lem:separation-ideal} may be strict in general in dimension
greater than~$1$.  For example, let
\[
\f \colon \AA^{2} \longrightarrow \AA^{2}, \quad (x,y) \mapsto (x^{2}+xy+y^{2}, xy + 1).
\]
Then the defining function of the ramification divisor is~$2(x^{2}-y^{2})$, and
 \begin{align}
   (\pr_{1}^{*}\Ical_{R(\f)}) & \Ical_{\D(W)} + \Ical_{\D(W)}^{2} \notag\\
   &=  (x^{2}-y^{2})(x-z, y-w) + (x-z, y-w)^{2},  \label{eqn:ideal1}\\
   (\f \times \f)^{*} & \Ical_{\D(V)} + \Ical_{\D(W)}^{2}  \notag\\
   &= \bigl( (x^{2}+xy+y^{2}) - (z^{2}+zw+w^{2}), (xy+1)-(zw+1) \bigr) \notag\\
   &\hspace*{3em} +  (x-z, y-w)^{2}. \label{eqn:ideal2}
\end{align}
It is easy to see that~$xy-zw$ is in the ideal~\eqref{eqn:ideal2},
but that it is not in~\eqref{eqn:ideal1};
cf.\ Example~\ref{example:distrelcounterexample2}.
\end{remark}

\begin{proof}[Proof of Proposition $\ref{prop:separation}$]
The inclusion of ideal sheaves in Lemma~\ref{lem:separation-ideal}
translates into the inequality of local height functions  
\[
\min\{ \l_{\Ical} \circ \pr_{1}+\d_{W}, 2\d_{W}\} \geq \min\{ \d_{V} \circ (\f \times \f), 2\d_{W}\} - \Cl{sepx}\l_{ \partial (W \times W)} + O(M_K).
\]
For~$Q,Q'\in{W}(\Kbar)$ such that~$\f(Q)=\f(Q')$ and~$Q\neq{Q'}$, we have 
\[
\d_{V}((\f \times \f)(Q,Q') ;v) = \d_{V}(q,q) = \infty,
\]
and therefore
\begin{multline*}
\l_{\Ical}(Q;v) + \d_{W}(Q,Q';v) \\
\geq 2\d_{W}(Q,Q';v) -  \Cr{sepx}\l_{ \partial (W \times W)}(Q,Q';v) +O(M_K).
\end{multline*}
This implies
\[
\l_{\Ical}(Q;v) \geq \d_{W}(Q,Q';v) -  \Cr{sepx}\l_{ \partial (W \times W)}(Q,Q';v) +O(M_K),
\]
which proves the first statement.
\par
If we suppose that~$\f$ is proper, then we have
\[
\lambda_{\partial (W\times W)}  \gg \ll \lambda_{\partial (V \times V)} \circ (\f \times \f) + O(M_K),
\]
and we also always have
\[
\lambda_{\partial (V \times V)}  \gg \ll \lambda_{\partial V} \circ \pr_{1} + \lambda_{\partial V} \circ \pr_{2} + O(M_K).
\]
These imply that for all~$Q,Q'\in{W}(\Kbar)$ such that~$\f(Q)=\f(Q')$,
we have
\[
\lambda_{\partial (W\times W)}(Q,Q';v) \gg \ll \lambda_{\partial V}(\f(Q);v) + O(M_K).
\]
Thus we get the second statement
\end{proof}

%%%%%%%%%%%%%%%%%%%%%%%%%%%%%%%%%%%%%%%%%%%%%%%%%%%%%%%%%%%%%%%%%%%%%%
\subsection{The inverse function theorem: Version I} 
%%%%%%%%%%%%%%%%%%%%%%%%%%%%%%%%%%%%%%%%%%%%%%%%%%%%%%%%%%%%%%%%%%%%%%

In this section we prove a version of the inverse function theorem.
The statement and proof are modeled after the statement and proof
in~\cite{silverman:arithdistfunctions}, but corrected by the use of
the alternative ramification local height coming from the corrected
versions of the distribution and separation lemmas proven in
Sections~\ref{section:distributioninequality}
and~\ref{section:separationinequality}.

\begin{theorem}[Inverse function theorem: Version I]
\label{thm:glinvfunc}
Fix/define the following quantities\textup:
\begin{parts}
  \Part{\textbullet}
  $V$ and~$W$ are quasi-projective geometrically integral varieties
  defined over~$K$.
  \Part{\textbullet}
   $\f\colon{W}\to{V}$ is a generically \'etale finite
  flat surjective morphism of degree~$d$ defined over~$K$.
  \Part{\textbullet}
  $\operatorname{Ann}(\Omega_{W/V})$ is the annihilator ideal sheaf of~$\Omega_{W/V}$.
  \Part{\textbullet}
  $A(\f)\subset{W}$ is the closed subscheme defined by~$\operatorname{Ann}(\Omega_{W/V})$.
  \Part{\textbullet}
  $\d_{W}$ and~$\d_{V}$ are arithmetic distance functions on~$W$ and~$V$.
  \Part{\textbullet}
   $\l_{A(\f)}$  is a local height function associated with~$A(\f)$.
  \Part{\textbullet}
  $\lambda_{\partial (W\times V)}$ is a local height boundary function
  for~$W \times V$.
\end{parts}
\begin{parts}
\Part{(a)}
There exist constants~$\Cl{1},\Cl{3} \in \RR_{>0}$ and
$M_{K}$-constants~$\Cl{2},\Cl{4}$ such that the following
holds\textup:
\par
If the triple~$(P,q,v)\in{W}(\Kbar)\times{V}(\Kbar)\times M(\Kbar)$ satisfies
\[
\d_{V}(\f(P),q;v) \geq d \l_{A(\f)}(P;v) + \Cr{1} \lambda_{\partial (W\times V)}(P,q;v) + \Cr{2}(v),
\]
then there exists a point~$Q\in{W}(\Kbar)$ satisfying
\begin{align*}
  \f(Q) = q&\quad\text{and} \\
\d_{W}(P,Q;v) \geq \d_{V}\bigl( \f(P),q;v\bigr) - (d & -1) \l_{A(\f)}(P;v)  \\
 & - \Cr{3} \lambda_{\partial (W\times V)}(P,q;v) - \Cr{4}(v).
\end{align*}
\Part{(b)}
If we take~$\Cr{2}$ to be an appropriate positive real number, instead
of an~$M_{K}$-constant, and if we also assume that~$P\notin{A}(\f)$,
then the point~$Q$ in~\textup{(a)} is unique.
\end{parts}
\end{theorem}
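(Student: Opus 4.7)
The plan is to combine the distribution inequality from Theorem~\ref{thm:distribution-inequality}(1) with the separation inequality from Proposition~\ref{prop:separation}(2) via the standard ``maximal preimage'' strategy. The new ingredient that makes the argument work with the annihilator subscheme $A(\f)$ (rather than the ramification divisor) is that under the quantitative hypothesis the preimage $Q_{0}$ which maximizes $\delta_{W}(P,Q;v)$ is automatically \'etale over $V$, so it contributes to the distribution sum with multiplicity one. This is what reduces the naive bound $\delta_{V} \leq d\alpha_{0} + \cdots$ to the sharper bound $\delta_{V} \leq \alpha_{0} + (d-1)\lambda_{A(\f)}(P) + \cdots$ that is needed.

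Concretely, enumerate $\f^{-1}(q) = \{Q_{0},Q_{1},\dots,Q_{r}\}$, labeled so that $\alpha_{0} := \delta_{W}(P,Q_{0};v)$ is maximal, and write $\alpha_{i} := \delta_{W}(P,Q_{i};v)$ and $e_{i} := e_{\f}(Q_{i})$, with $\sum_{i} e_{i} = d$. Theorem~\ref{thm:distribution-inequality}(1) supplies the master inequality
\[
\delta_{V}(\f(P),q;v) \leq \sum_{i=0}^{r} e_{i}\alpha_{i} + O\bigl(\lambda_{\partial(W\times V)}(P,q;v)\bigr) + O(M_{K}).
\]
Choosing $\Cr{1}$ and $\Cr{2}$ large enough to absorb all implicit constants, the hypothesis forces $\alpha_{0} > \lambda_{A(\f)}(P;v)$ with a quantitative gap: otherwise every $\alpha_{i} \leq \lambda_{A(\f)}(P;v)$, and the master inequality would yield $\delta_{V} \leq d\lambda_{A(\f)}(P;v) + O$, contradicting the hypothesis. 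The local-height triangle inequality $\lambda_{A(\f)}(P;v) \geq \min\{\lambda_{A(\f)}(Q_{0};v),\alpha_{0}\} + O(M_{K})$ then forces $\lambda_{A(\f)}(Q_{0};v) \leq \lambda_{A(\f)}(P;v) + O(M_{K}) < \infty$, so $Q_{0} \notin A(\f) = \Support(\Omega_{W/V})$. Since $\f$ is flat, being unramified at $Q_{0}$ is equivalent to being \'etale there, so $e_{0} = 1$.

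Next apply Proposition~\ref{prop:separation}(2) (valid because $\f$ is finite, hence proper) to each pair $(Q_{0},Q_{i})$ with $i \geq 1$. Combined with the triangle inequality $\delta_{W}(Q_{0},Q_{i};v) \geq \min\{\alpha_{0},\alpha_{i}\} + O(M_{K}) = \alpha_{i} + O(M_{K})$, the bound $\lambda_{A(\f)}(Q_{0};v) \leq \lambda_{A(\f)}(P;v) + O(M_{K})$ from the previous step, and the comparison $\lambda_{\partial V}(q;v) \ll \lambda_{\partial(W\times V)}(P,q;v)$, this gives the uniform bound $\alpha_{i} \leq \lambda_{A(\f)}(P;v) + O(\lambda_{\partial(W\times V)}(P,q;v)) + O(M_{K})$ for every $i \geq 1$. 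Substituting back into the master inequality, using $e_{0} = 1$ and $\sum_{i\geq 1} e_{i} = d-1$, gives
\[
\delta_{V}(\f(P),q;v) \leq \alpha_{0} + (d-1)\lambda_{A(\f)}(P;v) + O\bigl(\lambda_{\partial(W\times V)}(P,q;v)\bigr) + O(M_{K}),
\]
which rearranges to the desired lower bound on $\alpha_{0} = \delta_{W}(P,Q_{0};v)$ and proves part (a) with $Q = Q_{0}$.

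For part (b), when $\Cr{2}$ is taken to be a genuine positive real constant (large enough to dominate the relevant $M_{K}$-constants) and $P \notin A(\f)$, the conclusion of (a) together with the hypothesis shows that $\delta_{W}(P,Q_{0};v)$ strictly exceeds $\lambda_{A(\f)}(P;v)$ by a uniform positive amount, while the separation analysis above shows that every preimage $Q' \neq Q_{0}$ of $q$ satisfies $\delta_{W}(P,Q';v) \leq \lambda_{A(\f)}(P;v) + O(\lambda_{\partial(W\times V)}(P,q;v)) + O(M_{K})$. With $\Cr{3}$ and $\Cr{4}$ matched to these explicit bounds, any $Q$ satisfying the conclusion of (a) must beat the separation bound for the $Q_{i}$, $i \geq 1$, forcing $Q = Q_{0}$. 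The main technical obstacle throughout is the careful bookkeeping of $M_{K}$-constants and boundary terms---choosing $\Cr{1},\dots,\Cr{4}$ so that all implicit constants align through the chain of inequalities, and translating between $\lambda_{\partial(W\times W)}$, $\lambda_{\partial W}$, $\lambda_{\partial V}$, and $\lambda_{\partial(W\times V)}$ using properness of $\f$.
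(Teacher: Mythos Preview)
Your proposal is correct and follows essentially the same approach as the paper's proof: pick the preimage $Q_{0}$ maximizing $\delta_{W}(P,\cdot;v)$, use the distribution inequality together with the hypothesis to force $\delta_{W}(P,Q_{0};v)>\lambda_{A(\f)}(P;v)$ (the paper's Case~1 analysis), deduce via the triangle inequality that $\lambda_{A(\f)}(Q_{0};v)\le\lambda_{A(\f)}(P;v)+O$ and hence $e_{\f}(Q_{0})=1$, then bound all other $\alpha_{i}$ by separation plus triangle inequality and substitute back. The only difference is organizational: you rule out the ``bad'' case $\alpha_{0}\le\lambda_{A(\f)}(P;v)$ at the outset and then invoke the triangle inequality, whereas the paper first records the triangle inequality $\min\{\lambda_{A(\f)}(Q),\delta_{W}(P,Q)\}\le\lambda_{A(\f)}(P)+O$ and then splits into two cases; the logical content and the handling of uniqueness in~(b) are the same.
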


\begin{example}
The point~$Q$ may depend on the absolute value~$v$, as well as on~$P$
and~$q$. We illustrate with an example, which for convenience we write
using affine coordinates. We start with distinct non-archimedean
absolute values~$v$ and~$w$ that do not divide~$2$, and we
choose elements~$\a,\b\in{K}$ satisfying
\[
|\a|_v<1,\quad |\a|_w=1,\quad |\b|_v=1,\quad |\b|_w<1.
\]
We consider the map~$\f(x)=x^2$ and points
\[
P = 1,\quad Q = \frac{\a^n-\b^n}{\a^n+\b^n},\quad q = Q^2.
\]
The ramification divisor of~$\f$ is~$R(\f)=(0)$, so for any absolute value~$u$ we have
\[
\l_{R(\f)}(P;u) = \l_{(0)}(1;u) = -\log|1|_u = 0.
\]
We next determine the distance from~$\f(P)$ to~$q$. Taking~$u$ to
be~$v$ or~$w$, we compute
\begin{align*}
\d_{\PP^1}\bigl(\f(P),q;u\bigr)
= -\log| 1 - Q^2 |_u
&= -\log \left|\frac{4\a^n\b^n}{(\a^n+\b^n)^2} \right|_u \\
&= \begin{cases}
  -n\log|\a|_v &\text{if $u=v$,} \\
  -n\log|\b|_w &\text{if $u=w$.} \\
\end{cases}
\end{align*}
Hence if~$n$ is sufficiently large, then we have satisfied the
assumptions of the inverse function theorem for both~$v$ and~$w$, so
in each case there is a point in~$\f^{-1}(q)=\{\pm{Q}\}$ that is
appropriately close to~$P$. But for~$u\in\{v,w\}$, we find that
\begin{align*}
\d_{\PP^1}\bigl(P,\pm Q;u\bigr)
= -\log| -1 \pm Q |_u
& =-\log \left| -1 \pm \frac{\a^n-\b^n}{\a^n+\b^n} \right|_u \\
& = \begin{cases}
  -\log \left| \dfrac{2\a^n}{\a^n+\b^n} \right|_u &\text{if sign is $-$,} \\
  -\log \left| \dfrac{2\b^n}{\a^n+\b^n} \right|_u &\text{if sign is $+$,} \\
\end{cases} \\
& = \begin{cases}
  -n\log|\a|_v &\text{if $u=v$, sign is $-$,} \\
  -n\log|\b|_w &\text{if $u=w$, sign is $+$,} \\
  0 &\text{otherwise.} \\
\end{cases}
\end{align*}
Hence for the~$v$-adic absolute value, the inverse function theorem
requires us to take~$-Q$, while for the~$w$-adic absolute value we must
take~$Q$.
\end{example}

\begin{remark}
The morphism~$\f$ is \'etale outside~$A(\f)$.
In particular,~$e_{\f}(Q) = 1$ for all points~$Q \in (W \setminus A(\f))(\Kbar)$.
\end{remark}

\begin{remark}\label{rmk:annvsram}
Suppose that~$W$ and~$V$ are smooth.  Then in general, we
have~$A(\f)\subset{R}(\f)$ as closed subschemes.
Indeed, 
\[
\f^{*}\Omega_{V} \longrightarrow \Omega_{W} \longrightarrow \Omega_{W/V} \longrightarrow 0
\]
is a locally free resolution of $\Omega_{W/V}$ and the ideal of $R(\f)$ is locally generated by the determinant of the first map.
Hence
\[
\l_{A(\f)} \leq \l_{R(\f)} + O( \lambda_{\partial W}) + O(M_K),
\]
so if~$W$ and~$V$ are smooth, then Theorem~\ref{thm:glinvfunc} is true
with~$\l_{R(\f)}$ in place of~$\l_{A(\f)}$.
\end{remark}

\begin{proof}[Proof of Theorem~$\ref{thm:glinvfunc}$]
We fix boundary functions~$ \lambda_{\partial W}$ and~$\lambda_{\partial V}$.
Since
\[
 \lambda_{\partial (W\times V)} \gg \ll \lambda_{\partial W} + \lambda_{\partial V}  + O(M_K),
 \]
it is enough to show the statement for~$ \lambda_{\partial (W\times V)} = \lambda_{\partial W}+ \lambda_{\partial V}$.
Note that we have 
\[
\lambda_{\partial W} \gg \ll \lambda_{\partial V} \circ \f + O(M_K),
\]
since~$\f$ is a proper morphism.

In the following, we write~$C_{i}$ for positive real constants and
$C_{i}(v)$ for~$M_{K}$-constants. These constants are allowed to
depend on the varieties~$W$ and~$V$, on the map~$\f$, and on our
choice of local height and distance functions
$\d_{W},\d_{V},\l_{A(\f)},\lambda_{\partial W},\lambda_{\partial V}$.

By Theorem~\ref{thm:distribution-inequality} and Proposition
\ref{prop:separation}, we have:
\par\noindent\textbullet\enspace\textbf{Distribution inequality}:
\begin{align*}
  \d_{V}(\f(P),q;v) \leq \smash[b]{ \sum_{Q \in W(\Kbar), \f(Q)=q}}    e_{\f}(Q)\d_{W}&(P,Q;v) \\
  & + \Cl{di1} \lambda_{\partial (W\times V)}(P,q;v) +\Cl{di2}(v)
\end{align*}
for all~$(P,q,v)\in{W}(\Kbar)\times{V}(\Kbar)\times M(\Kbar)$.

\par\noindent\textbullet\enspace\textbf{Separation inequality}:
%%\item Separation inequality:
\begin{align*}
\d_{W}(Q,Q';v) \leq \l_{A(\f)}(Q;v) + \Cl{si1} \lambda_{\partial V}(\f(Q);v) + \Cl{si2}(v)
\end{align*}
for all~$(Q,Q',v)\in{W}(\Kbar)\times{W}(\Kbar)\times M(\Kbar)$ such
that~$\f(Q)=\f(Q')$ and~$Q\neq{Q'}$.

Let us fix
arbitrary~$(P,q,v)\in{W}(\Kbar)\times{V}(\Kbar)\times M(\Kbar)$.
For~$Q,Q'\in{W}(\Kbar)$ such that~$\f(Q)=\f(Q')=q$ and~$Q\neq{Q'}$, by
the triangle inequality and the separation inequality, we have
\begin{align*}
\min&\{ \d_{W}(P,Q';v), \d_{W}(P, Q;v)\}\\
&\leq \d_{W}(Q,Q';v) + \Cl{tr} \lambda_{\partial (W^3)}(P,Q,Q';v) + \C(v)\\
&\leq \l_{A(\f)}(Q;v) + \Cr{si1} \lambda_{\partial V}(\f(Q);v) +  \Cr{tr} \lambda_{\partial (W^3)}(P,Q,Q';v) + \C(v)\\
& \leq  \l_{A(\f)}(Q;v) + \Cl{ts1} \lambda_{\partial V}(q;v) + \Cr{tr} \lambda_{\partial W}(P;v) + \Cl{ts2}(v),
\end{align*}
where
$\lambda_{\partial(W^3)}=\lambda_{\partial{W}}+\lambda_{\partial{W}}+\lambda_{\partial{W}}$.

Let~$Q\in{W}(\Kbar)$ be a point with~$\f(Q)=q$ such that
\[
\d_{W}(P,Q;v) = \max\bigl\{ \d_{W}(P,Q';v) : Q'\in W(\Kbar),\, \f(Q')=q\bigr\}.
\]
Then for every~$Q'\in{W}(\Kbar)$ such that~$\f(Q')=q$ and~$Q'\neq{Q}$,
we have
\begin{align}
\label{ineq:Q'}
\d_{W}(P,Q';v)
& = \min\bigl\{ \d_{W}(P,Q';v), \d_{W}(P, Q;v)\bigr\}  \notag\\
& \leq  \l_{A(\f)}(Q;v) + \Cr{ts1} \lambda_{\partial V}(q;v) + \Cr{tr} \lambda_{\partial W}(P;v) + \Cr{ts2}(v).
\end{align}

Bounding the right-hand side of the distribution inequality by~\eqref{ineq:Q'}, we get  
\begin{align*}
\d&_{V} (\f(P),q;v) \\*
 & \leq  e_{\f}(Q)\d_{W}(P,Q;v) \\*
&\hspace{.2em}{} + \bigl(d-e_{\f}(Q)\bigr)\left(\l_{A(\f)}(Q;v) + \Cr{ts1} \lambda_{\partial V}(q;v) + \Cr{tr} \lambda_{\partial W}(P;v) + \Cr{ts2}(v) \right)\\*
&\hspace{.2em}{} + \Cr{di1} \lambda_{\partial (W\times V)}(P,q;v) +\Cr{di2}(v)\\*
& \leq  e_{\f}(Q)\d_{W}(P,Q;v) +(d-e_{\f}(Q))\l_{A(\f)}(Q;v) \\*
&\hspace{.2em}{}+  \Cl{tsd1} \lambda_{\partial (W\times V)}(P,q;v) + \Cl{tsd2}(v).
\end{align*}
\par
In summary, we have proved that for all
\[
(P,q,v)\in{W}(\Kbar)\times{V}(\Kbar)\times M(\Kbar)
\]
there is a point a point~$Q \in W(\Kbar)$ satisfying~$\f(Q)=q$ such
that
\begin{align}\label{ineq:key}
\d_{V}  (\f(P),q;v) 
&\leq   e_{\f}(Q)\d_{W}(P,Q;v) +(d-e_{\f}(Q))\l_{A(\f)}(Q;v) \notag \\*
&\hspace{4em}{} +  \Cr{tsd1} \lambda_{\partial (W\times V)}(P,q;v) + \Cr{tsd2}(v)
\end{align}
and
\begin{align*}
\d_{W}(P,Q;v) = \max\bigl\{ \d_{W}(P,Q';v) : Q'\in W(\Kbar),\; \f(Q')=q\bigr\}.
\end{align*}

Next, we compare~$\l_{A(\f)}(Q;v)$ and~$\l_{A(\f)}(P;v)$.
By the triangle inequality, we have
\begin{multline*}
\min\{ \l_{A(\f)}(Q;v), \d_{W}(P,Q;v)\} \\
\leq \l_{A(\f)}(P;v) + \Cl{2tr1} \lambda_{\partial (W\times W)}(P,Q;v) + \Cl{2tr2}(v),
\end{multline*}
where
$\lambda_{\partial(W\times{W})}=\lambda_{\partial{W}}+\lambda_{\partial{W}}$.

\par\noindent\textbf{Case 1}.\enspace
Suppose  that
\begin{align*}
\d_{W}(P,Q;v) \leq  \l_{A(\f)}(P;v) + \Cr{2tr1} \lambda_{\partial (W\times W)}(P,Q;v) + \Cr{2tr2}(v).
\end{align*}
By the distribution inequality and the choice of~$Q$, we have
\begin{align*}
 d\d_{W} & (P,Q;v) +  \Cr{di1} \lambda_{\partial (W\times V)}(P,q;v) +\Cr{di2}(v) \\
& \geq  \sum_{Q' \in W(\Kbar), \f(Q')=q}e_{\f}(Q')\d_{W}(P,Q';v) +  \Cr{di1} \lambda_{\partial (W\times V)}(P,q;v) +\Cr{di2}(v) \\
& \geq \d_{V}(\f(P),q;v).
\end{align*}
Thus we get  
\begin{align*}
\d_{V}(\f(P),q;v) 
&\leq d \left(\l_{A(\f)}(P;v) + \Cr{2tr1} \lambda_{\partial (W\times W)}(P,Q;v) + \Cr{2tr2}(v) \right) \\
& \omit\hfill${}+  \Cr{di1} \lambda_{\partial (W\times V)}(P,q;v) +\Cr{di2}(v)$\\
&\leq  d \l_{A(\f)}(P;v) + \Cl{dr1} \lambda_{\partial (W\times V)}(P,q;v) + \Cl{dr2}(v).
\end{align*}
Hence if we assume that
\begin{equation}
  \label{eqn:dVfPqvgeqdl}
  \d_{V}(\f(P),q;v) \geq d \l_{A(\f)}(P;v) + \Cr{1} \lambda_{\partial (W\times V)}(P,q;v) + \Cr{2}(v),
\end{equation}
then we get
\begin{multline*}
d \l_{A(\f)}(P;v) + \Cr{1} \lambda_{\partial (W\times V)}(P,q;v) + \Cr{2}(v)\\
 \leq  d \l_{A(\f)}(P;v) + \Cr{dr1} \lambda_{\partial (W\times V)}(P,q;v) + \Cr{dr2}(v).
\end{multline*}
Now if~$P\in{A}(\f)$, or equivalently if~$\l_{A(\f)}(P;v)=\infty$,
then~\eqref{eqn:dVfPqvgeqdl} implies that~$q=\f(P)$, so we can simply
take $Q=P$.  Thus we may assume that~$P\notin{A}(\f)$, in which case we
get
\begin{align}\label{ineq:assvs}
\Cr{1} \lambda_{\partial (W\times V)}(P,q;v) + \Cr{2}(v) \leq 
 \Cr{dr1} \lambda_{\partial (W\times V)}(P,q;v) + \Cr{dr2}(v).
\end{align}
Thus if we take~$\Cr{1} > \Cr{dr1}$ and take~$\Cr{2}$ large enough as
an~$M_{K}$-constant, then for
any~$(P,q,v)\in{W}(\Kbar)\times{V}(\Kbar)\times M(\Kbar)$, either
the inequality~\eqref{ineq:assvs} does not hold, or else
\[
\lambda_{\partial (W\times V)}(P,q;v) = \Cr{2}(v) = \Cr{dr2}(v)=0. 
\]
In the latter case, we have
\begin{align*}
\d_{V}(\f(P),q;v) = d \l_{A(\f)}(P;v) 
\end{align*}
and
\begin{align*}
&\d_{W}(P,Q;v) \\
&=  \l_{A(\f)}(P;v) + \Cr{2tr1} \lambda_{\partial (W\times W)}(P,Q;v) + \Cr{2tr2}(v)\\
& = \d_{V}(\f(P),q;v) - (d-1)\l_{A(\f)}(P;v) + \Cr{2tr1} \lambda_{\partial (W\times W)}(P,Q;v) + \Cr{2tr2}(v)\\
& \geq \d_{V}(\f(P),q;v) - (d-1)\l_{A(\f)}(P;v) - \C  \lambda_{\partial (W\times V)}(P,q;v) - \C(v).
\end{align*}
This proves the first statement for Case~1.

We also note that that if we take~$\Cr{2}$ to be a large positive
constant, rather than an~$M_{K}$-constant, and if we also assume that
$P\notin{A}(\f)$, then Case~1 does not happen.

\par\noindent\textbf{Case 2}.\enspace
We are reduced to the case that for appropriate choices
for the constants~$\Cr{1}$ and~$\Cr{2}$, we may assume that
\begin{align}\label{ineq:disram}
 \l_{A(\f)}(Q;v) & \leq \l_{A(\f)}(P;v) + \Cr{2tr1} \lambda_{\partial (W\times W)}(P,Q;v) + \Cr{2tr2}(v)  \notag\\
 &\leq \l_{A(\f)}(P;v) + \Cl{cr1} \lambda_{\partial (W\times V)}(P,q;v) + \Cl{cr2}(v).
\end{align}
By the same reasoning as earlier, we may assume that~$P\notin{A}(\f)$,
or equivalently, that~$\l_{A(\f)}(P;v)<\infty$.
Then~\eqref{ineq:disram} tells us that~$Q\notin{A}(\f)$
and~$e_{\f}(Q)=1$.

Applying~\eqref{ineq:key} and~\eqref{ineq:disram}, we get
\begin{align*}
&\d_{V}  (\f(P),q;v) \\*
&\leq   \d_{W}(P,Q;v) +(d-1)\left(   \l_{A(\f)}(P;v) + \Cr{cr1} \lambda_{\partial (W\times V)}(P,q;v) + \Cr{cr2}(v) \right) \\*
&\qquad{} +  \Cr{tsd1} \lambda_{\partial (W\times V)}(P,q;v) + \Cr{tsd2}(v)\\
&\leq  \d_{W}(P,Q;v) +(d-1)\l_{A(\f)}(P;v) +\Cl{fl1} \lambda_{\partial (W\times V)}(P,q;v) +\Cl{fl2}(v),
\end{align*}
which is what we want.  This completes the proof of~(a), i.e., the
existence of a point~$Q$ having the specified properties.
\par\noindent(b)\enspace It remains to show that if we take~$\Cr{2}$
to be a sufficiently large absolute constant, rather than
an~$M_K$-constant, then~$Q$ is unique.  Assume that~$P\notin{A}(\f)$.
We first choose~$\Cr{1},\dots,\Cr{4}$ so that~(a) holds, but we then
replace~$\Cr{2}$ with a large positive real number. As noted earlier,
this means that Case~1 in the proof of~(a) does not occur.

Suppose that there is a point~$Q'\in{W}(\Kbar)$ such that~$\f(Q')=q$
and~$Q'\neq{Q}$  and~$Q'$ satisfies
\begin{multline*}
\d_{V}(\f(P),q;v) \\
\leq \d_{W}(P,Q';v) +(d-1)\l_{A(\f)}(P;v) +\Cr{3} \lambda_{\partial (W\times V)}(P,q;v) +\Cr{4}(v).
\end{multline*}
This estimate and~\eqref{ineq:Q'} and~\eqref{ineq:disram} yield
\begin{align*}
d &\l_{A(\f)}(P;v) + \Cr{1} \lambda_{\partial (W\times V)}(P,q;v) + \Cr{2}\\
&\leq \d_{V}(\f(P),q;v)\\
& \leq \d_{W}(P,Q';v) +(d-1)\l_{A(\f)}(P;v) +\Cr{3} \lambda_{\partial (W\times V)}(P,q;v) +\Cr{4}(v)\\
& \leq d\l_{A(\f)}(P;v) + \Cl{contr1} \lambda_{\partial (W\times V)}(P,q;v) +\Cl{contr2}(v).
\end{align*}
If we replace~$\Cr{1}$ and~$\Cr{2}$ with larger constants that depend
on~$\Cr{contr1}$,~$\Cr{contr2}$, and $\lambda_{\partial(W\times{V})}$,
we obtain a contradiction.
\end{proof}

\begin{remark}
The proof of Theorem~\ref{thm:glinvfunc} shows that there is an
$M_{K}$-constant~$\g$ such that if
$\Cr{2}\colon M(\Kbar)\longrightarrow\RR_{\geq 0}$ satisfies the
strict inequality~$\Cr{2}(v)>\g(v)$ for all~$v\in M(\Kbar)$, and if
we assume that~$P\notin{A}(\f)$, then we still get the uniqueness
of~$Q$.
\end{remark}

%%%%%%%%%%%%%%%%%%%%%%%%%%%%%%%%%%%%%%%%%%%%%%%%%%%%%%%%%%%%%%%%%%%%%%
\section{A version of continuity of roots}
\label{section:controots}
%%%%%%%%%%%%%%%%%%%%%%%%%%%%%%%%%%%%%%%%%%%%%%%%%%%%%%%%%%%%%%%%%%%%%%
In Sections~\ref{section:controots}--\ref{section:newtonarch}, we
shift our focus to a field~$K$ that is complete with respect to a
fixed absolute value.  We thus let $\bigl(K,|\,\cdot\,|\bigr)$ be a
complete field, and since there is only one absolute value, we
drop~$v$ from the our notation for local heights and arithmetic
distance functions.

A theorem such as Theorem~\ref{thm:glinvfunc} may be viewed as a
quantitative higher dimensional variant of the classical theorem that
the roots of a univariate polynomial vary continuously with its
coefficients.  In our next result, we apply
Theorem~\ref{thm:glinvfunc} to a certain morphism to prove such a
result.  We use this later to prove a stronger inverse function
theorem; see
Theorem~\ref{thm:strong-inverse-function-all-absolute-values}.

\begin{proposition}
\label{prop:controots}
Let~$\bigl(K,|\,\cdot\,|\bigr)$ be a complete field with a non-trivial
absolute value \text{$|\,\cdot\,|$}.  Let~$D\in\RR_{>0}$ and
$n\in\ZZ_{>0}$.  Then there are constants
$\Cl{Kr1},\Cl{Kr2}\in\RR_{>0}$ such that the following holds.  Suppose
that\textup:
\begin{parts}
\Part{\textbullet} $f, g\in K[t]$ are monic polynomials of degree~$n$;
\Part{\textbullet} $|f| \leq D$ and~$|g| \leq D$, where~$|f|$ and~$|g|$ are the
  Gauss norms;\footnote{The Gauss norm of a polynomial is the maximum
    of the absolute values of its coefficients.}
  \Part{\textbullet} There is an ~$\a\in K$
  such that
\begin{equation}
\label{eqn:fa0fgckr1}
  f(\a)=0\quad \text{and}\quad |f-g| \leq e^{-\Cr{Kr1}} |f'(\a)|^{n}. 
\end{equation}
\end{parts}
Then there is~$\b \in K$ such that
\[
g(\b)=0 \quad\text{and}\quad |\a - \b||f'(\a)|^{n-1} \leq e^{\Cr{Kr2}} |f-g|.
\]
\end{proposition}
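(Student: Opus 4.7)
The plan is to derive the statement from Theorem \ref{thm:glinvfunc} applied to the ``evaluation'' morphism
\[
\varphi \colon W = \AA^{n+1} \longrightarrow V = \AA^{n+1}, \qquad \varphi(c_{0},\dots,c_{n-1},t) = \bigl(c_{0},\dots,c_{n-1},\, f_{c}(t)\bigr),
\]
where $f_{c}(t) = t^{n} + c_{n-1}t^{n-1} + \dots + c_{0}$. One checks routinely that $\varphi$ is a finite flat surjective morphism of degree $n$ between smooth geometrically integral varieties (with $\varphi_{*}\mathcal{O}_{W}$ free over $\mathcal{O}_{V}$ on the basis $1,t,\dots,t^{n-1}$), that $\varphi$ is generically \'etale, and that the presentation $\varphi^{*}\Omega_{V}\to\Omega_{W}$ sends $ds\mapsto f_{c}'(t)\,dt$ modulo the $dc_{i}$'s, so that
\[
\Omega_{W/V} \simeq \mathcal{O}_{W}\big/\bigl(f_{c}'(t)\bigr).
\]
Consequently the annihilator subscheme $A(\varphi)$ is cut out by $f_{c}'(t)$, and the associated local height satisfies $\lambda_{A(\varphi)}(c,t)=-\log|f_{c}'(t)|+O(1)$ on any fixed bounded affine region of $W$.

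I would then apply Theorem \ref{thm:glinvfunc} at
\[
P=(\mathrm{coeff}(f),\alpha)\in W(K), \qquad q=(\mathrm{coeff}(g),0)\in V(K).
\]
Since $f(\alpha)=0$, we have $\varphi(P)=(\mathrm{coeff}(f),0)$, which agrees with $q$ in the last coordinate. Hence in the standard projective model $\AA^{n+1}\subset\PP^{n+1}$,
\[
\delta_{V}(\varphi(P),q) = -\log|f-g| + O(1), \qquad \lambda_{A(\varphi)}(P) = -\log|f'(\alpha)| + O(1),
\]
where the $O(1)$ depends only on an upper bound for the affine coordinates of $P$ and $q$. The hypotheses $|f|,|g|\le D$ bound the coefficient coordinates, and a standard polynomial root bound (e.g.\ $|\alpha|\le\max(1,nD)$ in the archimedean case, $|\alpha|\le\max(1,D)$ in the non-archimedean case) bounds $\alpha$, so $(P,q)$ lies in a fixed bounded affine region of $W\times V$, forcing $\lambda_{\partial(W\times V)}(P,q)=O(1)$ with a constant depending only on $n$ and $D$.

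With $d=n$, the hypothesis of Theorem \ref{thm:glinvfunc} now reads
\[
-\log|f-g| \ge -n\log|f'(\alpha)| + O(1),
\]
which is exactly what $|f-g|\le e^{-C_{Kr1}}|f'(\alpha)|^{n}$ supplies for $C_{Kr1}$ chosen large enough to absorb the implicit constant. The theorem then produces $Q=(c',\beta)\in W(\Kbar)$ with $\varphi(Q)=q$, i.e.\ $c'=\mathrm{coeff}(g)$ and $g(\beta)=0$, and
\[
\delta_{W}(P,Q) \ge \delta_{V}(\varphi(P),q) - (n-1)\lambda_{A(\varphi)}(P) + O(1).
\]
Combining with $\delta_{W}(P,Q)\le -\log|\alpha-\beta|+O(1)$ rearranges directly to the desired estimate $|\alpha-\beta|\,|f'(\alpha)|^{n-1}\le e^{C_{Kr2}}|f-g|$. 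To obtain $\beta\in K$ rather than merely $\Kbar$, I would take $C_{Kr1}$ large enough to invoke the uniqueness assertion in Theorem \ref{thm:glinvfunc}(b); the condition $P\notin A(\varphi)$ is automatic (if $f'(\alpha)=0$ the hypothesis forces $f=g$ and one takes $\beta=\alpha$), and Galois-equivariance of the unique extension of $|\,\cdot\,|$ to $\Kbar$ shows $\sigma Q$ satisfies the same estimates as $Q$ for every $\sigma\in\Gal(\Kbar/K)$, so uniqueness gives $\sigma Q=Q$ and $\beta\in K$. The main obstacle, and the only real content beyond Theorem \ref{thm:glinvfunc}, is the bookkeeping showing that all of the $O(\lambda_{\partial(W\times V)})$ and $O(M_{K})$ errors collapse to explicit constants depending only on $n$ and $D$ on the bounded region carved out by $|f|,|g|\le D$ together with the root bound on $\alpha$.
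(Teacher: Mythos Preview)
Your proposal is correct and follows essentially the same route as the paper: apply Theorem~\ref{thm:glinvfunc} to the evaluation morphism $\varphi\colon\AA^{n+1}\to\AA^{n+1}$, $(c_0,\dots,c_{n-1},t)\mapsto(c_0,\dots,c_{n-1},f_c(t))$, at the points $P=(\mathrm{coeff}(f),\alpha)$ and $q=(\mathrm{coeff}(g),0)$, after noting that $A(\varphi)=R(\varphi)=\{f_c'(t)=0\}$ and that the boundedness hypotheses $|f|,|g|\le D$ confine everything to a fixed bounded affine region where the boundary term is $O(1)$.

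The one genuine difference is in the descent step $\beta\in K$. The paper argues directly: it estimates $|g'(\beta)|\ge C\,|f'(\alpha)|$, observes that any Galois conjugate $\beta'\ne\beta$ satisfies $|\beta'-\beta|\gg|g'(\beta)|$, compares with $|\alpha-\beta|\le e^{C_{Kr2}-C_{Kr1}}|f'(\alpha)|$, and then invokes Krasner's Lemma. Your argument instead uses the uniqueness clause Theorem~\ref{thm:glinvfunc}(b) together with the Galois-invariance of the (unique) extension of $|\,\cdot\,|$ to $\Kbar$: since $P,q$ are $K$-rational and $|\sigma(x)|=|x|$ for $\sigma\in\Gal(\Kbar/K)$, the point $\sigma Q$ satisfies the same conclusions as $Q$, forcing $\sigma Q=Q$. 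This is a valid and arguably cleaner alternative, since it recycles machinery already proved rather than appealing to Krasner; the paper's route has the minor advantage of being self-contained at the level of polynomial estimates and of making the quantitative dependence on $C_{Kr1}$ explicit.
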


\begin{remark}
See Corollary~\ref{cor:stcontrootsnonarch}.  for a stronger version
Proposition~\ref{prop:controots}.  But when
\text{$\bigl(K,|\,\cdot\,|\bigr)$} is non-archimedean, we need
Proposition~\ref{prop:controots} to prove
Proposition~\ref{prop:newton-nonarhi}, to which we ultimately reduce
Corollary~\ref{cor:stcontrootsnonarch}.
\end{remark}

\begin{proof}[Proof of Proposition $\ref{prop:controots}$]
We consider the following morphism:
\begin{align*}
\f \colon \AA^{n+1}_{K} &\longrightarrow \AA^{n+1}_{K}\\
 (x_{0},\dots, x_{n-1}, t) &\longmapsto (x_{0},\dots, x_{n-1}, t^{n}+x_{n-1}t^{n-1}+\cdots + x_{1}t+x_{0}).
\end{align*}
The map~$\f$ is a generically \'etale, finite, surjective morphism of degree~$n$. Its ramification divisor is
\begin{align*}
R(\f) = \bigl( J:= nt^{n-1}+(n-1)x_{n-1}t^{n-2}+\cdots +x_{1}=0\bigr) \subset \AA^{n+1}_{K}.
\end{align*}

We fix an algebraic closure~$\Kbar$ of~$K$ and an extension of
\text{$|\,\cdot\,|$} to~$\Kbar$, which we also denote by
\text{$|\,\cdot\,|$}.  We consider the affine bounded subset
\begin{align*}
B := \{ \xi \in \AA^{n+1}(\Kbar) \mid  \| \xi \| \leq D \},
\end{align*}
where \text{$\|\,\cdot\,\|$} denotes the sup norm on the coordinates.
Since~$\f$ is finite,~$\f^{-1}(B) \subset \AA^{n+1}(\Kbar)$ is also an
affine bounded subset.  We apply Theorem~\ref{thm:glinvfunc} to
$\f_{\Kbar}$ on various bounded subsets, where we note that since we
are over a local field, the set~$M_K$ consists of a single absolute
value.  We may take
\begin{align*}
\d_{\AA^{n+1}}(\xi, \eta) &= \log \frac{1}{\|\xi -\eta \| }  && \text{on~$\bigl(\f^{-1}(B)\cup B\bigr)\times \bigl(\f^{-1}(B)\cup B\bigr)$,}\\[1\jot]
\l_{R(\f)}(\xi) &= \log \frac{1}{\bigl|J(\xi)\bigr|}  &&\text{on~$\f^{-1}(B)$,}\\[1\jot]
 \lambda_{\partial (\AA^{n+1} \times \AA^{n+1})}(\xi,\eta) &= 0  &&\text{on~$\f^{-1}(B) \times B$}.
\end{align*}
Theorem~\ref{thm:glinvfunc} and Remark~\ref{rmk:annvsram} tell us
that there are positive constants~$\Cr{Kr1},\Cr{Kr2}$ such that for
$\xi\in\f^{-1}(B)$ and~$\eta\in{B}$, if
\begin{align*}
\log \frac{1}{\| \f(\xi) - \eta \|} \geq n \log \frac{1}{|J(\xi)|} + \Cr{Kr1},
\end{align*}
then there is a~$\zeta \in \f^{-1}(B)$ satisfying
$\f(\zeta)=\eta$ and
\[
 \log \frac{1}{\| \xi - \zeta \|} + (n-1)\log \frac{1}{|J(\xi)|} + \Cr{Kr2} \geq  \log \frac{1}{\| \f(\xi) - \eta \|}.
\]

Rewriting this, we find that if~$\xi\in\f^{-1}(B)$ and~$\eta\in{B}$
satisfy
\begin{align*}
\| \f(\xi) - \eta\| \leq e^{-\Cr{Kr1}}|J(\xi)|^{n},
\end{align*}
then there is~$\zeta \in \f^{-1}(B)$ satisfying
\[
\f(\zeta)=\eta\quad\text{and}\quad
\|\xi - \zeta \| |J(\xi)|^{n-1} \leq e^{\Cr{Kr2}} \|\f(\xi) - \eta \|.
\]
We apply this to the points
\[
\xi = (a_{0},\dots, a_{n-1},\a)
\quad\text{and}\quad
\eta = (b_{0},\dots ,b_{n-1},0)
\]
associated to the polynomials
\[
f = t^{n}+a_{n-1}t^{n-1} + \cdots + a_{0}\\
\quad\text{and}\quad
g = t^{n}+b_{n-1}t^{n-1} + \cdots + b_{0}.
\]
It follows that if
\begin{align*}
|f-g| = \|\f(\xi) - \eta\| \leq e^{-\Cr{Kr1}} |J(\xi)|^{n} = e^{-\Cr{Kr1}} |f'(\a)|^{n},
\end{align*}
then there is a~$\b \in \Kbar$ such that~$g(\b)=0$ and
\[
 |\a -\b||f'(\a)|^{n-1} \leq \| \xi - (b_{0},\dots ,b_{n-1},\b) \| |f'(\a)|^{n-1} \leq e^{\Cr{Kr2}} |f-g|.
\]

It remains to show that~$\b \in K$, where we may need to increase the
value of~$\Cr{Kr1}$.  To this end, we may assume that~$f'(\a) \neq 0$,
since otherwise~\eqref{eqn:fa0fgckr1} tells us that~$f=g$, so we may
take~$\b=\a$.

Using the assumption that~$f'(\a)\ne0$, we can estimate
\begin{align}
  \label{eqn:gpbetagetfpalpha}
  \bigl|g'(\b)\bigr|
  &\ge \bigl|f'(\a)\bigr| - \bigl|f'(\a)-g'(\b)\bigr| \notag\\
  &\ge \bigl|f'(\a)\bigr| - \Cl{Kr3}|\a-\b|
  \quad\begin{tabular}[t]{@{}r@{}}for some $\Cr{Kr3}>0$ depending\\ on $n, D$, and $|\,\cdot\,|$.\\ \end{tabular} \notag\\[-3\jot]
  &\ge \bigl|f'(\a)\bigr| - \Cr{Kr3} e^{\Cr{Kr2}} \frac{|f-g|}{\bigl|f'(\a)\bigr|^{n-1}} \notag\\
  &\ge \bigl|f'(\a)\bigr| -   \Cr{Kr3}e^{\Cr{Kr2}}e^{-\Cr{Kr1}}\bigl|f'(\a)\bigr| \notag\\
  &= \Cl{Kr6}\bigl|f'(\a)\bigr|
  \quad\begin{tabular}[t]{@{}l@{}}
  where $\Cr{Kr6}>$ if we take an\\ appropriately large value for $\Cr{Kr1}$.\\ \end{tabular}
\end{align}
In particular, we note that~$g'(\b)\ne0$.  

Suppose~$\b \notin K$.  Let~$\b'\in\Kbar$ be
a~$\Gal(\Kbar/K)$-conjugate of~$\b$ with~$\b'\ne\b$.  Then
\begin{align*}
|\b'-\b| \geq \Cl{Kr4} |g'(\b)|  \geq \Cl{Kr5} |f'(\a)|
\end{align*}
where the first inequality is elementary and the second is~\eqref{eqn:gpbetagetfpalpha}.
Note that we also have 
\[
|\a - \b| \leq e^{\Cr{Kr2}-\Cr{Kr1}}|f'(\a)|,
\]
so if~$\Cr{Kr1}$ is large enough, then we get
\[
|\b'-\b| > |\a - \b|.
\]
It follows from Krasner's Lemma~\cite[Ch.~II, Sec.~2,
  Prop.~3]{lang:numbertheory} that~$\b\in{K}(\a)=K$.
\end{proof}

%%%%%%%%%%%%%%%%%%%%%%%%%%%%%%%%%%%%%%%%%%%%%%%%%%%%%%%%%%%%%%%%%%%%%%
\section{The inverse function theorem: Version II}
\label{section:invfunccompletefields}
%%%%%%%%%%%%%%%%%%%%%%%%%%%%%%%%%%%%%%%%%%%%%%%%%%%%%%%%%%%%%%%%%%%%%%

In Theorem~\ref{thm:glinvfunc} we proved an inverse function theorem
that is uniform over a possibly infinite collection of absolute
values.  In this section we work over a single complete field and use
higher dimensional version of Newton's iterative method to prove the
following stronger statement.

\begin{theorem}[Inverse function theorem II]
\label{thm:strong-inverse-function-all-absolute-values}
Let~$\bigl(K,|\,\cdot\,|\bigr)$ be a complete field.  Let~$W, V$ be
smooth quasi-projective varieties over~$K$, and let
$\f\colon{W}\longrightarrow{V}$ be a generically finite generically
\'etale morphism.  Let~$E \subset W$ be the closed subscheme defined
by the~$0$-th fitting ideal sheaf of~$\Omega_{W/V}$.
Fix arithmetic distance functions~$\d_{W}, \d_{V}$, a local height
function~$\l_{E}$, and a boundary function~$\l_{ \partial V}$.
Let~$B \subset W(K)$ be a bounded
subset.

Then there are constants
$\Cl{IFT1a},\Cl{IFT2a},\Cl{IFT3a},\Cl{IFT4a}>0$ and a bounded
subset~$\widetilde{B}\subset{W}(K)$ containing~$B$ such that for
all~$P\in{B}$ and~$q\in{V}(K)$ satisfying
\[
 P \notin E \quad\text{and}\quad
 \d_{V}(\f(P),q) \geq 2 \l_{E}(P) + \Cr{IFT1a} \l_{ \partial V}(q) + \Cr{IFT2a},
\]
there is a unique~$Q\in\widetilde{B}$ satisfying
\[
 \f(Q) = q \quad\text{and}\quad
 \d_{W}(P, Q) \geq \d_{V}(\f(P), q) - \l_{E}(P) - \Cr{IFT3a} \l_{ \partial V}(q) - \Cr{IFT4a}.
 \]
\end{theorem}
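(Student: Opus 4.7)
The plan is to reduce the statement to a local Newton iteration in affine coordinates and then carry out the Newton analysis separately in the archimedean and non-archimedean cases (this being the content of Sections \ref{section:newtonnonarch} and \ref{section:newtonarch}). First, I would cover $B$ by finitely many standard bounded subsets contained in affine charts of $W$, which is possible because $B$ is bounded and $W$ is quasi-projective. Since $\f$ is finite and preimages of bounded sets under finite morphisms are bounded, I can likewise arrange that $\f(B)$ lies in an affine chart of $V$. Replacing $W, V$ by these charts, I may then assume $W \subset \AA^N$ and $V \subset \AA^M$ are affine with $\f$ polynomial. Generic \'etaleness and generic finiteness force $\dim W = \dim V = N$, and locally at any $P \notin E$ the Jacobian of $\f$ has non-vanishing $N \times N$ minor with respect to some $N$ out of the $M$ target coordinates. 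The crucial numerical input is that $\l_E(P)$ is comparable to $-\log |\det J(P)|$ on each chart, since $E$ is defined by the $0$-th Fitting ideal of $\Omega_{W/V}$, which is locally generated by the maximal Jacobian minors.

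The core of the argument is classical Newton iteration. Setting $P_0 = P$ and $P_{n+1} = P_n - D\f(P_n)^{-1}(\f(P_n) - q)$ (using the chosen $N$ target coordinates), the hypothesis
\[
\d_V(\f(P),q) \geq 2\l_E(P) + \Cr{IFT1a}\l_{\partial V}(q) + \Cr{IFT2a}
\]
reads, after exponentiation, as a Kantorovich-type inequality $\|\f(P)-q\| \leq c \|D\f(P)^{-1}\|^{-2}$ guaranteeing convergence of the iterates. The limit $Q$ then satisfies $\f(Q)=q$ and
\[
\|P - Q\| \leq c' \|\f(P)-q\| \cdot \|D\f(P)^{-1}\|,
\]
which translates into the desired distance estimate with the sharp exponents $(n,m) = (2,1)$. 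The non-archimedean case is cleaner, with convergence following immediately from the ultrametric quadratic doubling, but Proposition \ref{prop:controots} is still needed to ensure that the Newton limit lives in $K$ rather than merely in $\Kbar$; the archimedean case requires the classical majorizing-series estimates.

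Uniqueness of $Q$ in a bounded enlargement $\widetilde{B} \supset B$ will follow from the separation inequality (Proposition \ref{prop:separation}): any other $Q' \in \f^{-1}(q) \cap \widetilde{B}$ would satisfy $\d_W(Q,Q') \leq \l_E(Q) + O(\l_{\partial V}(q)) + O(1)$, and combining this with the Newton estimate for $\d_W(P,Q)$ together with $\l_E(Q) = \l_E(P) + O(\cdots)$ (valid because $Q$ is close to $P$) contradicts the largeness hypothesis on $\d_V(\f(P),q)$. The set $\widetilde{B}$ is constructed as the preimage under $\f$ of a bounded neighborhood of $\f(B)$, which is bounded by finiteness of $\f$. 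The main obstacle will be the archimedean Newton analysis: extracting the exponents $(2,1)$ with constants uniform over $B$ requires a careful Kantorovich-style estimate of successive increments, and the bookkeeping needed to show that $\l_E(P)$ (rather than some larger invariant) controls the inverse-Jacobian norm uniformly across each chart of $B$ relies on the Fitting-ideal interpretation of $E$ together with a standard sup-norm estimate for matrix inversion.
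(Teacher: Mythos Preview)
Your overall architecture---reduce to affine charts, run Newton iteration to get existence with exponents $(2,1)$, and invoke separation for uniqueness---matches the paper's. But there is a genuine gap in the reduction step, and two related errors.

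First, the sentence ``I may then assume $W\subset\AA^N$ and $V\subset\AA^M$ with $\dim W=\dim V=N$'' does not make sense as written: a smooth affine $N$-fold need not sit as an open in $\AA^N$, and if you embed it as a closed subvariety of some larger $\AA^{N'}$ then your Newton step $P_{n+1}=P_n-D\f(P_n)^{-1}(\f(P_n)-q)$ produces a point of $\AA^{N'}$ that has no reason to lie on $W$. The paper handles this by choosing \'etale charts $\pi\colon U\to\AA^N$ and $\nu\colon V'\to\AA^N$ on affine opens of $W$ and $V$, performing the Newton step on $\AA^N$ to obtain $\eta=\pi(Q_i)-(D\f)(Q_i)^{-1}\bigl(\nu(\f(Q_i))-\nu(q)\bigr)$, and then \emph{lifting} $\eta$ back to a point $Q_{i+1}\in W(K)$ with $\pi(Q_{i+1})=\eta$. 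This lift is the content of Proposition~\ref{prop:invet} (the inverse function theorem for \'etale morphisms), and it is non-trivial in the non-archimedean case precisely because there is no local compactness to invoke.

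Second, this is exactly where Proposition~\ref{prop:controots} enters, not where you place it. The Newton iterates live in $K^N$ throughout (completeness of $K$ keeps the limit in $K$ automatically); continuity of roots is used instead inside the proof of Proposition~\ref{prop:invet} to invert the \'etale chart $\pi$ over $K$, via Chevalley's local structure theorem writing an \'etale map as a standard \'etale map $\Spec(R[t]/(f))_g\to\Spec R$.

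Third, $\f$ is only assumed generically finite, so your construction of $\widetilde{B}$ as $\f^{-1}$ of a bounded set fails. In the paper $\widetilde{B}$ arises directly from the Newton construction: the iterates are shown to remain in the original standard bounded set $B$ (non-archimedean) or a fixed enlargement $B_2\supset B_1$ (archimedean), so one may take $\widetilde{B}$ to be this set.
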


\begin{remark}
If~$W$ is projective, then we may take~$B=\widetilde{B}={W}(K)$, since
projective varieties are covered by finitely many affine bounded
subsets.
\end{remark}

\begin{remark}
If~$\f$ is a finite morphism, then~$E$ is equal to the ramification
divisor of~$\f$.
\end{remark}

\begin{corollary}[A variant of continuity of roots]\label{cor:stcontrootsnonarch}
Let~$\bigl(K,|\,\cdot\,|\bigr)$ be a complete field.
Let~$D\in\RR_{>0}$ and~$n\in\ZZ_{>0}$.
Then there are positive constants~$\Cl{IFTx1}, \Cl{IFTx2}>0$ such that the following holds.
Suppose that\textup:
\begin{parts}
  \Part{\textbullet}
  $f, g\in K[t]$ are monic polynomials of degree~$n$;
  \Part{\textbullet}
  $|f| \leq D$ and~$|g| \leq D$;
  \Part{\textbullet}
  There is an ~$\a\in K$  such that
  \[
  f(\a)=0\quad \text{and}\quad |f-g| \leq \Cr{IFTx1} |f'(\a)|^{2}.
  \]
\end{parts}
Then there is~$\b \in K$ such that
\[
g(\b)=0 \quad\text{and}\quad |\a - \b||f'(\a)| \leq \Cr{IFTx2} |f-g|.
\]
\end{corollary}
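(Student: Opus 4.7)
The plan is to apply Theorem~\ref{thm:strong-inverse-function-all-absolute-values} to exactly the same morphism
\[
\f \colon \AA^{n+1}_{K} \longrightarrow \AA^{n+1}_{K}, \quad (x_{0},\dots, x_{n-1}, t) \longmapsto (x_{0},\dots, x_{n-1}, t^{n}+x_{n-1}t^{n-1}+\cdots+x_{0})
\]
that was used in the proof of Proposition~\ref{prop:controots}. The key improvement is that the new inverse function theorem has optimal exponents $(2,1)$, so we expect the exponent of $|f'(\a)|$ on the right-hand side to drop from $n-1$ to $1$, matching the statement of the corollary. Moreover, and this is crucial, Theorem~\ref{thm:strong-inverse-function-all-absolute-values} produces a point $Q \in \widetilde{B} \subset W(K)$, so we will automatically have $\b \in K$ without any appeal to Krasner's lemma.

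First I would dispose of the degenerate case $f'(\a) = 0$, in which the hypothesis forces $f = g$ and we take $\b = \a$. Otherwise, observe that for monic polynomials of degree $n$ with Gauss norm bounded by $D$, the absolute values of their roots are bounded by a constant $D'$ depending only on $n$, $D$, and $|\,\cdot\,|$ (this is standard: in the non-archimedean case $|\a| \leq \max(1,D)$, and in the archimedean case $|\a|^n \leq n D \max(1,|\a|^{n-1})$). I would then fix the bounded subset
\[
B = \bigl\{\, \xi \in \AA^{n+1}(K) : \|\xi\| \leq \max(D, D') \,\bigr\},
\]
which contains both the point $\xi = (a_{0},\dots,a_{n-1},\a)$ associated with $f$ and the point $\eta = (b_{0},\dots,b_{n-1},0)$ associated with $g$.

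Next I would apply Theorem~\ref{thm:strong-inverse-function-all-absolute-values} to $\f$ with this choice of $B$, using the standard distance and local height functions
\[
\d_{\AA^{n+1}}(\xi,\eta) = \log\frac{1}{\|\xi-\eta\|}, \qquad \l_{E}(\xi) = \log\frac{1}{|J(\xi)|},
\]
where $J = nt^{n-1} + (n-1)x_{n-1}t^{n-2} + \cdots + x_{1}$ cuts out the ramification divisor. Since $\eta$ lies in the bounded set $B$, the boundary term $\l_{\partial V}(\eta)$ is bounded by a constant that can be absorbed. Computing $J(\xi) = f'(\a)$ and $\|\f(\xi) - \eta\| = |f-g|$ (using $f(\a)=0$), the hypothesis of the theorem translates precisely to $|f-g| \leq \Cr{IFTx1} |f'(\a)|^{2}$ for a suitable $\Cr{IFTx1}$, and the conclusion produces a unique $\zeta = (b_{0},\dots,b_{n-1},\b) \in \widetilde{B} \subset W(K)$ satisfying $\f(\zeta) = \eta$, hence $g(\b) = 0$ with $\b \in K$, and
\[
\|\xi - \zeta\| \cdot |f'(\a)| \leq \Cr{IFTx2}\, \|\f(\xi) - \eta\|,
\]
which gives $|\a-\b|\cdot|f'(\a)| \leq \Cr{IFTx2} |f-g|$ as desired.

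I expect the main (but essentially routine) obstacle to be bookkeeping: verifying that the bounded set $B$ can be chosen uniformly in $f$ and $g$ subject only to their Gauss norms being bounded by $D$, and checking that the boundary and $M_K$-constants in Theorem~\ref{thm:strong-inverse-function-all-absolute-values} can be absorbed into the constants $\Cr{IFTx1}, \Cr{IFTx2}$ of the corollary. Conceptually this is a near-direct translation of the improved inverse function theorem into the language of perturbations of polynomial roots, exactly as in Proposition~\ref{prop:controots} but with the better exponent and with the rationality of $\b$ now built into the theorem rather than recovered afterwards.
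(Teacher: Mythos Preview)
Your proposal is correct and follows essentially the same approach as the paper: apply Theorem~\ref{thm:strong-inverse-function-all-absolute-values} to the same morphism $\f\colon\AA^{n+1}_K\to\AA^{n+1}_K$ used in Proposition~\ref{prop:controots}, so that the improved exponents $(2,1)$ replace $(n,n-1)$. Your added remark that $\b\in K$ is automatic because Theorem~\ref{thm:strong-inverse-function-all-absolute-values} produces $Q\in\widetilde{B}\subset W(K)$, obviating the Krasner step, is a nice clarification that the paper leaves implicit.
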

\begin{proof}
The proof is the same as the proof of Proposition
\ref{prop:controots}, except that we use Theorem
\ref{thm:strong-inverse-function-all-absolute-values}, instead of
Theorem~\ref{thm:glinvfunc}.  Note that if
$\f\colon{W}\longrightarrow{V}$ is a finite surjective morphism of
smooth varieties, then the closed subscheme defined by the~$0$-th
Fitting ideal of~$\Omega_{W/V}$ is equal to the ramification divisor
of~$\f$.
\end{proof}

We first prove the uniqueness of the~$Q$ in
Theorem~\ref{thm:strong-inverse-function-all-absolute-values}.  Then,
in order to prove existence, we consider the archimedean and
non-archimedean cases separately.  
We start with a lemma that says
if~$x$ is in a bounded subset, then all~$y$ that are sufficiently
close to~$x$ also lie in a bounded subset.

\begin{lemma}
\label{lem:closetobdd}
Let~$\bigl(K,|\,\cdot\,|\bigr)$ be a complete field.  Let~$X$ be a
quasi-projective variety over~$K$.  Fix an arithmetic distance
function~$\d_{X}$ and a boundary function~$\l_{\partial(X\times{X})}$.
Let~$B\subset{X}(K)$ be a bounded subset.
\begin{parts}
\Part{(a)}
There are constants~$\Cl{IFT1},\Cl{IFT2}>0$ so that
\[
\bigcup_{x\in B} \bigl\{ y\in X(K) : 
\d_{X}(x,y) \geq \Cr{IFT1}\l_{ \partial(X \times X)}(x,y) + \Cr{IFT2} \bigr\}
\]
is confined in a bounded subset~$B'\subset{X}(K)$.
\Part{(b)}
If~$B$ is an affine bounded subset of some open affine subset
$U\subset{X}$, then~$B'$ may be chosen to be an affine bounded subset
of the same open set~$U$.
\Part{(c)}
If further the absolute value is non-archimedean and~$B$ is a standard
bounded subset of~$U$, then we may take~$B'=B$.
\end{parts}
\end{lemma}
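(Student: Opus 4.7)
The plan is to prove (b) first, derive (a) from it by a covering argument, and obtain (c) as a refinement of (b) in the non-archimedean setting.

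For (b), write $U = \Spec A$ with $A = K[f_1, \ldots, f_r]$, so that affine boundedness of $B$ gives $\sup_{x \in B} |f_i(x)| \leq b_i$ for each $i$. The key geometric input is that on $U \times U \subset X \times X$, the diagonal $\D(X) \cap (U \times U) = \D(U)$ is cut out by the ideal $(f_i \otimes 1 - 1 \otimes f_i : 1 \leq i \leq r)$. Using the intersection property of local heights recalled in Section~\ref{section:htsandarithdistfunc}, on any affine bounded subset of $U \times U$ one has
\[
\d_X(x, y) = -\log \max_i |f_i(x) - f_i(y)| + O(1),
\]
with an $O(\l_{\partial(X \times X)}(x, y))$ correction in general. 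Provided $y$ does lie in $U(K)$, choosing $\Cr{IFT1}, \Cr{IFT2}$ large enough turns the hypothesis into $|f_i(x) - f_i(y)| \leq 1$ for all $i$; hence $|f_i(y)| \leq b_i + 1$, and $y$ lies in the affine bounded set $B' := \{y \in U(K) : |f_i(y)| \leq b_i + 1 \text{ for all } i\}$.

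The main obstacle is verifying that $y$ actually lies in $U(K)$ in the first place rather than escaping to another chart. The tool is the triangle inequality for local heights, applied to the boundary local height $\l_{\partial U}$ (where $\partial U = \overline{U} \setminus U$ for a chosen projective closure $\overline{U}$), giving
\[
\l_{\partial U}(x) \geq \min\bigl\{\l_{\partial U}(y),\, \d_X(x, y)\bigr\} + O(1).
\]
Since $B$ is affine bounded in $U$, the function $\l_{\partial U}$ is bounded on $B$ by some constant $M$ (being comparable, up to $O(1)$, to an expression in the generators of $A$). If $\d_X(x, y)$ exceeds $M + O(1)$, the displayed inequality forces $\l_{\partial U}(y) \leq \l_{\partial U}(x) + O(1) < \infty$, which means precisely $y \in U(K)$. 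Absorbing $M$ and the implicit constants into $\Cr{IFT1}, \Cr{IFT2}$ completes the argument for (b).

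Part (a) follows by writing $B = \bigcup_{j=1}^{N} B_j$ as a finite union of affine bounded subsets $B_j \subset U_j(K)$, applying (b) to each pair $(B_j, U_j)$ to produce affine bounded $B'_j \subset U_j(K)$, and taking $B' = \bigcup_j B'_j$. For (c), with $|\,\cdot\,|$ non-archimedean and $B = \{x : |f_i(x)| \leq b_i \text{ for all } i\}$ a standard bounded subset of $U$, the ultrametric inequality gives $|f_i(y)| \leq \max\bigl(|f_i(x)|, |f_i(x) - f_i(y)|\bigr)$; choosing $\Cr{IFT2}$ large enough that $|f_i(x) - f_i(y)| \leq b_i$ for every $i$ places $y$ in $B$ itself, so we may take $B' = B$.
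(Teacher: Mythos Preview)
Your proof is correct and follows essentially the same approach as the paper: reduce to the affine bounded case, use the triangle inequality for the local height of the complement of $U$ to force $y \in U$, then bound the coordinate generators $f_i(y)$ via the explicit description of $\d_X$ on $U \times U$. One small point: the paper works with $Z = X \setminus U$ as a closed subscheme of $X$ rather than your $\partial U = \overline{U} \setminus U$ in a projective closure of $U$; this is cleaner because $\l_Z$ is then a local height on $X$ itself (where $y$ a priori lives), so that $\l_Z(y) < \infty$ directly characterizes $y \in U$, whereas your $\l_{\partial U}(y)$ is not obviously defined before you know $y \in \overline{U}$.
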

\begin{proof}
Without loss of generality, we may assume that there is an open affine
subset~$U \subset X$ such that $B \subset U(K)$ and that~$B$ is affine
bounded in~$U$.  Let~$x \in B$ and~$y \in X(K)$ be two points that
satisfy
\begin{align*}
\d_{X}(x,y) \geq \Cr{IFT1}\l_{ \partial(X \times X)}(x,y) + \Cr{IFT2}.
\end{align*}

Let~$Z = X \setminus U$. We equip~$Z$ with its reduced structure and
fix a local height~$\l_{Z}$.  The triangle
inequality~\cite[Prop.~3.1(c)]{silverman:arithdistfunctions} gives
\begin{align*}
\min\{ \l_{Z}(y), \d_{X}(x,y) \} \leq \l_{Z}(x) + \Cl{IFT3}\l_{ \partial (X \times X)}(x,y) + \Cl{IFT4},
\end{align*}
where~$\Cr{IFT3}$ and~$\Cr{IFT4}$ are independent of~$x$ and~$y$.

Suppose that
\begin{equation}
  \label{eqn:dXxylelZxIFT3}
  \d_{X}(x,y) \leq \l_{Z}(x) + \Cr{IFT3}\l_{ \partial (X \times X)}(x,y) + \Cr{IFT4}.
\end{equation}
Then we have
\begin{align*}
\Cr{IFT1}\l_{ \partial(X \times X)}(x,y) + \Cr{IFT2} \leq \l_{Z}(x) + \Cr{IFT3}\l_{ \partial (X \times X)}(x,y) + \Cr{IFT4}.
\end{align*}
Since~$x \in B$, we know that
\[
\l_{Z}(x) \leq \sup_{\xi \in B} \l_{Z}(\xi) =: M < \infty,
\]
and hence
\begin{equation}
  \label{eqn:DIFT1CIFT3lXX}
(\Cr{IFT1}-\Cr{IFT3})\l_{ \partial(X \times X)}(x,y) + \Cr{IFT2} - \Cr{IFT4} \leq M.
\end{equation}
Since~$\l_{ \partial(X \times X)}$ is bounded below, we see
that~\eqref{eqn:DIFT1CIFT3lXX} is false for sufficiently
large~$\Cr{IFT1}$ and~$\Cr{IFT2}$ and hence~\eqref{eqn:dXxylelZxIFT3}
is also false.
 
We may thus assume that
\begin{align*}
\l_{Z}(y) \leq \l_{Z}(x) + \Cr{IFT3}\l_{ \partial (X \times X)}(x,y) + \Cr{IFT4}.
\end{align*}
In particular,~$\l_{Z}(y) < \infty$ and~$y \in U$.

Let~$f_{1},\dots,f_{r}$ be~$K$-algebra generators for the ring~$\Ocal(U)$.
Then there are constants~$\Cl{IFT5},\Cl{IFT6}$ such that
\begin{align*}
\d_{X}(\xi, \eta) \leq & \log  \frac{1}{\max_{1 \leq i \leq r}\{ |f_{i}(\xi) - f_{i}(\eta)| \}} \\[2mm]
&+ \Cr{IFT5} (\l_{Z}(\xi) + \l_{Z}(\eta) + \l_{ \partial(X \times X)}(\xi, \eta)) + \Cr{IFT6}
\end{align*}
for all~$\xi,\eta\in{U}(K)$.
Plugging in~$\xi=x$ and~$\eta=y$, we get
\begin{align*}
\Cr{IFT1}  \l_{ \partial(X \times X)} & (x,y) + \Cr{IFT2}  
\leq  \d_{X}(x,y) \\
& \leq \log  \min_{1 \leq i \leq r} \bigl|f_{i}(x) - f_{i}(y)\bigr|^{-1}\\
&\quad{} + \Cr{IFT5} \bigl(\l_{Z}(x) + \l_{Z}(y) + \l_{ \partial(X \times X)}(x, y)\bigr) + \Cr{IFT6}\\
& \leq \log  \min_{1 \leq i \leq r} \bigl|f_{i}(x) - f_{i}(y)\bigr|^{-1}\\
&\quad{} + \Cr{IFT5} \bigl(2\l_{Z}(x)  + (\Cr{IFT3} + 1)\l_{ \partial(X \times X)}(x, y) + \Cr{IFT4}\bigr) + \Cr{IFT6}\\
&\leq  \log  \min_{1 \leq i \leq r} \bigl|f_{i}(x) - f_{i}(y)\bigr|^{-1}\\
&\quad{} + \Cl{IFT7}\l_{Z}(x)  + \Cl{IFT8} \l_{ \partial(X \times X)}(x, y)  + \Cl{IFT9}.
\end{align*}
Thus 
\begin{multline*}
(\Cr{IFT1}- \Cr{IFT8})\l_{ \partial(X \times X)}(x, y)   + \Cr{IFT2} - \Cr{IFT9}- \Cr{IFT7}M \\
 \leq   \log  \min_{1 \leq i \leq r} \bigl|f_{i}(x) - f_{i}(y)\bigr|^{-1}.
\end{multline*}
The boundary function~$\l_{ \partial(X \times X)}$ is bounded below, and the values
$\bigl|f_{1}(x)\bigr|,\ldots,\bigl|f_{r}(x)\bigr|$ are bounded
for~$x\in{B}$, so we see that the values
$\bigl|f_{1}(y)\bigr|,\ldots,\bigl|f_{r}(y)\bigr|$ are bounded.
Hence~$y$ is contained in an affine bounded subset of~$U(K)$.

Finally let~$\bigl(K,|\,\cdot\,|\bigr)$ be non-archimedean, and
let~$B$ be a standard bounded subset of~$U$.  We choose~$K$-algebra
generators~$f_{1},\dots,f_{r}$ and positive constants~$b_1,\ldots,b_r$
so that
\[
B =\bigl \{ x \in U(K) : \bigl|f_{1}(x)\bigr| \leq b_{1}, \dots , \bigl|f_{r}(x)\bigr| \leq b_{r}\bigr\}.
\]
Choosing~$\Cr{IFT1}$ and~$\Cr{IFT2}$ sufficiently large
ensures that
\text{$\bigl|f_{i}(x)-f_{i}(y)\bigr|\leq{b}_{i}$} for all~$i$.
This implies~$\bigl|f_{i}(y)\bigr|\leq{b}_{i}$,
since~$\bigl(K,|\,\cdot\,|\bigr)$ is non-archimedean, and thus we see
that~$y\in{B}$.
\end{proof}

\begin{proof}[Proof of uniqueness in Theorem $\ref{thm:strong-inverse-function-all-absolute-values}$]
As usual, we may assume that $\l_{ \partial{V}}\geq0$. Suppose now
that both of the points~$Q,Q'\in\widetilde{ B}$ satisfy the conclusion
of Theorem~\ref{thm:strong-inverse-function-all-absolute-values}. Our
goal is to show that~$Q=Q'$.

Since~$\f(P)$ varies over the bounded subset~$\f(B)$ and~$\l_{E}$ is
bounded below, Lemma~\ref{lem:closetobdd} tells us that there is a
bounded subset~$B'\subset{V}(K)$ such that $q\in{B'}$ if we
take~$\Cr{IFT1a},\ldots,\Cr{IFT4a}$ to be sufficiently large.
Set
\[
M = \sup_{\eta \in B'} \l_{ \partial V}(\eta) < \infty.
\]
Note that if we increase the values of~$\Cr{IFT1a},\ldots,\Cr{IFT4a}$,
we can use the same   bounded set~$B'$ and the same value for~$M$.
We estimate
\begin{align*}
\min\{ \d_{W}(P, Q), \d_{W}(P, Q') \} &\geq \d_{V}(\f(P), q)-\l_{E}(P) - \Cr{IFT3a} \l_{ \partial V}(q) - \Cr{IFT4a}\\
& \geq \l_{E}(P) + \Cr{IFT2a} - \Cr{IFT4a} - \Cr{IFT3a} M.
\end{align*}
On the other hand, by the triangle
inequality~\cite[Prop.~3.1(c)]{silverman:arithdistfunctions} and
separation (Proposition~\ref{prop:separation}), we have
\begin{align*}
\min\{ \d_{W}(P, Q), \d_{W}(P, Q') \} \leq \d_{W}(Q, Q') +  \Cl{IFT5a}
 \leq \l_{E}(Q) + \Cl{IFT6a}
\end{align*}
for constants~$\Cr{IFT5a},\Cr{IFT6a}$ independent of~$P,q,Q,Q'$.

By the triangle inequality again, we have
\begin{align*}
\min\{ \l_{E}(Q), \d_{W}(P,Q)  \} \leq \l_{E}(P) + \Cl{IFT7a}
\end{align*}
for some constant~$\Cr{IFT7a}$. 
If~$\d_{W}(P,Q) \leq \l_{E}(P) + \Cr{IFT7a}$, then
\[
\l_{E}(P) + \Cr{IFT2a} - \Cr{IFT4a} - \Cr{IFT3a} M \leq \l_{E}(P) + \Cr{IFT7a},
\]
which gives a contradiction if~$\Cr{IFT2a}$ is sufficiently large.

On the other hand, if~$\l_{E}(Q)\leq\l_{E}(P)+\Cr{IFT7a}$, then
\begin{align*}
\l_{E}(P) + \Cr{IFT2a} - \Cr{IFT4a} - \Cr{IFT3a} M \leq \l_{E}(Q) + \Cr{IFT6a} \leq  \l_{E}(P) + \Cr{IFT6a} + \Cr{IFT7a},
\end{align*}
which again  gives a contradiction if~$\Cr{IFT2a}$ is sufficiently large.
\end{proof}

\begin{proof}[Proof of existence of Theorem $\ref{thm:strong-inverse-function-all-absolute-values}$]
The bulk of the proof of the existence of~$Q$ is based on local
calculations that are somewhat different in the archimedean and
non-archimedean cases.  So in the remainder of this section we give
the part of the proof that is common to both cases, and refer the
reader to Sections~\ref{section:newtonnonarch}
and~\ref{section:newtonarch} for the remainder of the proof.

As usual, we may assume that take~$\l_{\partial{V}}\geq0$.  We start
with an open affine cover~$\{ V_{i}\}_{i=1}^{r}$ of~$V$ such that
each~$V_{i}$ admits an \'etale morphism to an affine
space~$\AA^{N}_{K}$.  For each~$i$ we fix an open affine
cover~$\{U_{ij}\}_{j=1}^{s_{i}}$ of~$\f^{-1}(V_{i})$ such that
each~$U_{ij}$ admits an \'etale morphism to~$\AA^{N}$.

We choose generators for the rings~$\Ocal_V(U_{ij})$
so that the associated standard bounded subsets~$B_{ij}\subset{U}_{ij}(K)$
satisfy~$B\subset\bigcup_{i,j}B_{ij}$.
By Lemma~\ref{lem:closetobdd}, there are constants~$\Cl{IFT1b}, \Cl{IFT2b}$, and standard bounded subsets 
$B_{i}' \subset V_{i}(K)$ such that:
\begin{parts}
\Part{\textbullet}
  $\f(B_{ij}) \subset B_{i}'$
\Part{\textbullet}
  if~$q \in V(K)$ satisfies
\[
\d_{V}(\f(P), q) \geq 2 \l_{E}(P) + \Cr{IFT1b} \l_{ \partial V} (q) + \Cr{IFT2b}
\]
for some~$P \in B_{ij}$, then~$q \in B_{i}'$.
\end{parts}

Thus we can reduce the existence of~$Q$ to the situation summarized in
Figure~\ref{figure:PtoBprime}, which essentially reduces the problem
to affine space.

\begin{figure}[ht]
 \[
\xymatrix{
  P \in B_{ij}  \ar@{}[r]|{\subset} \ar@/^{25pt}/[rrr]
  & U_{ij}(K) \ar[r]^{\f} \ar[d]
  & V_{i}(K) \ar[d] 	& B'_{i} \ni q \ar@{}[l]|{\supset}\\
 & \AA^{N}_{K}(K) 	& \AA^{N}_{K}(K). 		& 
}
\]
\caption{Maps from affine bounded sets to affine space}
\label{figure:PtoBprime}
\end{figure}
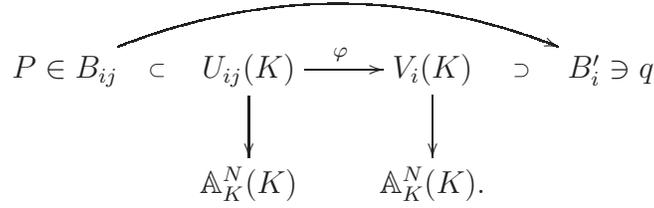

\paragraph{\textbf{Case 1: $\boldsymbol{\bigl(K,|\,\cdot\,|\bigr)}$ is non-archimedean}}
In this case, the proof of existence will be done in
Section~\ref{section:newtonnonarch}; see
Proposition~\ref{prop:newton-nonarhi}.

\paragraph{\textbf{Case 2: $\boldsymbol{\bigl(K,|\,\cdot\,|\bigr)}$ is archimedean}}
We may take~$K=\RR$ or~$\CC$ with the usual absolute value.
Then for any~$x\in{B}_{ij}$, there are compact neighborhoods
\[
x \in T_{1} \subset T_{2} \subset U_{ij}(K)
\quad\text{and}\quad
\f(x) \in T' \subset U_{i}(K)
\]
such that: 
\begin{parts}
\Part{\textbullet}
  $T_{1} \subset T_{2}^{\circ}$, where~$T_{2}^{\circ}$ denotes the interior of~$T_{2}$;
\Part{\textbullet}
  $\f(T_{1}) \subset (T')^{\circ}$;
\Part{\textbullet}
  a small open neighborhood of~$T_{2}$ is homeomorphic to its image by~$U_{ij} \longrightarrow \AA^{N}_{K}$;
\Part{\textbullet}
  a small open neighborhood of~$T'$ is homeomorphic to its image by~$V_{i} \longrightarrow \AA^{N}_{K}$.
\end{parts}
Since~$B_{ij}$ is compact, we can cover~$B_{ij}$ by finitely many sets that look like~$T_{1}$.
Further, if~$\Cr{IFT1b}, \Cr{IFT2b}$ are sufficiently large, 
then for all~$q\in{B}_{ij}$ such that  there exists  some~$P\in{T}_{1}$  with
\begin{align*}
\d_{V}(\f(P), q) \geq 2 \l_{E}(P) + \Cr{IFT1b} \l_{ \partial V}(q) + \Cr{IFT2b},
\end{align*}
we necessarily have~$q\in{T'}$.

In this case, Proposition~\ref{prop:newtonmethod-arch}, which we prove in
Section~\ref{section:newtonarch}, completes the proof of existence,
once we observe that
(1)~the function~$e^{-\d_{W}}$ is  comparable to the usual metric on~$T_{2}$;
(2)~the function~$e^{-\d_{V}}$ is  comparable to the usual metric  on~$T'$;
(3)~if~$J$ is a local equation for~$E$, then~$e^{-\l_{E}}$ is comparable to~$|J|$;
(4)~the function~$\l_{\partial{V}}$ is bounded on~$T'$.
\end{proof}

%%%%%%%%%%%%%%%%%%%%%%%%%%%%%%%%%%%%%%%%%%%%%%%%%%%%%%%%%%%%%%%%%%%%%%
\section{Newton's method (Non-archimedean case)}
\label{section:newtonnonarch}
%%%%%%%%%%%%%%%%%%%%%%%%%%%%%%%%%%%%%%%%%%%%%%%%%%%%%%%%%%%%%%%%%%%%%%

When we work over a non-archimedean complete field such as~$\CC_p$, we
lose the local compactness, and thus we cannot localize problems as is
done over the complex numbers.  Nevertheless, similar arguments work
if we use bounded subsets instead of small neighborhoods.  The goal of
this section is to prove the following proposition.

\begin{proposition}\label{prop:newton-nonarhi}
Let~$\bigl(K,|\,\cdot\,|\bigr)$ be a non-archimedean complete field.
Let~$V/K$ and~$W/K$ be  smooth affine varieties, 
let~$\f \colon W \longrightarrow V$ be a generically finite generically \'etale morphism defined over~$K$
and suppose that there are \'etale morphisms~$\pi$ and~$\nu$ to affine space
as in the following diagram\textup:
 \[
\xymatrix{
W \ar[r]^{\f} \ar[d]_{\pi} & V \ar[d]^{\nu}\\
\AA^{N}_{K} & \AA^{N}_{K}.
}
\] 
Let~$E \subset W$ be the closed subscheme defined by the~$0$th Fitting
ideal sheaf~$\Fit_{0}(\Omega_{W/V})$ of the relative sheaf of
differentials~$\Omega_{W/V}$.  Fix arithmetic distance
functions~$\d_{W}$ and~$\d_{V}$ and a local height function~$\l_{E}$.

Let~$B\subset W(K)$ and~$B' \subset V(K)$ be standard bounded subsets such that~$\f(B)\subset B'$.
Then there are constants~$\Cl{nmn1}, \Cl{nmn2}>0$ such that the following holds.
If~$P\in{B}$ and~$q\in{B'}$ are points satisfying
\[
P \notin E\quad\text{and}\quad
\d_{V}(\f(P),q) \geq 2\l_{E}(P) + \Cr{nmn1},
\]
then there exists a point~$Q\in{B}$ satisfying
\[
\f(Q)=q
\quad\text{and}\quad
\d_{W}(P,Q) \geq \d_{V}(\f(P),q) - \l_{E}(P) - \Cr{nmn2}.
\]
\end{proposition}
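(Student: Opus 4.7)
The plan is to carry out a multivariable Newton iteration for the equation $\f(Q)=q$, performed on $W(K)$ via the étale coordinates provided by $\pi$ and $\nu$, with Proposition~\ref{prop:controots} supplying the Hensel-type lifting step through the étale map $\pi$.

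First, I would set up coordinates. Writing $t_{1},\dots,t_{N}$ for the standard coordinates on $\AA^{N}_{K}$, put $\xi_{i}=\pi^{*}(t_{i})$, $\eta_{i}=\nu^{*}(t_{i})$, and $F_{i}=(\nu\circ\f)^{*}(t_{i})\in\Ocal(W)$. Since $\pi$ and $\nu$ are étale, the matrix $DF := (\partial F_{i}/\partial \xi_{j})_{i,j}$ has determinant $J := \det DF$ that generates $\Fit_{0}(\Omega_{W/V})$ up to a unit, and on the standard bounded subsets $B\subset W(K)$ and $B'\subset V(K)$ one has the dictionary
\begin{align*}
\d_{W}(P,P') &= -\log\max_{i}|\xi_{i}(P)-\xi_{i}(P')|+O(1),\\
\d_{V}(q,q') &= -\log\max_{i}|\eta_{i}(q)-\eta_{i}(q')|+O(1),\\
\l_{E}(P) &= -\log|J(P)|+O(1).
\end{align*}
The hypothesis $\d_{V}(\f(P),q)\geq 2\l_{E}(P)+\Cr{nmn1}$ then translates to $\max_{i}|F_{i}(P)-\eta_{i}(q)|\leq c_{0}|J(P)|^{2}$ for a small constant $c_{0}$ that can be made arbitrarily small by enlarging $\Cr{nmn1}$.

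Next, I would build the Newton sequence. Set $P_{0}=P$ and $c=(\eta_{1}(q),\dots,\eta_{N}(q))\in K^{N}$. Given $P_{n}\in W(K)$, define
\[
x_{n+1} = \pi(P_{n}) - DF(P_{n})^{-1}\bigl(F(P_{n})-c\bigr) \in K^{N},
\]
and lift $x_{n+1}$ to a point $P_{n+1}\in W(K)$ with $\pi(P_{n+1})=x_{n+1}$ and $P_{n+1}$ close to $P_{n}$. Since every étale morphism is locally of standard étale form $\Ocal(W)=K[\xi_{1},\dots,\xi_{N}][z]/(g(\xi,z))$ with $\partial g/\partial z$ a unit, this lift reduces to finding a root of the one-variable polynomial $g(x_{n+1},z)$ near the known approximate root $z(P_{n})$; Proposition~\ref{prop:controots} both produces the lift and controls $|z(P_{n+1})-z(P_{n})|$ by a constant multiple of $|x_{n+1}-\pi(P_{n})|$.

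Third, I would verify quadratic convergence and estimate displacement. A first-order Taylor expansion in the $\xi$-coordinates, with remainder controlled on $B$, yields
\[
|F(P_{n+1})-c| \leq c_{1}\,\frac{|F(P_{n})-c|^{2}}{|J(P_{n})|^{2}},
\]
while the ultrametric inequality combined with the smallness of $|\xi(P_{n+1})-\xi(P_{n})|$ (ensured by $\Cr{nmn1}$) gives $|J(P_{n+1})|=|J(P_{n})|$ for every $n$. Consequently $|J(P_{n})|=|J(P)|$ is constant, the quadratic recursion shows that $(P_{n})$ is Cauchy, and $Q=\lim P_{n}\in W(K)$ exists by completeness of $K$. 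Continuity of $\f$ gives $\f(Q)=q$, and the ultrametric bound
\[
|\xi(Q)-\xi(P)| \leq \max_{n}|\xi(P_{n+1})-\xi(P_{n})| \leq c_{2}\,\frac{|F(P)-c|}{|J(P)|}
\]
translates via the dictionary to $\d_{W}(P,Q)\geq \d_{V}(\f(P),q)-\l_{E}(P)-\Cr{nmn2}$. The same estimate makes the total displacement arbitrarily small for large $\Cr{nmn1}$, which keeps $Q$ inside $B$.

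The main obstacle is the lifting step: each Newton correction is defined a priori in $\AA^{N}(K)$ but must be realized as the $\pi$-image of a point of $W(K)$ that stays inside the standard bounded set $B$ and at which $J$ retains the same absolute value. This is why one cannot bypass Proposition~\ref{prop:controots} by a formal implicit function argument, and it is what dictates the bookkeeping needed to cover $B$ by finitely many standard étale charts of $\pi$ and to propagate the constants $c_{0},c_{1},c_{2}$ through the iteration while remaining inside a single chart.
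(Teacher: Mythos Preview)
Your proposal is correct and follows essentially the same Newton-iteration strategy as the paper: the paper simply modularizes the pieces you inline, packaging the \'etale lift through~$\pi$ as a standalone inverse function theorem for \'etale morphisms (Proposition~\ref{prop:invet}, proved exactly as you suggest via reduction to standard \'etale form and Proposition~\ref{prop:controots}), isolating the distance comparison as Corollary~\ref{cor:comparedistet}, and recording the second-order algebraic Taylor estimate as Lemma~\ref{lem-taylor}. One small caution: your ``dictionary'' $\d_{W}(P,P')=-\log\max_{i}|\xi_{i}(P)-\xi_{i}(P')|+O(1)$ is automatic only in the direction $\d_{W}\le\ldots$ on bounded sets, so in your final step you should obtain the lower bound on $\d_{W}(P,Q)$ by tracking $e^{-\d_{W}(P_{n},P_{n+1})}$ directly through the iteration (which your lifting estimate already provides) rather than first bounding $|\xi(Q)-\xi(P)|$ and then invoking the dictionary.
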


We start with some preliminary results.  The following proposition,
which we will apply to \'etale morphism from an affine open subset to
an affine space, enables us to go from affine coordinates back to our
original variety.  It is exactly the inverse function theorem for
\'etale morphisms.

\begin{proposition}\label{prop:invet}
Let~$\bigl(K,|\,\cdot\,|\bigr)$ be a non-archimedean complete field.
Let~$X/K$ and~$Y/K$ be affine varieties, and
let~$\pi\colon{X}\longrightarrow{Y}$ be an \'etale morphism defined
over~$K$.  Let~$B\subset{X}(K)$ be a standard bounded set, and
let~$B'\subset{Y}(K)$ be a bounded set.  Fix arithmetic distance
functions~$\d_{X}$ and~$\d_{Y}$ on~$X$ and~$Y$.  Then there are
constants~$\Cl{invet1},\Cl{invet2}>0$ such that if~$x\in{B}$
and~$y\in{B'}$ satisfying
\[
\d_{Y}(\pi(x), y) \geq \Cr{invet1},
\]
then there exists a unique~$z\in{B}$ satisfying
\[
\pi(z)=y
\quad\text{and}\quad
\d_{X}(x,z)  \geq \d_{Y}(\pi(x),y) -\Cr{invet2}.
\]
\end{proposition}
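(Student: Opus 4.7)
The plan is to bootstrap from Proposition~\ref{prop:controots} by exploiting the local structure of étale morphisms. By the structure theorem for étale morphisms, every étale map is locally standard étale, so I first cover $B$ by finitely many standard bounded subsets $B_i$ contained in affine opens $X_i \subset X$ on which $\pi$ factors as a locally closed embedding $X_i \hookrightarrow \Spec\bigl(\mathcal{O}(Y_i)[t]/(f_i)\bigr)$ followed by the projection to some affine open $Y_i \subset Y$, where $f_i$ is monic in $t$ of some degree $n_i$ and $f_i'$ is a unit on $X_i$. Since the arithmetic distance functions $\d_X, \d_Y$ are comparable on bounded sets to $-\log$ of affine-coordinate sup-norm differences, and using Lemma~\ref{lem:closetobdd}(c), I can arrange that any point sufficiently close to a point of $B_i$ remains in $B_i$.

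Fix $x \in B_i$ with $\pi(x) = y_{0}$, and view $x$ through the étale presentation as $(y_{0}, \alpha)$, with $f_{y_0}(\alpha) = 0$, where $f_{y_0} \in K[t]$ denotes the specialization of $f_i$ at $y_0$. For $y \in B'$ the specialization $f_y$ is a monic polynomial of degree $n_i$, and the Gauss norm $|f_{y_0} - f_y|$ is comparable, on the bounded locus, to $e^{-\d_Y(y_{0}, y)}$. Applying Proposition~\ref{prop:controots} yields some $\beta \in K$ with $f_y(\beta) = 0$ and an estimate of the form
\[
|\alpha - \beta| \cdot |f_{y_0}'(\alpha)|^{n_i - 1} \leq C \cdot |f_{y_0} - f_y|,
\]
provided $|f_{y_0} - f_y|$ is small relative to $|f_{y_0}'(\alpha)|^{n_i}$. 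This is where étaleness pays off: since $f_i'$ is a unit on $X_i$, the value $|f_i'(x)| = |f_{y_0}'(\alpha)|$ is bounded below uniformly for $x \in B_i$, so the smallness hypothesis reduces to a single lower bound on $\d_Y(y_{0}, y)$, and the Jacobian factor $|f_{y_0}'(\alpha)|^{n_i - 1}$ collapses into the additive constant. Setting $z := (y, \beta)$ gives a point of $X(K)$ that lies in $X_i$ (because any denominator defining the distinguished open stays close to a value bounded below) and in $B$ (by Lemma~\ref{lem:closetobdd}(c)), and unwinding the comparison between $\d$ and sup-norm gives $\d_X(x,z) \geq \d_Y(\pi(x), y) - O(1)$, which is the asserted lower bound.

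Uniqueness drops out of the separation inequality. Since $\pi$ is étale, $\Omega_{X/Y} = 0$, so $\Fit_0(\Omega_{X/Y}) = (1)$ and Proposition~\ref{prop:separation} forces $\d_X(z, z') = O(1)$ for any two distinct lifts of the same $y$ lying in a common bounded set. On the other hand, if two such $z, z'$ both satisfied the conclusion of the proposition, the triangle inequality would force $\d_X(z, z')$ to grow with the hypothesis constant, a contradiction once that constant is chosen large enough. The main technical obstacle is the bookkeeping: constructing the cover so that standard bounded sets, étale presentations, and uniform lower bounds on $|f_i'|$ all coexist, and carefully translating between the intrinsic arithmetic distance functions $\d_X, \d_Y$ and the extrinsic coordinate sup-norms used inside Proposition~\ref{prop:controots}.
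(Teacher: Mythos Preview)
Your proposal is correct and follows essentially the same approach as the paper: localize via the structure theorem for \'etale morphisms to a standard \'etale presentation $\bigl(R[t]/(f)\bigr)_g$, apply Proposition~\ref{prop:controots} to the specializations of $f$ using the uniform lower bound on $|f'|$ coming from \'etaleness, check the resulting point lands in the correct distinguished open and in $B$ via Lemma~\ref{lem:closetobdd}(c), and deduce uniqueness from Proposition~\ref{prop:separation} with $\Omega_{X/Y}=0$. The paper carries out exactly this argument, with the additional bookkeeping you flag at the end made explicit (in particular, the construction of the standard bounded cover of $B$ and of matching bounded sets $B_i'\subset Y_i$ so that $y$ is forced into the same chart).
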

\begin{proof}
\textbf{(Proof of Existence):}
\textbf{Step 1.}\enspace
We first localize the problem so that we can use Chevalley's structure theorem of \'etale morphisms.
Let
\[
X = \Spec S
\quad\text{and}\quad
Y = \Spec R.
\]
Let elements~$f_{1},\dots ,f_{r}\in{S}$,  $g_{1},\dots,g_{r},g_{r+1},\dots,g_{s}\in{R}$
be any collections of elements that have the following properties: 
\begin{itemize}
\item $(f_{1},\dots,f_{r})=S$.
\item $(g_{1},\dots ,g_{s}) = R$.
\item $g_{1},\dots , g_{s}$ generate~$R$ as a~$K$-algebra.
\item For each $i=1, \dots , r$, we have $\pi(\Spec S_{f_{i}}) \subset \Spec R_{g_{i}}$, i.e., 
 \[
\xymatrix{
X = \Spec S \ar[r]^{\pi}   & \Spec R = Y\\
\Spec S_{f_{i}} \ar[r] \ar@{^{(}->}[u] & \Spec R_{g_{i}} \ar@{^{(}->}[u].
}
\]
\end{itemize}

For each~$i=1,\ldots,r$ we define a subset~$B_i\subset{B}$ by
\[
B_{i} = \left\{ x \in B  : \  \left|\frac{f_{1}}{f_{i}}(x)\right| \leq 1, \dots, \left|\frac{f_{r}}{f_{i}}(x)\right| \leq 1   \right\}.
\]
Then~$B_{i}$ is a standard bounded subset of~$\Spec S_{f_{i}}$
and~$\bigcup_{i=1}^{r} B_{i}=B$.  Note that for each~$i=1,\ldots,r$,
the set~$\pi(B_{i})$ is a bounded subset of~$\Spec R_{g_{i}}$.

We claim that there is a number~$\Cl{j0}>0$ such that 
\[
B'_{i} := \{ y \in B'  \mid \d_{Y}(\pi(x), y) \geq \Cr{j0} \text{ for some~$x \in B_{i}$}\}
\]
is a bounded subset of~$\Spec R_{g_{i}}$ for~$i=1,\dots, r$.
Indeed, we may assume that~$\d_{Y}$ is 
\begin{align*}
\d_{Y}(y,y') = \log \frac{1}{ \max_{1\leq j \leq s}\{ |g_{j}(y)-g_{j}(y') |\}}
\end{align*}
on~$(\pi(B)\cup B') \times (\pi(B)\cup B')~$.  Since~$1/g_{i}$ is
regular on~$\Spec R_{g_{i}}$ and~$\pi(B_{i})$ is a bounded subset
of~$\Spec R_{g_{i}}$, there is~$\Cl{j1}>0$ such that
\[
|g_{i}(\pi(x))| \geq \Cr{j1} \quad \text{for all~$x \in B_{i}$}.
\]
Thus, for any~$y \in B'$, if~$\Cr{j0}>0$ is large enough
and~$\d_{Y}(\pi(x), y) \geq \Cr{j0}$ for some~$x \in B_{i}$, then we
get~$|g_{i}(y)| \geq \Cr{j1}$. This proves the claim.

Hence to prove existence, we may
replace~$\pi\colon\Spec{S}\longrightarrow\Spec{R}$,~$B$, and~$B'$ with
$\pi\colon\Spec{S}_{f_{i}}\longrightarrow\Spec{R}_{g_{i}}$,~$B_{i}$,
and~$B'_{i}$.

\textbf{Step 2.}\enspace
By Step 1, we may assume that $Y=\Spec{R}$ and~$X=\Spec{S}$, where
\[
S = \Bigl(R[t]/\bigl(f(t)\bigr) \Bigr)_{g(t)}.
\]
Here~$f(t),g(t)\in{R}[t]$ are polynomials with~$f(t)$ a monic, and
\[
f'(t) \in \Bigl(R[t]/\bigl(f(t)\bigr) \Bigr)^{\times}_{g(t)}.
%% \bigl(R[t]/(f(t)) \bigr)_{g(t)}^{\times}.
\]
The situation is summarized in the following diagram:
 \[
\xymatrix@C=90pt{
  \smash[b]{ \Spec\Bigl(R[t]/\bigl(f(t)\bigr) \Bigr)_{g(t)} }
  \ar[d]_{\pi} & *+[l]{B \ni x}  \ar@{|->}[d]_{\pi} \\
  \Spec R & *+[l]{B' \ni y
  \xleftrightarrow[\text{close to one another}]{\text{points that are}} \pi(x)}
}
\]
We write 
\begin{align*}
f(t) &= t^{n} + a_{n-1}t^{n-1} + \cdots + a_{0},\\
g(t) &= b_{m}t^{m} + b_{m-1}t^{m-1} +\cdots + b_{0},
\end{align*}
where~$a_{i},b_{j}\in{R}$.  We write~$f\bigl((a_{i});t\bigr)$
and~$g\bigl((b_{j});t\bigr)$ if we need to specify the coefficients.

We choose~$c_{0},\dots, c_{l} \in R$ so that
\[
R = K[a_{0},\dots, a_{n-1}, b_{0},\dots , b_{m}, c_{0},\dots ,c_{l}].
\]
Then we may take 
\begin{multline*}
  \d_{Y}(y,y')\\
  =\log \min_{i,j,k}\Bigl\{\bigl|a_{i}(y)-a_{i}(y')\bigr|^{-1}, \bigl|b_{j}(y)-b_{j}(y')\bigr|^{-1}, \bigl|c_{k}(y)-c_{k}(y')\bigr|^{-1}\Bigr \}
\end{multline*}
on~$\bigl(\pi(B)\cup{B'}\bigr)\times\bigl(\pi(B)\cup{B'}\bigr)$.

Let~$x \in B$ and~$y \in B'$.  We are going to apply
Proposition~\ref{prop:controots} to the
polynomials~$f\bigl((a_{i}(x));t\bigr)$
and~$f\bigl((a_{i}(y));t\bigr)$, and to the quantity~$t(x)$, which is
a root of~$f\bigl((a_{i}(x));t\bigr)$.
If~$\d_{Y}\bigl(\pi(x),y\bigr)\geq{\Cl{1c}}$, then for
all~$i=0,\dots,n-1$ we get
\begin{equation}
  \label{coeffdist}
  \bigl|a_{i}(\pi(x))- a_{i}(y)\bigr| \leq e^{-\Cr{1c}}.
\end{equation}

Since~$\pi(x)$ and~$y$ move in bounded subsets, there is a number~$D>0$, which is independent of~$x,y$, such that
\[
|a_{i}(\pi(x))| \leq D \quad\text{and}\quad \bigl|a_{i}(y)\bigr| \leq D
\quad\text{for all~$i=0,\dots, n-1$.}
\]
Since~$f'(t)$ and~$1/f'(t)$ are regular functions on~$X$, there is a
$D'>0$, which is independent of~$x$, such that
\[
\frac{1}{D'} \leq \bigl|f'\bigl((a_{i}(x)); t(x)\bigr)\bigr| \leq D'.
\]
Then by Proposition~\ref{prop:controots}, 
if~$\Cr{1c}$ is large enough, there is a number~$\Cl{2c}>0$ such that the following holds.
For all~$x \in B$ and~$y \in B'$ satisfying
\[
\d_{Y}\bigl(\pi(x),y\bigr)\geq \Cr{1c},
\]
there is~$\beta\in{K}$ such that
\begin{equation}
\label{rootbound}
f\bigl((a_{i}(y)\bigr); \beta) = 0
\;\text{and}\;
\bigl|t(x)- \beta\bigr| \leq \Cr{2c} \max_{\leq i \leq n-1}\bigl\{ \bigl|a_{i}(\pi(x)) - a_{i}(y)\bigr|\bigr\}.
\end{equation}
Note that~$(y,\beta)$ defines a~$K$-valued point~$z$ of
$\Spec{R}[t]/\bigl(f(t)\bigr)$.  By~\eqref{rootbound}, the point~$z$
moves in a bounded subset of~$\Spec R[t]/\bigl(f(t)\bigr)$ as~$x$
and~$y$ move.

By the choice of~$\d_{Y}$ and~\eqref{coeffdist} and~\eqref{rootbound},
there is a constant~$\Cl{3c}>0$ independent of~$x$ and~$y$ such that
\begin{align}
\Bigl|g\bigl((b_{j}(\pi(x)));t(x)\bigr) - g( (b_{j}\bigl(\pi(z)));t(z)\bigr)\Bigr| \leq \Cr{3c} e^{-\d_{Y}(\pi(x),y)}  \leq \Cr{3c} e^{-\Cr{1c}}.
\end{align}
Since~$\bigl|g\bigl((b_{j}(\pi(x)));t(x)\bigr)\bigr|^{-1}$ is bounded above when~$x$ runs over~$B$, we see that
\[
\Bigl|g\bigl( (b_{j}(\pi(z)));t(z) \bigr)\Bigr|^{-1}
\]
is also bounded above as~$x$ and~$y$ move.
Hence~$z$ is contained in a bounded subset~$B_{1}$ of~$X = \Spec (R[t]/(f(t)))_{g(t)}$.
On~$(B\cup B_{1}) \times (B\cup B_{1})$, we may take
\[
\d_{X}(\xi, \eta) = \log \min\Bigl\{
e^{\d_{Y}(\pi(\xi),\pi(\eta))},
\bigl|t(\xi)-t(\eta)\bigr|^{-1},
\bigl|(1/g)(\xi) - (1/g)(\eta)  \bigr|^{-1}
\Bigr\}.
\]
Then we get
\begin{align*}
\d_{X}(x,z) &\geq   \log \min\{ e^{\d_{Y}(\pi(x),y)}, \Cr{2c}^{-1}e^{\d_{Y}(\pi(x),y)} , \Cl{bd1/g} e^{\d_{Y}(\pi(x),y)} \}  \\
& \geq \d_{Y}\bigl(\pi(x),y\bigr) - \Cl{4c}\\
&\geq   \Cr{1c} - \Cr{4c}
\end{align*}
where~$\Cr{4c}= \log\max\{1,\Cr{2c}^{-1},\Cr{bd1/g}\}$.
Thus if~$\Cr{1c}$ is large enough, by Lemma~\ref{lem:closetobdd}, we get~$z \in B$.
(Note that we can take~$\Cr{2c}, \Cr{bd1/g}$ independent of~$\Cr{1c}$).
This~$z$ is the point that we want.

\par\noindent\textbf{(Proof of Uniqueness):}\enspace
We apply Proposition~\ref{prop:separation} to the morphism $\pi\colon{X}\longrightarrow{Y}$.
Since~$\Omega_{X/Y} = 0$, there is a  number~$\Cl{3cc}>0$ such that
\[
\d_{X}(z,z') \leq \Cr{3cc}
\]
for all~$z,z' \in B$ such that~$\pi(z)=\pi(z')$ and~$z\neq{z'}$.
Suppose that there are constants~$\Cr{invet1},\Cr{invet2}>0$ and
points~$x,z,z'\in{B}$ and~$y\in{B'}$ satisfying
\begin{parts}
  \Part{\textbullet} 
  $\d_{Y}(\pi(x), y) \geq \Cr{invet1}$;
  \Part{\textbullet}
  $z \neq z'$ and $\pi(z) = \pi(z') = y$;
  \Part{\textbullet}
  $\d_{X}(x,z) + \Cr{invet2} \geq \d_{Y}(\pi(x), y)$ and~$\d_{X}(x,z') + \Cr{invet2} \geq \d_{Y}(\pi(x), y)$.
\end{parts}
Then find that
\begin{align*}
\Cr{invet1} - \Cr{invet2} \leq \d_{Y}(\pi(x), y) - \Cr{invet2} &\leq \min\{\d_{X}(x,z) , \d_{X}(x,z')   \}\\
&\leq \d_{X}(z, z') + \Cl{4cc} \\
& \leq \Cr{3cc} + \Cr{4cc}
\end{align*}
where~$\Cr{4cc}$ comes from the triangle inequality
on~\text{$B\times{B}\times{B}$}.  Thus if~$\Cr{invet1}-\Cr{invet2}$ is
large enough, more precisely if
$\Cr{invet1}>\Cr{invet2}+\Cr{3cc}+\Cr{4cc}$, then we get a
contradiction, which proves that~$z$ is unique.
\end{proof}

We next compare the distance between two points to the distance
between their images under an \'etale morphism.

\begin{corollary}
\label{cor:comparedistet}
Let~$\bigl(K,|\,\cdot\,|\bigr)$ be a non-archimedean complete field.
Let~$X/K$ and~$Y/K$ be affine varieties, and
let~$\pi\colon{X}\longrightarrow{Y}$ be an \'etale morphism defined
over~$K$.  Fix arithmetic distance functions~$\d_{X}$ and~$\d_{Y}$
on~$X$ and~$Y$, respectively.  Let~$B\subset{X}(K)$ be a bounded
subset.  Then there are constants~$\Cl{1d},\Cl{2d}>0$ such that
for all points~$x,y\in B$ satisfying
\[
\d_{X}(x,y) \geq \Cr{1d},
\]
we have
\[
\d_{X}(x,y) +\Cr{2d} \geq  \d_{Y}\bigl(\pi(x),\pi(y)\bigr).
\]
\end{corollary}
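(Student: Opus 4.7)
The plan is to combine the inverse function theorem for \'etale morphisms (Proposition~\ref{prop:invet}) with the separation inequality (Proposition~\ref{prop:separation}), both applied to~$\pi$. The ``contraction'' direction $\d_X(x,y) \leq \d_Y(\pi(x),\pi(y)) + O(1)$ on $B\times B$ is automatic from the subscheme inclusion $\Delta_X \subset (\pi\times\pi)^{-1}\Delta_Y$; the content of the corollary is the reverse direction, which asserts that an \'etale map does not contract distances by more than a bounded factor. This direction uses \'etaleness essentially (it fails, e.g., for $t\mapsto t^{2}$ near $t=0$).

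For the setup, I would first enlarge~$B$ to a standard bounded subset of the affine variety~$X(K)$, and fix a standard bounded $B'\subset Y(K)$ containing~$\pi(B)$. Proposition~\ref{prop:invet} applied to $\pi$, $B$, $B'$ then supplies threshold constants $C_1', C_2'>0$ governing \'etale lifting. Separately, since $\pi$ is \'etale we have $\Omega_{X/Y}=0$, so its annihilator ideal sheaf is all of $\Ocal_X$ and the associated local height vanishes identically; Proposition~\ref{prop:separation}(1) therefore produces a constant~$M$ (depending only on~$B$, via the boundedness of $\lambda_{\partial(X\times X)}$ on $B\times B$) such that $\d_X(y,z)\leq M$ for all distinct $y,z\in B$ with $\pi(y)=\pi(z)$.

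Given $x,y\in B$ with $\d_X(x,y)$ sufficiently large, I would split into cases. If $\d_Y(\pi(x),\pi(y))\leq C_1'$, the desired bound is immediate for $\Cr{2d}\geq C_1'$. Otherwise, Proposition~\ref{prop:invet} applied to the pair $(x,\pi(y))$ produces a (unique) $z\in B$ with $\pi(z)=\pi(y)$ and $\d_X(x,z)\geq \d_Y(\pi(x),\pi(y)) - C_2'$. If $z=y$, this is precisely the desired inequality with $\Cr{2d}=C_2'$. If $z\neq y$, the separation bound forces $\d_X(y,z)\leq M$, and the non-archimedean triangle inequality for $\d_X$ gives $\min(\d_X(x,y),\d_X(x,z))\leq M + O(1)$. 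Either the minimum is $\d_X(x,y)\leq M + O(1)$, which contradicts the hypothesis $\d_X(x,y)\geq \Cr{1d}$ once $\Cr{1d}$ is chosen sufficiently large, or else the minimum is $\d_X(x,z)\leq M + O(1)$, in which case combining with $\d_X(x,z)\geq \d_Y(\pi(x),\pi(y))-C_2'$ yields $\d_Y(\pi(x),\pi(y))\leq M+C_2'+O(1)$, which is bounded by $\d_X(x,y)+\Cr{2d}$ for $\Cr{2d}$ large.

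No step presents a serious technical obstacle; the only care required is the bookkeeping in choosing $\Cr{1d}, \Cr{2d}$ so that all cases yield a bound of the stated form, and the verification that the separation constant~$M$ is indeed uniform in $(y,z)\in B\times B$, which follows from the boundedness of the boundary local height on $B\times B$.
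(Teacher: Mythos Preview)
Your proposal is correct and follows essentially the same approach as the paper: both combine Proposition~\ref{prop:invet} with the separation inequality (Proposition~\ref{prop:separation}) applied to the \'etale map~$\pi$, using that~$\Omega_{X/Y}=0$ makes the separation bound an absolute constant on~$B\times B$. The only organizational difference is that the paper uses the contraction inequality $\d_X\le\d_Y\circ(\pi\times\pi)+O(1)$ up front to guarantee that~$\d_Y(\pi(x),\pi(y))$ exceeds the threshold of Proposition~\ref{prop:invet}, and then chooses~$\Cr{1d}$ large enough to force $z=y$ outright; you instead keep an extra subcase where $z\ne y$ but $\d_Y(\pi(x),\pi(y))$ is bounded, which is harmless.
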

\begin{proof}
We may assume~$B$ is a standard bounded subset.  We apply
Proposition~\ref{prop:invet} to the map~$\pi\colon{X}\to{Y}$ and the
bounded sets~$B$ and~$B'=\pi(B)$, and we let~$\Cr{1d}, \Cr{2d}$ be the
constants appearing in the conclusion of that proposition.

In general, there is a constant~$\Cl{3d}$ such that
\[
\d_{X} \leq \d_{Y} \circ (\pi \times \pi) + \Cr{3d}.
\]
Hence if~$\d_{X}(x,y)\geq\Cl{4d}$ with~$x,y\in{B}$, then we have
\[
\d_{Y}(\pi(x), \pi(y)) \geq \Cr{4d} - \Cr{3d}.
\]
Thus if we take~$\Cr{4d}$ so that~$\Cr{4d}-\Cr{3d} \geq  \Cr{1d}$, then
there is a point~$z\in{B}$ such that
\[
\pi(z) = \pi(y) \quad\text{and}\quad
 \d_{X}(x,z) \geq \d_{Y}(\pi(x), \pi(y)) - \Cr{2d}.
\]
Hence
\[
\Cr{4d} - \Cr{3d} - \Cr{2d} \leq \min\{ \d_{X}(x,y), \d_{X}(x,z) \} \leq \d_{X}(y,z) + \Cl{5d},
\]
where~$\Cr{5d}$ comes from the triangle inequality on~$B\times{B}\times{B}$.

We apply Proposition~\ref{prop:separation} to the
morphism~$\pi\colon{X}\to{Y}$.  Since~$\Omega_{X/Y}=0$, there is a
number~$\Cl{6d}>0$ such that
\[
\d_{X}(w,w') \leq \Cr{6d}
\quad\text{for all $w,w'\in B$ with $\pi(w)=\pi(w')$ and~$w \neq w'$.}
\]
In our situation, if~$y\neq{z}$, then~$\d_{X}(y,z)\leq\Cr{6d}$, so we
find that
\[
\Cr{4d} - \Cr{3d} - \Cr{2d} \leq \Cr{6d} + \Cr{5d}.
\]
Thus if we take~$\Cr{4d}$ sufficiently large, i.e., so that~$\Cr{4d}-\Cr{3d}-\Cr{2d}>\Cr{6d}+\Cr{5d}$, then 
$y=z$ and 
\[
\d_{X}(x,z) \geq \d_{Y}\bigl(\pi(x), \pi(y)\bigr)-\Cr{2d},
\]
which concludes the proof of Corollary~\ref{cor:comparedistet}.
\end{proof}

Our next result is an algebraic version of Taylor's theorem up to
second order terms that is uniform on bounded sets.

\begin{lemma}
\label{lem-taylor}
Let~$\bigl(K,|\,\cdot\,|\bigr)$ be a complete field, let~$X/K$ be an
affine variety, fix an arithmetic distance function~$\d_{X}$ on~$X$,
and let~$B\subset{X}(K)$ be a bounded subset of~$X$.
\begin{parts}
\Part{(1)}
Let~$f \in \Ocal(X)$ be a regular function.
Then there is a number~$C>0$ such that for all~$a, b \in B$, we have
\begin{equation}
  \label{eqn:fafbleedxab}
  \bigl|f(a)-f(b)\bigr| \leq C e^{-\d_{X}(a,b)}.
\end{equation}
\Part{(2)}
Let
\[
\pi \colon X \longrightarrow \AA^{N}_{K}=\Spec K[x_{1},\dots,x_{N}]
\]
be an \'etale morphism.
For any regular function~$f\in\Ocal(X)$, define~$\partial{f}/\partial{x}_{i}$ by
\[
\Omega_{X} \simeq \pi^{*}\Omega_{\AA^{N}} \simeq \bigoplus_{i=1}^{N} \Ocal_{X}d(\pi^{*}x_{i}),
\qquad
df \longleftrightarrow \sum_{i=1}^{N} \frac{ \partial f }{ \partial x_{i}}d(\pi^{*}x_{i}).
\]
Then for any~$f\in\Ocal(X)$, there is a number~$C>0$ such that for
all~$a,b\in{B}$, we have
\begin{equation}
  \label{eqn:2ndTaylor}
  \left|f(a)-f(b)-\sum_{i=1}^{N}\frac{ \partial f }{ \partial x_{i}}(b)(a_{i}-b_{i})\right| \leq C e^{- 2\d_{X}(a,b)},
\end{equation}
where~$\pi(a) = (a_{1},\dots , a_{N})$
and~$\pi(b)=(b_{1},\dots,b_{N})$. N.B. The crucial quantity
in~\eqref{eqn:2ndTaylor} is the~$2$ appearing in the exponent
of~$e^{-2\d_X(a,b)}$.
\end{parts}
\end{lemma}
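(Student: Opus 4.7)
The plan is to prove both parts by identifying the relevant expressions as sections of appropriate powers of the diagonal ideal sheaf~$\Ical_{\D(X)}$ on~$X \times X$, and then invoking the uniform decay bound that any section~$g$ of~$\Ical_{\D(X)}^k$ on an affine open of~$X\times X$ satisfies~$|g|\leq Ce^{-k\d_X}$ on a bounded subset.  This decay bound is itself a consequence of the functoriality of local heights~\cite[Theorem~2.1]{silverman:arithdistfunctions}: viewing a section~$g$ of the ideal of a closed subscheme~$Z$ as a morphism~$X\times X \to \AA^1$, the inclusion~$g^{-1}(0) \supseteq Z$ yields~$-\log|g| = \l_{(0)} \circ g = \l_{g^{-1}(0)} \geq \l_Z + O(1)$ on bounded subsets, and for~$Z = V(\Ical_{\D(X)}^k)$ one has~$\l_Z = k\d_X + O(1)$ on bounded subsets by choosing local generators of~$\Ical_{\D(X)}$ and observing that their~$k$-fold products generate~$\Ical_{\D(X)}^k$.

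For Part~(1) I would simply observe that the function~$(a,b) \mapsto f(a)-f(b) = \pi_1^*f - \pi_2^*f$ on~$X\times X$ vanishes identically on the diagonal and hence lies in~$\Ical_{\D(X)}$, so the~$k=1$ case of the general bound gives~\eqref{eqn:fafbleedxab}.

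For Part~(2), the heart of the matter is showing that
\[
G(a,b) := f(a) - f(b) - \sum_{i=1}^N \frac{\partial f}{\partial x_i}(b)\bigl(\pi^*x_i(a) - \pi^*x_i(b)\bigr)
\]
lies in~$\Ical_{\D(X)}^2$, not merely in~$\Ical_{\D(X)}$, at which point the~$k=2$ case of the general bound will finish the proof.  This is where the \'etale hypothesis enters decisively: because~$\Omega_{X/\AA^N}=0$, the canonical morphism~$\pi^*\Omega_{\AA^N}\to\Omega_X$ is an isomorphism, so~$\Omega_X$ is a free~$\Ocal_X$-module with basis~$\{d(\pi^*x_i)\}_{i=1}^{N}$.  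The partial derivatives~$\partial f/\partial x_i$ are, by definition, the unique coefficients in the expansion~$df = \sum_i (\partial f/\partial x_i)\,d(\pi^*x_i)$ in~$\Omega_X$.  Transporting this identity through the canonical isomorphism~$\Omega_X\cong \Ical_{\D(X)}/\Ical_{\D(X)}^2$ which sends~$df\mapsto f\otimes 1 - 1\otimes f$ yields the congruence
\[
f\otimes 1 - 1\otimes f \equiv \sum_{i=1}^N \bigl(1\otimes \tfrac{\partial f}{\partial x_i}\bigr)\bigl(\pi^*x_i\otimes 1 - 1\otimes \pi^*x_i\bigr) \pmod{\Ical_{\D(X)}^2},
\]
which is exactly the statement that~$G$ is a global section of~$\Ical_{\D(X)}^2$ on the affine scheme~$X\times X$.

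The main subtlety requiring care is the algebraic bookkeeping in the last step: the~$\Ocal_X$-module structure on~$\Ical_{\D(X)}/\Ical_{\D(X)}^2 \cong \Omega_X$ can be realized by multiplication from either the first or the second factor of~$\Ocal_X\otimes_K\Ocal_X$, and one must check that the convention of placing~$\partial f/\partial x_i$ in the second factor is what correctly reproduces the evaluation~$(\partial f/\partial x_i)(b)$ at the point~$(a,b)$.  The two module structures agree modulo~$\Ical_{\D(X)}^2$ (since~$a\otimes 1 - 1\otimes a \in \Ical_{\D(X)}$ for any~$a \in \Ocal_X$), so the displayed congruence is well-defined either way; once this is pinned down, everything else is routine, amounting to choosing local generators of~$\Ical_{\D(X)}$ on the affine~$X\times X$ and converting the ideal-theoretic inclusion into the claimed analytic inequality on the bounded subset~$B\times B$.
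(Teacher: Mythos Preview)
Your proposal is correct and follows essentially the same route as the paper. For Part~(2) your argument is identical to the paper's: both show that the Taylor-remainder element lies in~$\Ical_{\D(X)}^2$ via the conormal identification~$\Omega_X \cong \Ical_{\D(X)}/\Ical_{\D(X)}^2$, and then convert this to the analytic bound~$|F|\le Ce^{-2\d_X}$ on~$B\times B$. For Part~(1) the paper takes a slightly more direct shortcut---it simply adjoins~$f$ to a set of~$K$-algebra generators~$f_1,\dots,f_r$ of~$\Ocal(X)$ and uses the representative~$\d_X(a,b)=\log\min_i|f_i(a)-f_i(b)|^{-1}$ on~$B\times B$, making the inequality a tautology---whereas you invoke the general~$k=1$ case of your decay principle; both are valid and amount to the same underlying observation.
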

\begin{proof}
(1)\enspace
Let~$f=f_{1},f_{2},\dots,f_{r}$ be~$K$-algebra generators of~$\Ocal(X)$.
Then we may take
\[
\d_{X}(a,b) = \log  \min_{1\leq i \leq r} \bigl|f_{i}(a)-f_{i}(b)\bigr|^{-1}
\]
for~$(a,b)\in{B}\times{B}$. Then~\eqref{eqn:fafbleedxab} is a
tautology.
\par\noindent(2)\enspace

Let~$I$ be the ideal of diagonal in~$X\times{X}$.  By definition we
have~$\Omega_{X}=I/I^{2}$, and the differential of a regular
function~$g\in\Ocal(X)$ is given by~$dg=g\otimes1-1\otimes{g}$.
Thus the image of
\begin{align}
  \label{eq:der}
  F = (f \otimes 1 - 1 \otimes f) - \sum_{i=1}^{N} \left(1 \otimes
  \frac{ \partial f }{ \partial x_{i}} \right) (\pi^{*}x_{i} \otimes 1
  - 1 \otimes \pi^{*}x_{i})
\end{align}
in~$ \Omega_{X} = I/I^{2}$ is 
\begin{align*}
  df -
  \smash[t]{  \sum_{i=1}^{N} \frac{ \partial f }{ \partial x_{i}}d(\pi^{*}x_{i}) = 0. }
\end{align*}
This means that~$F\in{I}^{2}$.
Thus we have
\[
\l_{F} + C\geq 2 \l_{I} = 2 \d_{X} \quad \text{on~$B \times B$}
\]
for some constant~$C>0$.
In other words,
\[
\bigl|F(a,b)\bigr| \leq e^{C} e^{-2\d_{X}(a,b)} \quad \text{for~$(a,b) \in B \times B$}.
\]
Since 
\[
F(a,b) = f(a) - f(b) -
\smash[t]{ \sum_{i=1}^{N}  \frac{ \partial f }{ \partial x_{i}} (b) (a_{i}- b_{i}), }
\]
this gives the desired inequality.
\end{proof}

\begin{proof}[Proof of Proposition $\ref{prop:newton-nonarhi}$]
We fix coordinates of the affine space~$\AA^{N}$.
For~$\xi=(\xi_{1},\dots,\xi_{N})\in{K}^{N}$, we
write~$\|\xi\|=\max_{1\leq{i}\leq{N}}|\xi_{i}|$, and similarly~$\|M\|$
denotes the maximum of the absolute values of the entries of the
matrix~$M$.

We choose numbers~$b,b'>0$ so that the
standard bounded subsets
\begin{align*}
  B_{0}  &= \{ \xi \in K^{N} \mid \| \xi\| \leq b \}\subset \AA^{N}(K), \\
  B'_{0} &= \{ \xi \in K^{N} \mid \| \xi\| \leq b' \} \subset \AA^{N}(K),
\end{align*}
satisfy~$\pi(B)\subset{B}_{0}$ and~$\nu(B')\subset{B'}_{0}$.
The following diagram summarizes our setting:
\par
 \[
\xymatrix{
B \ar[d]_{\pi}  \ar@{}[r]|{\subset} \ar@/^{25pt}/[rrr] & W(K) \ar[r]^{\f} \ar[d]_{\pi} 	& V(K) \ar[d]^{\nu} 	& B' \ar[d]^{\nu} \ar@{}[l]|{\supset}\\
B_{0} 	\ar@{}[r]|{\subset}   & \AA^{N}_{K}(K) 	& \AA^{N}_{K}(K) 		& B'_{0} \ar@{}[l]|{\supset}.
}
\]

Since~$\pi$ and~$\nu$ are \'etale morphisms, we have
 \[
\xymatrix{ %% xx
\f^{*}\Omega_{V}	\ar[r]^{D\f}  &   \Omega_{W} \\
\f^{*} \nu^{*} \Omega_{\AA^{N}_{K}} \ar[u]_{\wr} & \pi^{*}\Omega_{\AA^{N}_{K}} \ar[u]_{\wr}\\
\Ocal_{W}^{N} \ar[u]_{\wr} &  \Ocal_{W}^{N} \ar[u]_{\wr}
}
\]
where the vertical arrows are isomorphisms.
Thus~$D\f$ is represented by an~$N \times N$-matrix with entries in~$\Ocal(W)$.
We identify~$D\f$ with this matrix,
and we let
\[
J = \det D\f.
\]
Since~$\f$ is generically \'etale, we know that~$J$ is a non-zero
regular function on~$W$.  Note that~$J$ generates~$
\Fit_{0}(\Omega_{W/V})$, and hence we may take
\[
\l_{E} = \log {|J|^{-1}}.
\]

We take as our arithmetic distance function~$\d_{\AA^{N}}$
on~$\AA^{N}$ the function
\[
\d_{\AA^{N}}(\xi, \eta) = \log \| \xi - \eta\|^{-1}
\quad\text{for~$\xi, \eta \in K^{N}$,}
\]
or equivalently,
\[
e^{-\d_{\AA^{N}}(\xi, \eta)} = \| \xi - \eta \|.
\]
We henceforth use without comment this identification of the usual norm
on~$K^N$ and the arithmetic distance function.  In particular, we have
\[
\bigl\|\pi(x) - \pi(x')\bigr\|  \ll  e^{-\d_{W}(x, x')}
\quad\text{and}\quad
\bigl\| \nu(y) - \nu(y')\bigr \| \ll e^{-\d_{V}(y, y')}.
\]

We are going to work in the following set:
\begin{align*}
  S = \left\{ (x,y) \in (W \times V)(K)  :
  \begin{array}{@{}l@{}}
    x \in B,\; y \in B',\; J(x) \neq 0\\[.5\jot]
    e^{-\d_{V}(\f(x),y)} \leq \Cl{si-1} \bigl|J(x)\bigr|^{2}
  \end{array}
  \right\},
\end{align*}
where~$\Cr{si-1}$ is a positive number that we will take sufficiently
small during the proof so as to ensure the various desired
properties.  In the following, the labeled constants are
positive numbers that depend only on
\begin{align}
\label{dep} W,V, \f, B, B', B_{0}, B'_{0}, \d_{W}, \d_{V}, \l_{E}, \pi, \nu.
\end{align}
In particular, they do not depend on the points chosen on our
varieties.  Sometimes we omit the phrase such as ``there exists a
constant~$C>0$ such that\dots\thinspace.''

Since~$J$ is a regular function on~$W$, we have
\begin{equation}
  \label{Jbound} \bigl|J(x)\bigr| \leq \Cl{Jbd}
  \quad\text{for all~$x \in B$.}
\end{equation}

\begin{claim}[Pullback via~$\pi$]
\label{claim:pullback}
There are constants~$\Cl{si-atw1},\Cl{si-atw2}>0$ such that
if~$x\in{B}$ and~$\zeta\in{B}_{0}$ satisfy
\[
\|\pi(x) - \zeta \| \leq \Cr{si-atw1},
\] 
then there exists a unique~$z\in{B}$ satisfying
\[
\pi(z) = \zeta
\quad\text{and}\quad
e^{-\d_{W}(x,z)} \leq \Cr{si-atw2}\bigl \| \pi(x) - \zeta\bigr\|.
\]
\end{claim}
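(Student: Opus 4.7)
The plan is to invoke Proposition~\ref{prop:invet} directly, applied to the \'etale morphism $\pi\colon W\to \AA^{N}_{K}$ with source bounded set $B\subset W(K)$ and target bounded set $B_{0}\subset \AA^{N}_{K}(K)$. Both hypotheses of that proposition are already in place: $B$ is standard bounded by the assumption of Proposition~\ref{prop:newton-nonarhi}, and $B_{0}$ was chosen at the start of the current proof as a standard (hence affine) bounded subset of $\AA^{N}_{K}$ containing $\pi(B)$. So nothing needs to be set up.

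Next, I would translate the statement of Proposition~\ref{prop:invet} from arithmetic distance into norms on $K^{N}$, using the identification $e^{-\d_{\AA^{N}}(\xi,\eta)} = \|\xi-\eta\|$ fixed earlier in the proof. Under this identification, the hypothesis $\d_{\AA^{N}}(\pi(x),\zeta) \geq \Cr{invet1}$ of Proposition~\ref{prop:invet} becomes the norm bound $\|\pi(x)-\zeta\| \leq e^{-\Cr{invet1}}$, and its conclusion $\d_{W}(x,z) \geq \d_{\AA^{N}}(\pi(x),\zeta) - \Cr{invet2}$ becomes $e^{-\d_{W}(x,z)} \leq e^{\Cr{invet2}}\|\pi(x)-\zeta\|$. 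Setting $\Cr{si-atw1} = e^{-\Cr{invet1}}$ and $\Cr{si-atw2} = e^{\Cr{invet2}}$ then produces, for every $x\in B$ and $\zeta\in B_{0}$ satisfying $\|\pi(x)-\zeta\|\leq \Cr{si-atw1}$, a point $z\in B$ with $\pi(z)=\zeta$ and $e^{-\d_{W}(x,z)} \leq \Cr{si-atw2}\|\pi(x)-\zeta\|$. Uniqueness of $z$ is inherited directly from the uniqueness assertion of Proposition~\ref{prop:invet}.

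I expect no real obstacle here: the claim is essentially a restatement of Proposition~\ref{prop:invet} in the norm language for the particular target $\AA^{N}_{K}$, and the only work is the harmless exponential change of variable built into the conventions of Section~\ref{section:newtonnonarch}. The reason for recording this as a separate claim is bookkeeping, namely so that the Newton iteration to follow can repeatedly pull back candidate points from $\AA^{N}_{K}$ to $W$ along $\pi$ without having to re-derive the non-archimedean \'etale inverse function theorem each time.
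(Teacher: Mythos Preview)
Your proposal is correct and matches the paper's approach exactly: the paper disposes of this claim in a single sentence, noting that it ``is essentially a restatement of Proposition~\ref{prop:invet},'' and your write-up simply fills in the routine translation between $\d_{\AA^{N}}$ and the sup norm that makes this identification precise.
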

We observe that Claim~\ref{claim:pullback} is essentially a
restatement of Proposition~\ref{prop:invet}.

We now state and prove several useful inequalities.
\begin{claim}[Key Inequalities]
\label{claim:keyinequliaties}
There are numbers~$\Cl{si-key1}, \Cl{si-key2}>0$ such that
for all~$(x,y)\in{S}$,  we have\textup:
\begin{align}
  \label{ineq:key1} \bigl\| (D\f)(x)^{-1}\bigl(\nu(\f(x)) - \nu(y)  \bigr) \bigr \|
  &\leq \Cr{si-key1} \frac{1}{|J(x)|} e^{-\d_{V}(\f(x),y)}. \\[3mm]
  \label{ineq:key2} \bigl\| (D\f)(x)^{-1}\bigl(\nu(\f(x)) - \nu(y)  \bigr) \bigr \|
  &\leq \Cr{si-key1} \Cr{si-1}^{1/2} e^{-\d_{V}(\f(x),y)/2}. \\[3mm]
  \label{ineq:key3} \bigl\| (D\f)(x)^{-1}\bigl(\nu(\f(x)) - \nu(y)  \bigr) \bigr \|
  &\leq  \Cr{si-key1} \Cr{si-1} |J(x)| \leq \Cr{si-key2} \Cr{si-1} .
\end{align}
\end{claim}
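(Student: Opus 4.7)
The plan is to prove~\eqref{ineq:key1} directly from the adjugate formula for $(D\f)^{-1}$ combined with the boundedness of regular functions on $B$ and $B'$, and then to deduce~\eqref{ineq:key2} and~\eqref{ineq:key3} as purely algebraic consequences of~\eqref{ineq:key1} and the defining bound $e^{-\d_V(\f(x),y)} \leq \Cr{si-1}|J(x)|^2$ of the set $S$.

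For~\eqref{ineq:key1}, I will use the identity $(D\f)(x)^{-1} = J(x)^{-1}\adj\bigl((D\f)(x)\bigr)$. The entries of $D\f$ are regular functions on $W$ and hence bounded on the bounded set $B$; the entries of $\adj(D\f)$ are polynomials in those entries, so they are bounded on $B$ as well. This produces a constant $C_1>0$ depending only on the data listed in~\eqref{dep} such that the operator norm (with respect to the sup norm on $K^N$) satisfies $\|(D\f)(x)^{-1}\| \leq C_1/|J(x)|$ for every $x\in B$. Applying Lemma~\ref{lem-taylor}(1) to each of the $N$ regular coordinate functions of $\nu\colon V\to\AA^N_K$ on a bounded subset of $V(K)$ containing $\f(B)\cup B'$ gives a second constant $C_2>0$ with $\|\nu(\f(x))-\nu(y)\| \leq C_2\, e^{-\d_V(\f(x),y)}$ for all $(x,y)\in S$. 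Multiplying the two estimates yields~\eqref{ineq:key1} with $\Cr{si-key1}$ at least $C_1C_2$.

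To obtain~\eqref{ineq:key2}, I will rewrite the defining bound of $S$ as $|J(x)|^{-1}\leq \Cr{si-1}^{1/2}e^{\d_V(\f(x),y)/2}$ and substitute into~\eqref{ineq:key1}; the product $|J(x)|^{-1}e^{-\d_V(\f(x),y)}$ is then replaced by $\Cr{si-1}^{1/2}e^{-\d_V(\f(x),y)/2}$, which is exactly the form of~\eqref{ineq:key2}. For~\eqref{ineq:key3}, substituting $e^{-\d_V(\f(x),y)}\leq \Cr{si-1}|J(x)|^2$ into~\eqref{ineq:key1} produces the middle inequality $\leq \Cr{si-key1}\Cr{si-1}|J(x)|$, and the a priori bound~\eqref{Jbound} on $|J(x)|$ then yields the final inequality, with $\Cr{si-key2}$ taken to be (any upper bound of) $\Cr{si-key1}\Cr{Jbd}$.

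No serious obstacle is anticipated: all three statements follow from~\eqref{ineq:key1} by elementary algebra with the bound defining $S$. The only point requiring care is to verify that every constant absorbed into $\Cr{si-key1}$ or $\Cr{si-key2}$ depends only on the fixed data in~\eqref{dep} and not on the particular pair $(x,y)\in S$; this is automatic because the bounds on $\adj(D\f)$, on $D\f$ itself, and on the coordinates of $\nu$ come from the boundedness of regular functions on the fixed bounded sets $B$ and $\f(B)\cup B'$.
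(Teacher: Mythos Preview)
Your proposal is correct and follows essentially the same route as the paper: the adjugate formula combined with boundedness of regular functions on~$B$ gives~\eqref{ineq:key1}, and then~\eqref{ineq:key2} and~\eqref{ineq:key3} are obtained by substituting the defining inequality of~$S$ and the bound~\eqref{Jbound}. The only cosmetic difference is that the paper invokes the already-recorded estimate $\|\nu(y)-\nu(y')\|\ll e^{-\d_V(y,y')}$ on $B'\times B'$ rather than citing Lemma~\ref{lem-taylor}(1) directly, but these amount to the same thing.
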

\begin{proof}[Proof of Claim~$\ref{claim:keyinequliaties}$]
We have
\begin{align*}
  \bigl\| (D\f)(x)^{-1}\bigl(\nu(\f(x)) - \nu(y)  \bigr) \bigr \|
  &\leq \bigl\|(D\f)(x)^{-1}\bigr\|\cdot \bigl\| \nu(\f(x)) - \nu(y)\bigl \|\\
  &\leq \Cr{si-key1} \frac{1}{|J(x)|} e^{-\d_{V}(\f(x),y)}.
\end{align*}
Here the first inequality follows from the triangle inequality.
For the second inequality, write 
\[
(D\f)(x)^{-1} = \frac{1}{J(x)}\adj\bigl((D\f)(x)\bigr)
\]
where~$\adj((D\f)(x))$ is the adjoint matrix.  The entries
of~$\adj((D\f)(x))$ are regular functions on~$W$, and
thus~$\bigl\|\adj((D\f)(x))\bigr\|$ is bounded on~$B$.  This
proves~\eqref{ineq:key1}, and then~\eqref{ineq:key2}
and~\eqref{ineq:key3} follow from~\eqref{ineq:key1}, the inequality
\[
e^{-\d_{V}(\f(x),y)} \leq \Cr{si-1} |J(x)|^{2},
\]
and the fact that~$|J|$ is bounded on~$B$.
\end{proof}

The preceding material allows us to reduce the proof of
Proposition~\ref{prop:newton-nonarhi} to the  following statement:  There are
constants~$\Cr{si-1},\Cl{si-2}>0$ such that for all~$(P,q)\in{S}$, there
exists a point~$Q\in{B}$ satisfying
\[
 \f(Q)=q\quad\text{and}\quad
 e^{-\d_{W}(P,Q)} \leq \Cr{si-2} \frac{1}{\bigl|J(P)\bigr|} e^{-\d_{V}(\f(P),q)}.
\]
So we start with  an arbitrary~$(P,q)\in{S}$, and we will construct the
requisite~$Q$ as the limit of a sequence of
points~$Q_{0},Q_{1},\dots\in{W}(K)$ defined in the following way.
 
We start by choosing~$\Cr{si-1}$ sufficiently small  so that
\[
\Cr{si-key2} \Cr{si-1} \leq \Cr{si-atw1}
\quad\text{and}\quad
\Cr{si-key2} \Cr{si-1}  \leq  b.
\]

\paragraph{\textbf{Algorithm Used to Construct of~$\boldsymbol{Q_{0},Q_{1},\dots}$}}
\begin{parts}
  \Part{(1)}
  Set $Q_{0}=P$.
  \Part{(2)}
  Given~$Q_{0},\dots,Q_{i}$ satisfying~$(Q_{j},q)\in{S}$ for~$j=0,\dots,i$, we consider the quantity
\[
\eta := \pi(Q_{i}) - (D\f)(Q_{i})^{-1}\bigl(\nu\bigl(\f(Q_{i})\bigr) - \nu(q)\bigr) \in \AA^{N}(K) .
\]
Since~$(Q_{i},q)\in{S}$, we know from~\eqref{ineq:key3} that
\begin{align*}
\Bigl\|  (D\f)(Q_{i})^{-1}\bigl(\nu\bigl(\f(Q_{i})\bigr) - \nu(q)\bigr)   \Bigr\| \leq \Cr{si-key2} \Cr{si-1}.
\end{align*}
It follows that
\[
\| \eta \| \leq \max\Bigl\{ \bigl\| \pi(Q_{i})\bigr \| ,  \Cr{si-key2} \Cr{si-1}\Bigr\} \leq b,
\]
so $\eta \in B_{0}$, and also that
\[
  \bigl\| \pi(Q_{i}) - \eta \bigr\| \leq \Cr{si-key2} \Cr{si-1} \leq \Cr{si-atw1}.
\]
Hence  Claim~\ref{claim:pullback} tells us that there is a unique point~$Q_{i+1}\in{B}$ satisfying
%% Define~$Q_{i+1}$ as the unique point of~$B$ satisfying 
\begin{equation}
 \label{ineq:QiQi+1}
 \pi(Q_{i+1}) = \eta\quad\text{and}\quad
 e^{-\d_{W}(Q_{i}, Q_{i+1})} \leq \Cr{si-atw2} \| \pi(Q_{i}) - \eta\|.
\end{equation}
\end{parts}

In order to ensure that we can continue this procedure, and to prove
that~$Q_{i}$ converges to a point~$Q$ having the desired properties,
we verify the following assertions.

\begin{claim}
\label{claim:inductionstep}
Let~$\a$ be an arbitrary real number satisfying $0<\a<1$.
If~$\Cr{si-1}$ is small enough, depending only on~\eqref{dep} and the
choice of~$\a$, then the following are true\textup:
\begin{parts}
  \Part{(1)}
  $(Q_{i+1},q) \in S$, so we can continue the algorithm to create~$Q_{i+2}$.
  \Part{(2)}
 For all~$j = 0, 1, \dots$, we have
 \begin{equation}
  \label{invJ}
  \bigl|J(Q_{j})\bigr| = \bigl|J(P)\bigr|.
\end{equation}
\Part{(3)}
For all~$j = 0, 1, \dots$, we have
\begin{equation}
\label{disfQjq}
e^{-\d_{V}(\f(Q_{j}), q)} \leq \a^{j} e^{-\d_{V}(\f(P),q)}.
\end{equation}
\end{parts}
\end{claim}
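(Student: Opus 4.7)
The plan is to prove (1), (2), (3) by a simultaneous induction on $i$, exploiting the non-archimedean setting at two crucial points. The base case $j=0$ is immediate from $Q_{0}=P$ and $(P,q)\in S$; for the inductive step, assume that $Q_{0},\dots,Q_{i}$ have been constructed and satisfy all three conclusions through index $i$, and establish (2) and (3) at index $i+1$, from which (1) will follow as bookkeeping.

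The first step bounds the one-step motion. From the algorithm's definition of $Q_{i+1}$ via \eqref{ineq:QiQi+1}, together with the Key Inequality \eqref{ineq:key1}, one obtains
\begin{equation*}
e^{-\d_{W}(Q_{i},Q_{i+1})} \leq C_{1}\, \frac{e^{-\d_{V}(\f(Q_{i}),q)}}{|J(Q_{i})|}
\end{equation*}
for a constant $C_{1}$ depending only on the data \eqref{dep}. Applying Lemma \ref{lem-taylor}(1) to the regular function $J$ on $W$, combined with the defining inequality $e^{-\d_{V}(\f(Q_{i}),q)}\leq\Cr{si-1}|J(Q_{i})|^{2}$ of $S$, yields $|J(Q_{i+1})-J(Q_{i})|\leq C_{2}\Cr{si-1}|J(Q_{i})|$. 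If $\Cr{si-1}$ is chosen small enough that $C_{2}\Cr{si-1}<1$, the ultrametric triangle inequality forces $|J(Q_{i+1})|=|J(Q_{i})|$, and the induction gives (2).

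The heart of the argument is (3), which is the classical quadratic convergence of Newton iteration. For each coordinate function $f_{k}=(\nu\circ\f)_{k}$ on $W$, the second-order Taylor bound Lemma \ref{lem-taylor}(2), combined with the fact that the algorithm was engineered precisely so that the linear term $\sum_{j}(\partial f_{k}/\partial x_{j})(Q_{i})\bigl(\pi(Q_{i+1})_{j}-\pi(Q_{i})_{j}\bigr)$ equals $\nu_{k}(q)-f_{k}(Q_{i})$, gives
\begin{equation*}
|f_{k}(Q_{i+1})-\nu_{k}(q)| \leq C_{3}\,e^{-2\d_{W}(Q_{i},Q_{i+1})}.
\end{equation*}
Taking the maximum over $k$ and converting back to a distance on $V$ via the lower bound $e^{-\d_{V}(y,y')}\leq C_{4}\|\nu(y)-\nu(y')\|$ coming from Corollary \ref{cor:comparedistet} applied to $\nu$, one obtains $e^{-\d_{V}(\f(Q_{i+1}),q)}\leq C_{5}e^{-2\d_{W}(Q_{i},Q_{i+1})}$. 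Squaring the one-step bound from the previous paragraph, using (2) to replace $|J(Q_{i})|$ with $|J(P)|$, and applying the $S$-inequality once more to convert a factor $e^{-\d_{V}(\f(Q_{i}),q)}$ into $\Cr{si-1}|J(P)|^{2}$, this collapses to $e^{-\d_{V}(\f(Q_{i+1}),q)}\leq C_{6}\Cr{si-1}\cdot e^{-\d_{V}(\f(Q_{i}),q)}$. Choosing $\Cr{si-1}$ so small that $C_{6}\Cr{si-1}\leq\a$ and invoking the inductive hypothesis completes (3).

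Assertion (1) is then routine: $Q_{i+1}\in B$ is part of Claim \ref{claim:pullback}, $q\in B'$ is fixed, $J(Q_{i+1})\neq 0$ is immediate from (2), and the set-membership inequality $e^{-\d_{V}(\f(Q_{i+1}),q)}\leq\Cr{si-1}|J(Q_{i+1})|^{2}$ follows by combining (3) at $i+1$ with the hypothesis $(P,q)\in S$ and the equality $|J(Q_{i+1})|=|J(P)|$. The main delicate point, and the main obstacle, is that the auxiliary constants $C_{1},\dots,C_{6}$ must all be uniform in $(P,q)$ and in the iteration index, so that a single threshold for $\Cr{si-1}$ can be fixed once $\a$ is chosen; this uniformity rests on the boundedness of $B$ and $B'$ throughout. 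The distinctive non-archimedean inputs are the exact preservation of $|J|$ via the ultrametric inequality in step (2) and the global availability of Claim \ref{claim:pullback} on bounded sets, which obviates the need for small neighborhoods unavailable in the non-locally-compact setting.
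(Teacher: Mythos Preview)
Your approach is essentially the paper's: induction on $i$, a one-step motion bound via \eqref{ineq:QiQi+1} and \eqref{ineq:key1}, preservation of $|J|$ from Lemma~\ref{lem-taylor}(1) and the ultrametric inequality, and quadratic contraction from Lemma~\ref{lem-taylor}(2) together with the cancellation built into the Newton step.

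There is, however, one genuine gap. When you write ``converting back to a distance on $V$ via the lower bound $e^{-\d_{V}(y,y')}\le C_{4}\|\nu(y)-\nu(y')\|$ coming from Corollary~\ref{cor:comparedistet},'' you are invoking that corollary at the pair $\bigl(\f(Q_{i+1}),q\bigr)$ without verifying its threshold hypothesis $\d_{V}\bigl(\f(Q_{i+1}),q\bigr)\ge\Cr{1d}$. This hypothesis is not cosmetic: for an \'etale map $\nu$, two points on different sheets can have nearby images, so the desired inequality can fail globally. At this stage of the induction you know nothing directly about $\f(Q_{i+1})$, only about $Q_{i}$.

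The paper closes this gap by first proving a crude bound $e^{-\d_{V}(\f(Q_{i+1}),q)}\le C\,\Cr{si-1}$, obtained from the triangle inequality on $V$, the easy estimate $e^{-\d_{V}(\f(Q_{i+1}),\f(Q_{i}))}\ll e^{-\d_{W}(Q_{i},Q_{i+1})}$, your one-step motion bound, and $(Q_{i},q)\in S$ together with~\eqref{Jbound}. Only after this preliminary estimate (shrinking $\Cr{si-1}$ if necessary) is Corollary~\ref{cor:comparedistet} applicable. With that step inserted, your argument goes through and coincides with the paper's.
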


We prove Claim~\ref{claim:inductionstep} by induction.  More
precisely, we assume that~\eqref{invJ} and~\eqref{disfQjq} are true
for~$j=0,\dots,i$ and we prove that~$(Q_{i+1},q)\in{S}$ and
that~\eqref{invJ} and~\eqref{disfQjq} are true for~$j=i+1$.  In
this induction step, we may replace~$\Cr{si-1}$ with a smaller value,
but the value of~$\Cr{si-1}$ is always independent of~$i$.

First, we note that
\begin{align*}
&e^{-\d_{V}(\f(Q_{i+1}), q)} \\
& \leq \Cl{si-tri1} \max\{ e^{-\d_{V}(\f(Q_{i+1}), \f(Q_{i}))}, e^{-\d_{V}(\f(Q_{i}), q)} \}   \!  &&   \text{triangle inequality}  \\
& \leq \Cr{si-tri1} \max\{ e^{-\d_{V}(\f(Q_{i+1}), \f(Q_{i}))}, \Cr{si-1} \Cr{Jbd}^{2} \}  &&	\text{by~$(Q_{i},q)\in S$ and~\eqref{Jbound}}	\\
& \leq \Cr{si-tri1} \max\{ \Cl{si-df} e^{-\d_{W}(Q_{i+1},Q_{i})} ,  \Cr{si-1} \Cr{Jbd}^{2} \}  && \text{$\d_{W} \ll \d_{V} \circ (\f \times \f) + O(1)$}	\\
& \leq \Cr{si-tri1} \max\{ \Cr{si-df}\Cr{si-atw2} \| \pi(Q_{i}) - \eta\| ,  \Cr{si-1} \Cr{Jbd}^{2} \}  && \text{by~\eqref{ineq:QiQi+1}}	\\
& \leq \Cr{si-tri1} \max\{ \Cr{si-df}\Cr{si-atw2} \Cr{si-key2} \Cr{si-1},  \Cr{si-1} \Cr{Jbd}^{2} \} && \text{by the construction of~$\eta$}	\\
& = \Cl{1stbdfQi+1q} \Cr{si-1},
\end{align*}
where~$\Cr{1stbdfQi+1q} = \Cr{si-tri1} \max\{ \Cr{si-df}\Cr{si-atw2} \Cr{si-key2} ,   \Cr{Jbd}^{2} \}$.
Therefore, if~$\Cr{si-1}$ is sufficiently small, then we may apply Corollary~\ref{cor:comparedistet} 
to the \'etale morphism~$\nu\colon{V}\to\AA^{N}_{K}$ and the points~$\f(Q_{i+1})$ and~$q$ to obtain
\[
e^{-\d_{V}(\f(Q_{i+1}),q)} \leq \Cl{disqQi+1nu} \bigl\| \nu(\f\bigl(Q_{i+1}\bigr)) - \nu(q) \bigr\|.
\]
Hence 
\begin{align*}
&\hspace{-1em}e^{-\d_{V}(\f(Q_{i+1}),q)} \\
& \leq \Cr{disqQi+1nu} \bigl\| \nu(\f(Q_{i+1})) - \nu(q) \bigr\|\\
& \leq \Cr{disqQi+1nu} \bigl\| \nu(\f(Q_{i+1})) - \nu(\f(Q_{i})) - (D\f)(Q_{i})(\pi(Q_{i+1})- \pi(Q_{i})) \\
&\qquad \qquad  \qquad  +  (D\f)(Q_{i})(\pi(Q_{i+1})- \pi(Q_{i})) + \nu(\f(Q_{i}))    - \nu(q)  \bigr\|\\
& = \Cr{disqQi+1nu} \bigl\| \nu(\f(Q_{i+1})) - \nu(\f(Q_{i})) - (D\f)(Q_{i})(\pi(Q_{i+1})- \pi(Q_{i}))\bigr\|\\
& \leq \Cr{disqQi+1nu} \Cl{si-taylor2} e^{-2 \d_{W}(Q_{i+1},Q_{i})},
\end{align*}
where the third equality follows from the construction of~$Q_{i+1}$
and the last inequality follows from Lemma~\ref{lem-taylor}(2).
By the construction of~$Q_{i+1}$, we get
\begin{align}
 e^{-\d_{V}(\f(Q_{i+1}),q)} 
&\leq \Cr{disqQi+1nu} \Cr{si-taylor2} e^{-2 \d_{W}(Q_{i+1},Q_{i})} \notag\\
& \leq  \Cr{disqQi+1nu} \Cr{si-taylor2}  \Cr{si-atw2}^{2} \bigl\| \pi(Q_{i}) - \eta\bigr\|^{2} \notag\\
 \label{fQi+1qtaylor}
 & = \Cr{disqQi+1nu} \Cr{si-taylor2}  \Cr{si-atw2}^{2} \bigl\|  (D\f)(Q_{i})^{-1}(\nu(\f(Q_{i})) - \nu(q))   \bigr\|^{2}  \\
 \label{fQi+1qJQi2}
 & \leq \Cr{disqQi+1nu} \Cr{si-taylor2}  \Cr{si-atw2}^{2}  \Cr{si-key1}^{2} \Cr{si-1}^{2} |J(Q_{i})|^{2},
\end{align}
where the last inequality follows from the fact~$(Q_{i},q)\in{S}$ and~\eqref{ineq:key3}.

Now note that
\begin{align*}
\bigl|J(Q_{i+1}) - J(Q_{i})\bigr| 
& \leq \Cl{Jdiff} e^{-\d_{W}(Q_{i+1}, Q_{i})}
&&  \text{by Lemma~\ref{lem-taylor}(1)}\\ 
& \leq \Cr{Jdiff} \Cr{si-atw2} \bigl\| \pi(Q_{i}) - \eta\bigr\|
&&  \text{by construction of~$Q_{i+1}$} \\
& = \Cr{Jdiff} \Cr{si-atw2} \bigl\|  (D\f)(Q_{i})^{-1}(\nu(\f(Q_{i})) - \nu(q))   \bigr\| \hidewidth \\
&&& \omit\hfill\text{by definiton of~$\eta$}\\
& \leq  \Cr{Jdiff} \Cr{si-atw2}  \Cr{si-key1} \Cr{si-1} |J(Q_{i})|
&& \text{by~\eqref{ineq:key3}}.
\end{align*}
Thus if~$\Cr{si-1}$ is small enough so that~$\Cr{Jdiff}\Cr{si-atw2}\Cr{si-key1}\Cr{si-1}<1$,
then
\[
|J(Q_{i+1})| = |J(Q_{i})| = |J(P)|.
\]
In particular, this shows that that~$J(Q_{i+1})\neq0$ and
verifies~\eqref{invJ} for~$j=i+1$.

Plugging~$|J(Q_{i+1})| = |J(Q_{i})|$  into~\eqref{fQi+1qJQi2}, we get
\begin{align*}
e^{-\d_{V}(\f(Q_{i+1}),q)} \leq  \Cr{disqQi+1nu} \Cr{si-taylor2}  \Cr{si-atw2}^{2}  \Cr{si-key1}^{2} \Cr{si-1}^{2} |J(Q_{i+1})|^{2}.
\end{align*}
Hence if~$\Cr{si-1}$ is sufficiently small to ensure
that~$\Cr{disqQi+1nu}\Cr{si-taylor2}\Cr{si-atw2}^{2}\Cr{si-key1}^{2}\Cr{si-1}\leq1$,
then we get
\[
e^{-\d_{V}(\f(Q_{i+1}),q)} \leq \Cr{si-1} \bigl|J(Q_{i+1})\bigr|^{2}.
\]
This completes the proof that~$(Q_{i+1},q)\in{S}$.

Finally, we prove~\eqref{disfQjq} for~$j=i+1$.
By~\eqref{fQi+1qtaylor} and~\eqref{ineq:key2}, we have
\begin{align*}
  e^{-\d_{V}(\f(Q_{i+1}), q)}
  & \leq  \Cr{disqQi+1nu} \Cr{si-taylor2}  \Cr{si-atw2}^{2} \Bigl\|  (D\f)(Q_{i})^{-1}\Bigl(\nu\bigl(\f(Q_{i})\bigr) - \nu(q)\Bigr)   \Bigr\|^{2} \\
  & \leq  \Cr{disqQi+1nu} \Cr{si-taylor2}  \Cr{si-atw2}^{2} \Cr{si-key1}^{2} \Cr{si-1} e^{-\d_{V}(\f(Q_{i}),q)}.
\end{align*}
Therefore, it suffices to take~$\Cr{si-1}$ small enough so that it
satisfies
\[
\Cr{disqQi+1nu} \Cr{si-taylor2}  \Cr{si-atw2}^{2} \Cr{si-key1}^{2} \Cr{si-1} \leq  \a.
\]
This completes the proof of Claim~\ref{claim:inductionstep}.

To finish the proof of Proposition~\ref{prop:newton-nonarhi}, we prove
that the sequence $\{Q_{i}\}$ converges and that its limit has the
desired properties.  In order to talk about the limit, we fix closed
immersions of~$W$ and~$V$ into large affine spaces and identify them
with their images.

By construction,~\eqref{ineq:key2}, and~\eqref{disfQjq}, we have
\begin{align*}
  e^{-\d_{W}(Q_{i},Q_{i+1})}
  &\leq \Cr{si-atw2} \Cr{si-key1} \Cr{si-1}^{1/2} e^{-\d_{V}(\f(Q_{i}),q)/2} \\
  &\leq \Cr{si-atw2} \Cr{si-key1} \Cr{si-1}^{1/2} e^{-\d_{V}(\f(P),q)/2} \a^{i/2}.
\end{align*}
The right-hand side goes to~$0$ as~$i \to \infty$.  Hence~$\{Q_{i}\}$
is a Cauchy sequence (since we are in the non-archimedean setting), so
it has a limit~$Q\in{B}$.  By~\eqref{disfQjq}, we have
\[
\f(Q) = q.
\]

Let~$\iota\colon{W}\subset\AA^{n}$ be our chosen embedding,
so we may define~$\d_W$ by
\[
e^{-\d_{W}(x,x')} = \bigl\| \iota(x) - \iota(x') \bigr\|.
\]
Then, again using the fact that our absolute
value~\text{$|\,\cdot\,|$} is non-archi\-me\-dean, we have
\begin{align*}
e^{-\d_{W}(Q_{i},P)} 
&\leq \max\{ e^{-\d_{W}(Q_{i},Q_{i-1})},  e^{-\d_{W}(Q_{i-1},Q_{i-2})}, \dots, e^{-\d_{W}(Q_{1},P)}   \}\\
&\leq \max\left\{  \Cr{si-key1} \frac{e^{-\d_{V}(\f(Q_{i-1}),q)} }{|J(Q_{i-1})|} ,\dots , \Cr{si-key1} \frac{ e^{-\d_{V}(\f(P),q)}}{|J(P)|}  \right\}\\
&\leq \Cr{si-key1} \frac{1}{|J(P)|} e^{-\d_{V}(\f(P),q)},
\end{align*}
where the second inequality follows from~\eqref{ineq:key1}, and the last inequality follows from~\eqref{invJ} and~\eqref{disfQjq}.
Taking the limit as~$i\to\infty$, we get
\begin{align*}
e^{-\d_{W}(Q ,P)} \leq \Cr{si-key1} \frac{1}{|J(P)|} e^{-\d_{V}(\f(P),q)}
\end{align*}
and we are done.
\end{proof}

%%%%%%%%%%%%%%%%%%%%%%%%%%%%%%%%%%%%%%%%%%%%%%%%%%%%%%%%%%%%%%%%%%%%%%
\section{Newton's method (Archimedean case)}
\label{section:newtonarch}
%%%%%%%%%%%%%%%%%%%%%%%%%%%%%%%%%%%%%%%%%%%%%%%%%%%%%%%%%%%%%%%%%%%%%%

In this section we turn to archimedean case and prove the following
result.

\begin{proposition}
\label{prop:newtonmethod-arch}
Let~$\bigl(K,|\,\cdot\,|\bigr)$ be a complete archimedean field, i.e., $K = \RR$ or~$\CC$.
Let~$N>0$ be an integer, let~$b_{2}>b_{1}>0$ be numbers, and define bounded sets
\[
B_{1} = \bigl\{ x \in K^{N} : \|x\| \leq b_{1} \bigr\}\quad\text{and}\quad
B_{2} = \bigl\{ x \in K^{N} : \|x\| \leq b_{2} \bigr\}.
\]
Let~$U \subset K^{N}$ be an open neighborhood of~$B_{2}$, let
\begin{align*}
\f = (\f_{1},\dots, \f_{N}) \colon U \longrightarrow K^{N}
\end{align*}
be an analytic map, and let
\[
D\f = \frac{ \partial (\f_{1},\dots, \f_{N})}{ \partial(x_{1},\dots, x_{N})}
\quad\text{and}\quad
J = \det D\f.
\]
Then there are constants~$\Cl{nmarch-1}, \Cl{nmarch-2}>0$ such that the following holds.
For all~$P\in{B}_{1}$ and~$q\in{K}^{N}$ satisfying
\[
J(P) \neq 0
\quad\text{and}\quad
\bigl\| \f(P) - q \bigr\| \leq \Cr{nmarch-1} \bigl|J(P)\bigr|^{2},
\]
there exists a point~$Q\in{B}_{2}$ satisfying
\[
\f(Q) = q
\quad\text{and}\quad
\|P - Q\| \leq \Cr{nmarch-2} \frac{\bigl\|\f(P) - q\bigr\|}{|J(P)|}.
\]
\end{proposition}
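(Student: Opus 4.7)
The plan is to run classical multivariable Newton iteration starting at $P$, exploiting the hypothesis $\|\f(P)-q\|\leq\Cr{nmarch-1}|J(P)|^{2}$ to obtain quadratic convergence and thus a fixed point $Q\in B_2$ of the iteration. Concretely, I will set $Q_0=P$ and, as long as $Q_n\in B_2$ and $J(Q_n)\neq 0$, define $Q_{n+1}=Q_n-(D\f)(Q_n)^{-1}\bigl(\f(Q_n)-q\bigr)$. Since $\f$ is analytic on a neighborhood of the compact set $B_2$, there will be constants $M_1,M_2,M_3>0$ depending only on $\f$ and $B_2$ such that, for all $a,b\in B_2$, one has the second-order Taylor remainder bound $\|\f(a)-\f(b)-(D\f)(b)(a-b)\|\leq M_1\|a-b\|^2$, the adjugate bound $\|\adj(D\f)(a)\|\leq M_2$, and the Lipschitz bound $|J(a)-J(b)|\leq M_3\|a-b\|$. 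Writing $(D\f)^{-1}=(1/J)\adj(D\f)$, the iteration will then satisfy $\|Q_{n+1}-Q_n\|\leq M_2\|\f(Q_n)-q\|/|J(Q_n)|$, and combining this with Taylor applied at $b=Q_n$, $a=Q_{n+1}$ (so that the linear term vanishes by the very definition of $Q_{n+1}$), I obtain
\[
\|\f(Q_{n+1})-q\|=\|\f(Q_{n+1})-\f(Q_n)-(D\f)(Q_n)(Q_{n+1}-Q_n)\|\leq M_1\|Q_{n+1}-Q_n\|^2,
\]
which is the quadratic self-improvement driving convergence.

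The core of the argument will be an induction on $n$ showing that, for $\Cr{nmarch-1}$ sufficiently small (in terms of $M_1,M_2,M_3$, $b_2-b_1$, and $\sup_{B_1}|J|$), every $Q_n$ lies in $B_2$ and satisfies $|J(Q_n)|\geq\tfrac12|J(P)|$, together with $\|\f(Q_n)-q\|\leq\eta^{\,2^n}\cdot|J(P)|^2/(4M_1M_2^2)$, where $\eta:=(4M_1M_2^2/|J(P)|^2)\|\f(P)-q\|\leq 1/2$. The inductive step for the convergence estimate is exactly the quadratic chaining above, using $|J(Q_n)|\geq\tfrac12|J(P)|$ to convert the $1/|J(Q_n)|$ factor into $2/|J(P)|$. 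To preserve $Q_n\in B_2$ and the lower bound on $|J|$, I will use the telescoping estimate
\[
\|Q_n-P\|\leq\sum_{k=0}^{n-1}\|Q_{k+1}-Q_k\|\leq\frac{2M_2}{|J(P)|}\sum_{k\geq 0}\|\f(Q_k)-q\|\leq\frac{4M_2}{|J(P)|}\|\f(P)-q\|,
\]
where the last step uses the doubly-exponential decay in $n$ to bound the infinite sum by twice its first term. The hypothesis then gives $\|Q_n-P\|\leq 4M_2\Cr{nmarch-1}\sup_{B_1}|J|$, which, for $\Cr{nmarch-1}$ small enough, is less than both $b_2-b_1$ (so $Q_n\in B_2$) and $|J(P)|/(2M_3)$ (so by Lipschitzness $|J(Q_n)|\geq|J(P)|/2$). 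This closes the induction.

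Completeness of $K^N$ together with the decay bound show that $\{Q_n\}$ is Cauchy and hence converges to some $Q\in B_2$; continuity of $\f$ and the decay of $\|\f(Q_n)-q\|$ give $\f(Q)=q$; and passing to the limit in the telescoping inequality above yields $\|P-Q\|\leq (4M_2/|J(P)|)\|\f(P)-q\|$, which is the desired conclusion with $\Cr{nmarch-2}=4M_2$. I expect the main (and essentially only) obstacle to be the bookkeeping: choosing $\Cr{nmarch-1}$ small enough that the smallness conditions needed to keep $Q_n$ in $B_2$, to keep $|J(Q_n)|$ bounded below, and to keep $\eta\leq 1/2$ are all satisfied simultaneously and uniformly for $P\in B_1$. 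This is not deep; the upper bound $\sup_{B_1}|J|<\infty$ is precisely what converts the ``$|J(P)|^{2}$'' scaling in the hypothesis into an absolute smallness constraint on $\Cr{nmarch-1}$.
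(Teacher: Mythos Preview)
Your proposal is correct and follows essentially the same Newton-iteration strategy as the paper: define $Q_{n+1}=Q_n-(D\f)(Q_n)^{-1}(\f(Q_n)-q)$, use the second-order Taylor remainder to get quadratic decay of $\|\f(Q_n)-q\|$, control $|J(Q_n)|$ via a Lipschitz bound, and telescope to get the final distance estimate. The paper's bookkeeping is slightly different (it tracks $\|\f(Q_i)-q\|\le c_i|J(Q_i)|^2$ with $c_i=\eta^{(2-\epsilon)^i}$ for a small $\epsilon>0$, rather than your $\eta^{2^n}|J(P)|^2/(4M_1M_2^2)$ with the fixed lower bound $|J(Q_n)|\ge\tfrac12|J(P)|$), but this is purely cosmetic; your version is arguably tidier since it avoids the auxiliary $\epsilon$.

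One small slip to fix: when you write that $\|Q_n-P\|\le 4M_2\Cr{nmarch-1}\sup_{B_1}|J|$ is less than $|J(P)|/(2M_3)$ for $\Cr{nmarch-1}$ small enough, that cannot hold uniformly in $P$ since $|J(P)|$ may be arbitrarily small. But your own telescoping inequality already gives the sharper bound $\|Q_n-P\|\le 4M_2\|\f(P)-q\|/|J(P)|\le 4M_2\Cr{nmarch-1}|J(P)|$, and requiring $4M_2M_3\Cr{nmarch-1}\le\tfrac12$ then yields $|J(Q_n)|\ge\tfrac12|J(P)|$ uniformly. Use that sharper form for the Jacobian lower bound and reserve the $\sup_{B_1}|J|$ substitution only for the $B_2$-membership check.
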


\begin{proof}
In this proof, the labeled constant, always assumed positive, depend
only on~$N$,~$b_{1}$,~$b_{2}$, and~$\f$. And since~$J$ is a continuous
function, there is~$\Cl{nmarch-J}>0$ such that
\begin{equation}
  \label{eqn:Jxbeded}
\bigl|J(x)\bigr| \leq \Cr{nmarch-J}
\quad\text{for all~$x \in B_{2}$.}
\end{equation}

Let~$B'\subset{K}^{N}$ be a sufficiently large bounded subset so that 
\[
\bigcup_{P\in B_1} \Bigl\{ q\in K^N : \bigl\|\f(P) - q\bigr\| \leq \bigl|J(P)\bigr|^{2} \Bigr\}
\subset B'.
\]
We choose a~$\Cl{nmarch-key}>0$ so that for all~$x\in{B}_{2}$
with~$J(x)\neq0$ and all~$q\in{B'}$, we have
\begin{equation}
  \label{ineq:arch-key}
  \bigl\| (D\f)(x)^{-1}(\f(x)-q) \bigr\| \leq \Cr{nmarch-key} \frac{\bigl\| \f(x) - q \bigr\|}{|J(x)|}.
\end{equation}

We fix a small positive number~$0<\e<1$, and we let~$\eta>0$ be a
small positive number that we will specify later. For~$i=1,2,\dots$, we set
\[
c_{i} = \eta^{(2-\e)^{i}},
\]
and we take~$\eta$ sufficiently small to ensure that
\[
b_{1} + \Cr{nmarch-J} \Cr{nmarch-key} \sum_{j\geq 0} c_{j} \leq b_{2}.
\]

Now let~$P\in{B}_{1}$ and~$q\in{B'}$ be points satisfying
\[
J(P) \neq 0
\quad\text{and}\quad
 \bigl\| \f(P) -q \bigr\| \leq \eta \bigl|J(P)\bigr|^{2}.
\]
We start with
\[
Q_{0} = P.
\]
Suppose that we have constructed~$Q_{0},\dots,Q_{i}\in{K}^{N}$
with~$J(Q_{i})\neq0$.  We then define the next point in the sequence
by
\[
Q_{i+1} = Q_{i} - (D\f)(Q_{i})^{-1}\bigl(\f(Q_{i})-q\bigr).
\]
The following claim will be used to show that this sequence converges
to a point having the desired properties.

\begin{claim}
\label{claim:etasmall}
If~$\eta$ is sufficiently small, then the following are true for all~$i=0,1,\dots$.
\begin{parts}
  \Part{(a)}
  $\| Q_{i} \| \leq  b_{1} + \Cr{nmarch-J} \Cr{nmarch-key} \sum_{j = 0}^{i-1} c_{j}$, so in particular,~$Q_{i} \in B_{2}$.
  \Part{(b)}
  $J(Q_{i}) \neq 0$.
  \Part{(c)}
  $\bigl\| \f(Q_{i}) - q  \bigr\| \leq c_{i} \bigl|J(Q_{i})\bigr|^{2}$.
\end{parts}
\end{claim}
\begin{proof}[Proof of Claim $\ref{claim:etasmall}$]
The proof is by induction on~$i$.  If~$i=0$, everything is true by
assumption.  Suppose that~(a),~(b), and~(c) are true for~$i$.

We first calculate 
\begin{align*}
\| Q_{i+1} \| &\leq \| Q_{i}\| + \bigl\|   (D\f)(Q_{i})^{-1}(\f(Q_{i})-q)\bigr\| \\[0mm]
& \leq  \| Q_{i}\| +  \Cr{nmarch-key} \frac{\bigl\| \f(Q_{i}) - q \bigr\|}{\bigl|J(Q_{i})\bigr|}
&& \text{by~\eqref{ineq:arch-key} and~$Q_{i} \in B_{2}$}\\[0mm]
& \leq   \| Q_{i}\| +  \Cr{nmarch-key} c_{i} \bigl|J(Q_{i})\bigr|
&& \text{by (c) for~$i$}\\[0mm]
& \leq \| Q_{i}\| +  \Cr{nmarch-key}  \Cr{nmarch-J} c_{i}
&& \text{since~$Q_{i} \in B_{2}$}\\[0mm]
& \leq b_{1} + \Cr{nmarch-J} \Cr{nmarch-key} \sum_{j = 0}^{i} c_{j}
&& \text{by (a) for~$i$.}
\end{align*}
This proves (a) for~$i+1$.

Since~$\f$ and~$J$ are analytic, there are
constants~$\Cl{nmarch-tayJ},\Cl{nmarch-tayf}>0$ such that for
all~$x,x'\in{B}_{2}$ we have
\begin{align}
\label{arch-tayJ} & \bigl\| J(x') - J(x) \bigr\| \leq \Cr{nmarch-tayJ} \| x'- x\|, \\
\label{arch-tayf} & \bigl\| \f(x') - \f(x) -(D\f)(x)(x'-x)\bigr\| \leq \Cr{nmarch-tayf}\| x'-x \|^{2}.
\end{align}

Since~$Q_{i+1}, Q_{i} \in B_{2}$, by~\eqref{arch-tayJ} we have
\begin{align}
\label{eqn:JxpJxlec89}  
\bigl|J(Q_{i+1}) -J(Q_{i}) \bigr| &\leq \Cr{nmarch-tayJ} |Q_{i+1} - Q_{i}| \notag\\
&= \Cr{nmarch-tayJ} | D\f(Q_{i})^{-1}(\f(Q_{i})-q) | \notag\\
& \leq \Cr{nmarch-tayJ}  \Cr{nmarch-key} \frac{\bigl\| \f(Q_{i}) - q \bigr\|}{\bigl|J(Q_{i})\bigr|}.
\end{align}
Suppose that~$J(Q_{i+1})=0$.
Then we get 
\begin{align*}
\bigl|J(Q_{i})\bigr|^{2} \leq  \Cr{nmarch-tayJ}  \Cr{nmarch-key} \bigl\| \f(Q_{i}) - q \bigr\| \leq \Cr{nmarch-tayJ}  \Cr{nmarch-key} c_{i} \bigl|J(Q_{i})\bigr|^{2}
\leq \Cr{nmarch-tayJ}  \Cr{nmarch-key} \eta \bigl|J(Q_{i})\bigr|^{2}.
\end{align*}
Since~$J(Q_{i}) \neq 0$ by induction hypothesis, this does not happen if~$\eta$ is small enough so that
$\Cr{nmarch-tayJ}  \Cr{nmarch-key} \eta <1$.
This proves (b) for~$i+1$.

We estimate
\begin{align}
   |J(Q_{i+1})| &\geq \bigl|J(Q_{i})\bigr| \left( 1-
  \Cr{nmarch-tayJ} \Cr{nmarch-key} \frac{\bigl\| \f(Q_{i}) - q
    \bigr\|}{\bigl|J(Q_{i})\bigr|^{2}} \right)
  \quad\text{from \eqref{eqn:JxpJxlec89},} \notag\\[0mm]
  \label{arch-Ji+1lower}  & \geq \bigl|J(Q_{i})\bigr| (1- \Cr{nmarch-tayJ}  \Cr{nmarch-key} c_{i})
  \quad\text{from (c) for $i$.}
\end{align}
Further, since~$Q_{i+1},Q_{i}\in{B}_{2}$, we can use~\eqref{arch-tayf} to deduce that
\begin{equation}
  \label{eqn:fQi1fQiDfetc}
  \bigl\|
  \underbrace{ \f(Q_{i+1}) - \f(Q_{i}) - (D\f)(Q_{i})(Q_{i+1} - Q_{i}) }_{\text{By definition of $Q_{i+1}$, this equals $\f(Q_{i+1}) -q$.}}
  \bigr\| \leq \Cr{nmarch-tayf}\| Q_{i+1} - Q_{i} \|^{2}.
\end{equation}
We can estimate the right-hand side of~\eqref{eqn:fQi1fQiDfetc} as follows.

\begin{align*}
\Cr{nmarch-tayf}\| Q_{i+1} - Q_{i} \|^{2}
& = \Cr{nmarch-tayf} \bigl\| (D\f)(Q_{i})^{-1} (\f(Q_{i})-q) \bigr\|^{2}\\[0mm]
&\leq   \Cr{nmarch-tayf}  \Cr{nmarch-key}^{2} \frac{\bigl\| \f(Q_{i}) - q \bigr\|^{2}}{\bigl|J(Q_{i})\bigr|^{2}} \\[0mm]
& \leq \Cr{nmarch-tayf}  \Cr{nmarch-key}^{2} \frac{c_{i}^{2} \bigl|J(Q_{i})\bigr|^{4} }{  \bigl|J(Q_{i})\bigr|^{2} } && \text{by (c) for~$i$,}\\[0mm]
& \leq \Cr{nmarch-tayf}  \Cr{nmarch-key}^{2} \bigl|J(Q_{i})\bigr|^{2} c_{i}^{2}.
\end{align*}
Thus we get
\begin{align}
\label{ineq:arch-dist}  \bigl\| \f(Q_{i+1}) -q \bigr\| & \leq  \Cr{nmarch-tayf}  \Cr{nmarch-key}^{2} \frac{\bigl\| \f(Q_{i}) - q \bigr\|^{2}}{\bigl|J(Q_{i})\bigr|^{2}}  \\[0mm]
\notag & \leq \Cr{nmarch-tayf}  \Cr{nmarch-key}^{2} \bigl|J(Q_{i})\bigr|^{2} c_{i}^{2}.
\end{align}
By~\eqref{arch-Ji+1lower}, we have
\begin{align*}
 \bigl\| \f(Q_{i+1}) -q \bigr\| & \leq \Cr{nmarch-tayf}  \Cr{nmarch-key}^{2}  \frac{\bigl|J(Q_{i+1})\bigr|^{2}}{(1- \Cr{nmarch-tayJ}  \Cr{nmarch-key} c_{i})^{2}}c_{i}^{2}\\[0mm]
 & = \frac{ \Cr{nmarch-tayf}  \Cr{nmarch-key}^{2}}{(1- \Cr{nmarch-tayJ}  \Cr{nmarch-key} c_{i})^{2}} \eta^{\e(2-\e)^{i}} \bigl|J(Q_{i+1})\bigr|^{2} \eta^{(2-\e)^{i+1}} \\[0mm]
 & = \frac{ \Cr{nmarch-tayf}  \Cr{nmarch-key}^{2}}{(1- \Cr{nmarch-tayJ}  \Cr{nmarch-key} c_{i})^{2}} \eta^{\e(2-\e)^{i}} \bigl|J(Q_{i+1})\bigr|^{2} c_{i+1}  \\[0mm]
 & \leq \frac{ \Cr{nmarch-tayf}  \Cr{nmarch-key}^{2}}{(1- \Cr{nmarch-tayJ}  \Cr{nmarch-key} \eta)^{2}} \eta^{\e} \bigl|J(Q_{i+1})\bigr|^{2} c_{i+1} 
\end{align*}
Thus if we choose~$\eta$ small enough so that it satisfies
\begin{align*}
\frac{ \Cr{nmarch-tayf}  \Cr{nmarch-key}^{2}}{(1- \Cr{nmarch-tayJ}  \Cr{nmarch-key} \eta)^{2}} \eta^{\e} \leq 1,
\end{align*}
then we get
\begin{align*}
 \bigl\| \f(Q_{i+1}) -q \bigr\|  \leq \bigl|J(Q_{i+1})\bigr|^{2} c_{i+1},
\end{align*}
and we are done with the proof of Claim~\ref{claim:etasmall}.
\end{proof}

We can use Claim~\ref{claim:etasmall} to prove that the
sequence~$Q_{i}$ is a Cauchy sequence via the following calculation:
\begin{align*}
  \| Q_n - Q_m \|
  &\le \sum_{i=m}^{n-1} \|Q_{i+1} - Q_i \| \\
  &\le \sum_{i=m}^{n-1} \Bigl( \|Q_{i+1}-q\| + \|Q_i-q\| \Bigr) \\
  &\le \sum_{i=m}^{n-1} \Bigl( c_{i+1}\bigl|J(Q_{i+1})\bigr|^2 + c_{i}\bigl|J(Q_{i})\bigr|^2 \Bigr)
  \quad\text{from Claim~\ref{claim:etasmall}(c),} \\
  &\le \sum_{i=m}^{n-1} \Cr{nmarch-J}^2( c_{i+1}+ c_{i} ) 
  \quad\text{from \eqref{eqn:Jxbeded}, since $Q_i,Q_{i+1}\in B_2$,}  \\
  &\xrightarrow[n,m\to\infty]{} 0
  \quad\text{since $c_i=\eta^{(2-\e)^i}$ and $0<\eta<1$.}
\end{align*}
Hence the limit
\[
Q := \lim_{i \to \infty}Q_{i} \in B_{2}
\]
exists. Further, using the continuity of~$D\f$ and the definition
of~$Q_{i+1}$, we see that
\[
\lim_{i\to\infty} \f(Q_i)-q
= \lim_{i\to\infty} (D\f)(Q_i) (Q_{i+1}-Q_i)
= (D\f)(Q) (Q-Q) = 0,
\]
which proves that
\[
\f(Q) = \lim_{i\to \infty} \f(Q_{i}) = q.
\]

It remains to show that~$Q$ is close to~$P$.
We have
\[
\|Q - P\| = \| Q - Q_{0}\| \leq \sum_{i\geq 0} \Cr{nmarch-key} \frac{\bigl\| \f(Q_{i}) - q \bigr\|}{\bigl|J(Q_{i})\bigr|}.
\]
Using~\eqref{arch-Ji+1lower} and~\eqref{ineq:arch-dist} gives
\begin{align*}
  \frac{\bigl\| \f(Q_{i+1}) - q \bigr\|}{ \bigl|J(Q_{i+1})\bigr| }
  &\leq \Cr{nmarch-tayf}  \Cr{nmarch-key}^{2} \frac{\bigl\| \f(Q_{i}) - q \bigr\|^{2}}{\bigl|J(Q_{i})\bigr|^{2}}
  \frac{1}{ \bigl|J(Q_{i})\bigr| (1- \Cr{nmarch-tayJ}  \Cr{nmarch-key} c_{i})}\\[0mm]
  & \leq \frac{\Cr{nmarch-tayf}  \Cr{nmarch-key}^{2} }{1- \Cr{nmarch-tayJ}  \Cr{nmarch-key} c_{i}} 
  \cdot c_{i} \cdot\frac{\bigl\| \f(Q_{i}) - q \bigr\|}{ \bigl|J(Q_{i})\bigr| } \\[0mm]
  & \leq \frac{\Cr{nmarch-tayf}  \Cr{nmarch-key}^{2} }{1- \Cr{nmarch-tayJ}  \Cr{nmarch-key} \eta}
  \cdot\eta\cdot \frac{\bigl\| \f(Q_{i}) - q \bigr\|}{ \bigl|J(Q_{i})\bigr| }. 
\end{align*}
We take~$\eta$ sufficiently small so that we have
\[
\tau := \frac{\Cr{nmarch-tayf}  \Cr{nmarch-key}^{2} }{1- \Cr{nmarch-tayJ}  \Cr{nmarch-key} \eta} \eta < 1.
\]
Then we get
\[
\frac{\bigl\| \f(Q_{i}) - q \bigr\|}{ \bigl|J(Q_{i})\bigr| } \leq \tau^{i} \frac{\bigl\| \f(P) - q \bigr\|}{ |J(P)| },
\]
and hence
\[
\|Q - P\| \leq \Cr{nmarch-key} \frac{1}{1 - \tau} \frac{\bigl\| \f(P) - q \bigr\|}{ |J(P)| }, 
\]
which completes the proof of Proposition~\ref{prop:newtonmethod-arch}.
\end{proof}

%%%%%%%%%%%%%%%%%%%%%%%%%%%%%%%%%%%%%%%%%%%%%%%%%%%%%%%%%%%%%%%%%%%%%%
%% \appendix
%%%%%%%%%%%%%%%%%%%%%%%%%%%%%%%%%%%%%%%%%%%%%%%%%%%%%%%%%%%%%%%%%%%%%%

%%%%%%%%%%%%%%%%%%%%%%%%%%%%%%%%%%%%%%%%%%%%%%%%%%%%%%%%%%%%%%%%%%%%%%%%
% Acknowldegements and Bibliography
%%%%%%%%%%%%%%%%%%%%%%%%%%%%%%%%%%%%%%%%%%%%%%%%%%%%%%%%%%%%%%%%%%%%%%%%

%% \begin{acknowledgement}
%% The authors would like to thank
%% ***
%% for their helpful advice.
%% \end{acknowledgement}

%% \begin{thebibliography}{99}
%% \itemsep=\smallskipamount
%% \end{thebibliography}

%% \bibliographystyle{plain}
%% \bibliography{/Users/jhs/Dropbox/AAJHS/Book/ADS/ArithDyn}

\end{document}